\newcommand\void[1]       {}
\theoremstyle{definition}
\newtheorem{thm}{Theorem}[section]
\newtheorem{defn}[thm]{Definition}
\newtheorem{prop}[thm]{Proposition}
\newtheorem{cor}[thm]{Corollary}
\newtheorem{rem}[thm]{Remark}
\newtheorem{lem}[thm]{Lemma}
\newtheorem{expl}[thm]{Example}
\numberwithin{equation}{section}
\numberwithin{thm}{section}
\newcommand\nn             {\nonumber \\}
\newcommand\be            {\begin{equation}}
\newcommand\ee            {\end{equation}}
\newcommand\bea           {\begin{eqnarray}}
\newcommand\eea         {\end{eqnarray}}
\newcommand\bnu          {\begin{enumerate}}
\newcommand\enu          {\end{enumerate}}
\newlength{\fighskip} \fighskip=2pt
\newlength{\figvskip} \figvskip=3pt
\newcommand*{\figbox}[2]{{
  \def\figscale{#1}
  \def\arraystretch{0.8}
  \arraycolsep=0pt
  \begin{array}{c}
    \vbox{\vskip\figscale\figvskip
      \hbox{\hskip\figscale\fighskip
        \includegraphics[scale=\figscale]{#2}}}
  \end{array}}}
\newcommand\id            {\mathrm{id}}
\newcommand\op          {\mathrm{op}}
\newcommand\mO          {\mathrm{O}}
\newcommand\bk          {\mathbf{H}}
\newcommand\stra          {\mathrm{stra}}
\newcommand\Disk          {\ED\mathrm{isk}}
\newcommand\rep     {\mathrm{Rep}}
\newcommand\fun     {\mathrm{Fun}}
\newcommand\Fun    {\EuScript{F}\mathrm{un}}
\newcommand\Ind  {\mathrm{Ind}}
\newcommand\Pres  {\mathrm{Pr}}
\newcommand\Or       {\mathrm{or}}
\newcommand\one    {\mathbf{1}}
\newcommand{\rev} {\mathrm{rev}}
\newcommand\cboxtimes {\boxtimes^\circlearrowright}
\newcommand{\pf}{\begin{proof}}
\newcommand{\epf}{\end{proof}}
\newcommand\Cb            {\mathbb{C}}
\newcommand\Rb            {\mathbb{R}}
\newcommand\Zb            {\mathbb{Z}}
\newcommand\EA           {\EuScript{A}}
\newcommand\EB           {\EuScript{B}}
\newcommand\EC           {\EuScript{C}}
\newcommand\ED           {\EuScript{D}}
\newcommand\EE          {\EuScript{E}}
\newcommand\EF          {\EuScript{F}}
\newcommand\EH         {\EuScript{H}}
\newcommand\EI           {\EuScript{I}}
\newcommand\EJ           {\EuScript{J}}
\newcommand\EK         {\EuScript{K}}
\newcommand\EL          {\EuScript{L}}
\newcommand\EM          {\EuScript{M}}
\newcommand\EN         {\EuScript{N}}
\newcommand\EP         {\EuScript{P}}
\newcommand\EQ         {\EuScript{Q}}
\newcommand\ER         {\EuScript{R}}
\newcommand\EV        {\EuScript{V}}
\newcommand\EX         {\EuScript{X}}
\begin{document}

\begin{center} \LARGE
Topological orders and factorization homology
\end{center}

\vskip 1em
\begin{center}
{\large
Yinghua Ai$^{a}$, Liang Kong$^{b,c}$,\,
Hao Zheng$^{d}$\,
~\footnote{Emails:
{\tt  yhai@math.tsinghua.edu.cn, kong.fan.liang@gmail.com, hzheng@math.pku.edu.cn}}}
\\[1em]
$^a$ Department of Mathematics, Tsinghua University \\
Beijing 100084, China
\\[0.5em]
$^b$ Department of Mathematics and Statistics\\
University of New Hampshire, Durham, NH 03824, USA
\\[0.5em]
$^c$ Center of Mathematical Sciences and Applications \\
Harvard University, Cambridge, MA 02138
\\[0.5em]
$^d$ Department of Mathematics, Peking University\\
Beijing 100871, China
\end{center}

\vskip 4em

\begin{abstract}
In the study of 2d (the space dimension) topological orders, it is well-known that bulk excitations are classified by unitary modular tensor categories. But these categories only describe the local observables on an open 2-disk in the long wave length limit. For example, the notion of braiding only makes sense locally. It is natural to ask how to obtain global observables on a closed surface. The answer is provided by the theory of factorization homology. We compute the factorization homology of a closed surface $\Sigma$ with the coefficient given by a unitary modular tensor category, and show that the result is given by a pair $(\bk, u_\Sigma)$, where $\bk$ is the category of finite-dimensional Hilbert spaces and $u_\Sigma\in \bk$ is a distinguished object that coincides precisely with the Hilbert space assigned to the surface $\Sigma$ in Reshetikhin-Turaev TQFT. We also generalize this result 
to a closed stratified surface decorated by anomaly-free topological defects of codimension 0,1,2. This amounts to compute the factorization homology of a stratified surface with a coefficient system satisfying an anomaly-free condition.
\end{abstract}

\tableofcontents

\vspace{1cm}

\section{Introduction}  \label{sec:introduction}

The study of topological orders has attracted a lot of attentions in recent years. In this work, we show how to compute the global observables of an anomaly-free 2d topological order on a closed 2d manifold. Here 2d is the space dimension. We also generalize the computation to closed stratified 2d manifolds decorated by anomaly-free topological defects. 

\medskip
By an anomaly-free 2d topological order, we mean a 2d topological order that can be realized by 2d lattice models \cite{kong-wen}. It is known that anomaly-free 2d topological orders are classified by unitary modular tensor categories (UMTC's) (up to $E_8$ quantum Hall states which will be ignored completely in this work). The objects in the UMTC correspond to topological excitations, which are all particle-like (i.e. 0d) topological defects (also called anyons) and completely local \cite{kitaev,lw-mod,kk}. These topological excitations can be moved (by string operators), fused and braided. These fusion-braiding structures are precisely given by the data and axioms of a UMTC. The trivial 2d topological order is given by the simplest UMTC: the category of finite-dimensional Hilbert spaces, denoted by $\bk$.

However, what has not been clarified nor emphasized in physics literature is that the fusion-braiding structures of topological excitations are only local observables defined in an open 2-disk. For example, the double braiding of two objects $x$ and $y$ in a UMTC, loosely speaking, corresponds to moving the particle-like topological excitation $x$ around another $y$ along a circular path. This double braiding only makes sense locally, for example, in an open 2-disk. If $x$ and $y$ are located on a sphere, a circular path around the topological excitation $y$ is contractible. Therefore, the double braiding does not make sense on a sphere at all. It means that the braiding structure is not a global observable. Then an obvious question is what the global observables are. The answer to this question is provided by the theory of factorization homology \cite{lurie,af}, which allows us to integrate local observables to obtain global observables.

\medskip
The theory of factorization homology originated from Beilinson-Drinfeld's theory of chiral homology \cite{bd} for conformal field theories. It was later generalized to topological field theories by Lurie \cite{lurie} and by Ayala, Francis, Tanaka \cite{af,aft-1,aft-2,afr} to include stratified spaces (with certain tangential structures such as framings). Some general machinery of computing factorization homology of surfaces with coefficient in generic braided tensor categories was developed by Ben-Zvi, Brochier and Jordan in \cite{bbj1,bbj2}. The factorization homology computed in this work does not depend on the framing (see Example\,\ref{expl:disk-alg}). 

Roughly speaking, observables in an open 2-disk form a so-called 2-disk algebra $A$ in a symmetric monoidal higher category (see Def.\,\ref{def:2-disk-alg}). For example, UMTC's are the examples of 2-disk algebras in the symmetric monoidal (2,1)-category of unitary categories 
(see for example \cite{wahl,sw}\cite[Example\,5.1.2.4]{lurie}\cite{fresse}). The theory of factorization homology says that such a 2-disk algebra can be integrated to give global observables on any (closed or open) surface $M$. These global observables, denoted by $\int_M A$, is called the factorization homology of $M$ with coefficient in $A$, and is defined as a colimit \cite{lurie,af,bbj1} (see Def.\,\ref{def:fh-1}). The space of local observables $A$ is referred to as the coefficient system of the factorization homology. Moreover, $\int_M A$ is a 0-disk algebra (see Rem.\,\ref{rem:fh-0disk}). For example, if $A$ is a UMTC $\EC$, the 0-disk algebra $\int_M A$ is given by a pair $(\EE, u)$, where $\EE$ is a unitary category and $u$ is a distinguished object in $\EE$.

Factorization homology satisfies a defining property called $\otimes$-excision property \cite{lurie,af} (see Thm.\,\ref{thm:tensor-excision}), which determines $\int_M A$ up to equivalence \cite[Thm.\,3.24]{af}. It turns out that this $\otimes$-excision property is a special case of a more general property: the pushforward property (see Sec.\,\ref{sec:def-fh}). It allows us to reduce the computation of the factorization homology of a surface to that of a lower dimensional manifold, and eventually to that of a 0-manifold.

Factorization homology can be also defined on stratified $n$-manifolds as certain colimits with a similar $\otimes$-excision property and pushforward property \cite{aft-1,aft-2}.

\medskip

The factorization homology of a surface $\Sigma$ with coefficient in a braided tensor category $\EC$ was first studied by Ben-Zvi, Brochier and Jordan \cite{bbj1}. Besides setting up the machinery of factorization homology in this case, they expressed $\int_\Sigma \EC$ beautifully as the category of modules over a $\EC$-algebra for a generic braided tensor category $\EC$ (see \cite[Thm.\,5.11]{bbj1}). In \cite{bbj2}, they generalized the computation to surfaces with boundaries and marked points. 

By working with arbitrary stratified surfaces but with much more restrictive coefficient systems, we obtain a very simple result (see Thm.\,\ref{thm:high-genus} and Thm.\,\ref{thm:higher-g-defects})

\begin{thm} \label{thm:main}
Let $\Sigma$ be a closed stratified surface with a coefficient system $A$ satisfying an anomaly-free condition (see Def.\,\ref{def:af-cond}). We have
$$
\int_\Sigma A \simeq (\bk,u_\Sigma), \quad\quad\quad \mbox{where $u_\Sigma$ is an object in $\bk$}.
$$
When $\Sigma$ has no 1-stratum and $A$ is determined by a single unitary modular tensor category $\EC$, $u_\Sigma$ is nothing but the Hilbert space that is assigned to the surface $\Sigma$ in Reshetikhin-Turaev 2+1D TQFT determined by $\EC$. 
\end{thm}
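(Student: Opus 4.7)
The strategy is to use the pushforward property of factorization homology---which specializes $\otimes$-excision and is the main computational tool on stratified $n$-manifolds---to reduce the value on $\Sigma$ step by step to a factorization homology of a $0$-manifold. Concretely, I would choose a projection $\pi\colon \Sigma \to N$ onto a stratified $1$-manifold; for a closed oriented genus $g$ surface without defects, a height function whose critical points encode the handles suffices, and for a stratified $\Sigma$ one chooses $\pi$ transverse to the stratification. By the pushforward property, $\int_\Sigma A \simeq \int_N \pi_* A$, where $\pi_* A$ is the coefficient system on $N$ with stalks $\int_{\pi^{-1}(x)} A$. The anomaly-free condition (Def.\,\ref{def:af-cond}) is designed so that each fiberwise factorization homology reduces to $\bk$ equipped with a distinguished object, and so that $\pi_* A$ remains anomaly-free; one then iterates until $N$ is a point, at which stage the answer has the claimed form $(\bk, u_\Sigma)$.

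Concretely, in the unstratified case with coefficient a UMTC $\EC$, the essential local computations are $\int_{S^1} \EC \simeq \FZ(\EC)$ (the categorical Hochschild homology of $\EC$), $\FZ(\EC) \simeq \EC \boxtimes \EC^\rev$ by modularity, and $\EC \boxtimes_{\FZ(\EC)} \EC^\rev \simeq \bk$. Gluing the caps and handles of $\Sigma_g$ via $\otimes$-excision amounts to an iteration of such relative tensor products, and modularity collapses the current value to $\bk$ at each stage while updating the distinguished object. The stratified version proceeds stratum by stratum: every codimension-$1$ or codimension-$2$ defect contributes its own fiber factorization homology, and the anomaly-free condition---formulated precisely to extend the centre/modularity collapse to arbitrary defect data---guarantees an equivalence with $\bk$ at each step. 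The main technical work here is to verify the collapse uniformly across all defect types and to check that the pushforward coefficient system inherits anomaly-freeness; once this is done, the iteration of the pushforward argument is mechanical.

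The most delicate step is the identification of $u_\Sigma$ with the Reshetikhin-Turaev Hilbert space in the unstratified UMTC case. Here I would argue by uniqueness rather than by evaluating the colimit for $\int_\Sigma \EC$ directly: both assignments $\Sigma \mapsto u_\Sigma$ and $\Sigma \mapsto V_{RT}(\Sigma)$ extend to once-extended oriented $2$d TQFTs with the same value $\FZ(\EC) \simeq \EC \boxtimes \EC^\rev$ on the circle, and such a once-extended TQFT is determined by its values on the disk, the annulus and the pair of pants, all of which are accessible directly from the colimit presentation of factorization homology and from the standard modular-functor description of RT. Matching the generators on both sides on these building blocks then forces $u_\Sigma \simeq V_{RT}(\Sigma)$ for every closed $\Sigma$. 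The subtle point is to reconcile the framing-independence of the factorization homology side (cf.\ Example\,\ref{expl:disk-alg}) with the usual framing/signature conventions of the RT construction---a compatibility which is again precisely what the anomaly-free hypothesis is set up to ensure.
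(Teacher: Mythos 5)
Your overall strategy---reduce $\int_\Sigma A$ by pushforward/$\otimes$-excision until nothing is left---is indeed the paper's strategy, but the algebraic content that makes the reduction work is missing or wrong in your sketch. Your chain ``$\int_{S^1}\EC\simeq \FZ(\EC)$, $\FZ(\EC)\simeq\EC\boxtimes\EC^\rev$ by modularity, and $\EC\boxtimes_{\FZ(\EC)}\EC^\rev\simeq\bk$'' fails at the last step: the correct statements are $\EC\boxtimes_{Z(\EC)}\EC^\rev\simeq\fun_\bk(\EC,\EC)$ (Cor.\,\ref{cor:tensor-fun}, which rests on the full faithfulness of the center functor, Thm.\,\ref{thm:kz}), and only then $\EC\boxtimes_{\fun_\bk(\EC,\EC)}\EC\simeq\bk$, which requires knowing that $\EC$ is a dualizable (in fact invertible) bimodule over $\fun_\bk(\EC,\EC)$ with \emph{explicit} duality functors (Thm.\,\ref{thm:dual-mod-cat}, Rem.\,\ref{rem:inv-bimod}); in particular $\EC\boxtimes_{Z(\EC)}\EC^\rev$ is a matrix category with $|\mO(\EC)|^2$ simples, not $\bk$. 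Those explicit duality maps are not a technicality: they are what lets one track the distinguished object, and your argument never computes $u_\Sigma$ at all---``modularity collapses the value to $\bk$ while updating the distinguished object'' is precisely the assertion to be proved. Likewise, the stratified steps you defer as ``mechanical''---that the fusing/contracting/adding moves of Example\,\ref{expl:otimes} preserve the anomaly-free condition (Lem.\,\ref{lem:stra-af}, using Thm.\,\ref{thm:closed} and Eq.\,(\ref{eq:MMNN})), and the cylinder computation $\int_{(S^1\times\Rb;\Rb)}(\EC;\EM;\emptyset)\simeq\fun_\bk(\EX,\EX)$ with the genus-reducing replacement of Rem.\,\ref{rem:red-genus}---are the actual content of the proof, not bookkeeping.

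The identification $u_\Sigma\simeq V_{RT}(\Sigma)$ is also a gap as you set it up. Your uniqueness argument presupposes that $\Sigma\mapsto(\bk,u_\Sigma)$ extends to a once-extended oriented 2d TQFT (a nontrivial functoriality statement not supplied by the colimit definition), and that such a theory is determined by its values on disk, annulus and pair of pants (a cobordism-hypothesis-level claim you would have to substantiate); moreover your matching premise is off: the once-extended Reshetikhin--Turaev theory of $\EC$ assigns $\EC$, not $Z(\EC)\simeq\EC\boxtimes\EC^\rev$, to the circle, while the factorization-homology circle/annulus category is $\fun_\bk(\EC,\EC)\simeq\EC^\op\boxtimes\EC$, so the naive comparison would rather point to the Turaev--Viro-type theory of $Z(\EC)$, whose closed-surface spaces are not $u_\Sigma$. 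The paper needs none of this: in Thm.\,\ref{thm:high-genus} each handle cylinder is replaced by two marked points simultaneously labeled $(\EC\boxtimes\EC,\oplus_{i\in\mO(\EC)}i\boxtimes i^\ast)$ (because $\id_\EC\in\fun_\bk(\EC,\EC)\simeq\EC^\op\boxtimes\EC$ corresponds to $\oplus_i i\boxtimes i$), and induction down to the sphere case (Thm.\,\ref{thm:genus-0}) yields the explicit formula $u_{\Sigma_g}\simeq\hom_\EC(\one_\EC,(\oplus_{i\in\mO(\EC)}i\otimes i^\ast)^{\otimes g})$, which is literally the RT state space---so no uniqueness theorem is invoked. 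Finally, the framing issue you raise is a red herring: framing independence is already provided by the twist on the $2$-disk algebra (Example\,\ref{expl:disk-alg}), and the anomaly-free condition concerns nondegeneracy of the braiding, not the central-charge/signature conventions of RT, which do not affect the vector space assigned to a surface.
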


Physically, the only known global observable on $\Sigma$ for an anomaly-free topological order is the ground state degeneracy (GSD), which played a very important role in 80's in identifying fractional quantum Hall states as new types of phases beyond Landau's paradigm \cite{tw,ntw,nw}. Note that the Hilbert space $u_\Sigma$ in Thm.\,\ref{thm:main} gives exactly the GSD of the associated 2d topological order on $\Sigma$. Moreover, in Sec.\,\ref{sec:dr-fh}, we show that the dimensional reduction of 2d topological orders precisely coincides with the $\otimes$-excision property and the pushforward property of factorization homology. When a closed stratified surface is decorated by anomaly-free topological defects of codimension 0,1,2, the theory of factorization homology gives us a powerful tool to compute the GSD. We give some explicit computations in Sec.\,\ref{sec:fh-comp1} and \ref{sec:fh-comp2} and compare our results with results in physics literature (see for example \cite{hsw,hw,lww}). Importantly, factorization homology also provides a powerful tool to compute global observables on surfaces for anomalous topological orders. We will show that in the future.

\medskip
The layout of this paper is as follows: Sec.\,\ref{sec:umtc},\ref{sec:fh},\ref{sec:fh-comp} consists of pure mathematics while Sec.\,\ref{sec:to} interplays with physics.
In Sec.\,\ref{sec:umtc}, we recall the notion of unitary modular tensor category and a few useful results, and set our notations;
in Sec.\,\ref{sec:fh}, we briefly review the theory of factorization homology;
in Sec.\,\ref{sec:fh-comp}, we introduce the notion of an anomaly-free coefficient system and prove our main results;
in Sec.\,\ref{sec:to}, we explain the relation between factorization homology and topological orders, make some explicit computations and compare results with those in physics literature. 


\medskip
\noindent {\bf Acknowledgement}: We thank John Francis for teaching us a course on factorization homology. We thank David Ben-Zvi for pointing out the references \cite{bbj2,ft,schommer-pries} after the first version of our paper appeared on arXiv and helping us to understand the relation between these works and ours. In particular, Rem.\,\ref{rem:benzvi} is due to him. We also thank Daniel S. Freed for further clarifying this relation. We thank Yong-Shi Wu for clarifying the early history of the study of ground state degeneracy for topological orders. We also thank David Ben-Zvi, Adrien Brochier, John Francis and David Jordan for pointing out important references on 2-disk algebras. LK would like to thank Department of Mathematics at University of Tokyo and Institute of Advanced Study at Tsinghua University for their hospitality. YA is supported by NSFC under Grants No. 11001147 and No. 11471212. LK is supported by the Center of Mathematical Sciences and Applications at Harvard University. HZ is supported by NSFC under Grant No. 11131008.

\section{Elements of unitary modular tensor categories} \label{sec:umtc}

In this section, we recall some basic facts about unitary categories, unitary multi-fusion categories and unitary modular tensor categories, and set our notations.

\subsection{Unitary multi-fusion categories} \label{sec:umfc}



A {\it $\ast$-category} $\EC$ is a $\Cb$-linear category equipped with a functor $\ast: \EC \to \EC^\op$, which acts as the identity map on objects and is antilinear, involutive on morphisms, i.e. $\ast:\hom_\EC(x,y) \to \hom_\EC(y,x)$ is defined so that $(g \circ f)^\ast = f^\ast \circ g^\ast$, $(\lambda f)^\ast = \bar{\lambda} f^\ast$, $f^{\ast\ast} = f$ for $f\in \hom_\EC(x,y)$, $g\in \hom_\EC(y,z)$, $\lambda \in \Cb$. By a {\it functor} $F: \EC\to \ED$ between two $\ast$-categories $\EC$ and $\ED$, we mean a $\Cb$-linear functor such that $F(f^\ast)=F(f)^\ast$ for all morphisms $f$ in $\EC$. 

A $\ast$-category $\EC$ is called {\it unitary} if it has finitely many simple objects up to isomorphism, all hom spaces are finite-dimensional, and the $\ast$-operation is positive (i.e. $f\circ f^\ast =0$ implies $f=0$).  In this case, all hom spaces of $\EC$ are finite-dimensional Hilbert spaces. We denote the set of isomorphism classes of simple objects in such a category $\EC$ by $\mO(\EC)$. An example of unitary category is the category of finite-dimensional Hilbert spaces, denoted by $\bk$.  Other unitary categories are equivalent to finite direct sums of $\bk$.

\medskip
A {\it monoidal category} $\EC$ is a category equipped with a tensor product functor $\otimes: \EC \times \EC \to \EC$, a tensor unit $\one_\EC$ or just $\one$ for simplicity, associativity isomorphisms $\alpha_{a,b,c}: (a\otimes b) \otimes c \to a\otimes (b\otimes c)$ and unit isomorphisms $l_a: \one\otimes a \to a$,  $r_a: a\otimes \one \to a$ for $a,b,c\in \EC$, satisfying natural properties. We regard the opposite category $\EC^\op$ as a monoidal category equipped with the same tensor product $\otimes$. We use $\EC^\rev$ to denote the monoidal category which has the same underlying category $\EC$ but equipped with the reversed tensor product $a\otimes^\rev b := b\otimes a$.

A {\it unitary multi-fusion category} (UMFC) is a rigid unitary monoidal category $\EC$ such that $\alpha_{a,b,c}^\ast=\alpha_{a,b,c}^{-1}$, $l_a^\ast=l_a^{-1}$, $r_a^\ast=r_a^{-1}$. The rigidity of $\EC$ means that for every object $a$ in $\EC$ there is an object $a^\ast$ and duality maps $b_a: \one \to a^\ast\otimes a, \quad d_a: a\otimes a^\ast \to \one$, such that $(d_a\otimes \id_a)(\id_a\otimes b_a)=\id_a$ and $(\id_{a^\ast}\otimes d_a)(b_a\otimes \id_{a^\ast})=\id_{a^\ast}$. In this case, the functor $\delta: a\mapsto a^\ast$ gives a canonical monoidal equivalences between $\EC^\rev$ and $\EC^\op$. A UMFC is called a {\it unitary fusion category} if the tensor unit $\one$ is simple. A UMFC is called {\it indecomposable} if it is not a direct sum of two UMFC's.


\medskip
Let $\EC$ be a UMFC. A {\it left $\EC$-module} $\EM$ is a unitary category equipped with a left $\EC$-action functor $\odot: \EC \times \EM \to \EM$ that is unital and associative. By the rigidity of $\EC$, there is a natural isomorphism $\hom_\EM(a\odot x, y)\simeq\hom_\EM(x,a^\ast \odot y)$ for $a\in \EC$ and $x,y\in \EM$. A $\EC$-{\it module functor} $f: \EM \to \EM'$ between two left $\EC$-modules is a functor intertwining the $\EC$-actions. That is, there is a natural isomorphism $f(a\odot x)\simeq a\odot f(x)$ for $a\in \EC, x\in \EM$, satisfying some natural conditions (see for example \cite{ostrik}). We denote the category of $\EC$-module functors by $\fun_\EC(\EM,\EM')$; it is also a unitary category \cite{eno2002,ghr}. Moreover, $\fun_\EC(\EM,\EM)$ is a UMFC \cite{eno2002,ghr}. A {\it right $\EC$-module} $\EN$ is defined similarly as a left $\EC$-module. In this case, we have $\hom_\EN(x \odot b, y) \simeq \hom_\EN(x, y\odot b^\ast)$ for $b\in \EC$ and $x,y\in \EN$. If $\EM$ is a left (or right) $\EC$-module, the opposite category $\EM^\op$ is automatically a right (or left) $\EC$-module with the $\EC$-action $x\odot^\ast a := a^\ast \odot x$ (or $a\odot^\ast x:=x\odot a^\ast$) for $a\in \EC,x\in \EM$. Let $\EC$ and $\ED$ be two UMFC's. A {\it $\EC$-$\ED$-bimodule} $\EM$ is a left $\EC\boxtimes \ED^\rev$-module, where $\EC\boxtimes \ED^\rev$ is the Deligne tensor product \cite{deligne}. In this case, $\EM^\op$ is naturally a $\ED$-$\EC$-bimodule.

\medskip
For a left module $\EM$ over a UMFC $\EC$, the {\it inner hom} $[x,-]: \EM \to \EC$ for $x\in \EM$ is defined to be the right adjoint functor of $-\odot x: \EC \to \EM$, i.e. $\hom_\EM(a\odot x, y)\simeq \hom_\EC(a, [x,y]), \forall a\in \EC,y\in \EM$. It is known that $[x,y]$ exists for all $x,y\in \EM$ (\cite{ostrik,kong-zheng}).
Moreover, for $a,b\in \EC, x,y\in \EM$, we have
\be \label{eq:internal-hom}
a\otimes [x,y] \otimes b^\ast = [b\odot x, a\odot y].
\ee

A unitary category $\EM$ is automatically a left $\bk$-module and $\fun_\bk(\EM,\EM)$ coincides with the category of functors from $\EM$ to $\EM$. 
For a  UMFC $\EC$, a left $\EC$-module structure on $\EM$ is equivalent to a monoidal functor $\EC \to \fun_\bk(\EM,\EM)$.
\begin{defn} \label{def:closed-module}
A left $\EC$-module $\EM$ is called {\it closed} if the canonical monoidal functor $\EC \to \fun_\bk(\EM, \EM)$ is a monoidal equivalence.
\end{defn}

\subsection{Tensor products of module categories}
Let $\EC,\ED,\EE,\EF$ be UMFC's throughout this subsection.

\medskip
For a right $\EC$-module $\EM$, a left $\EC$-module $\EN$ and a unitary category $\EP$, a {\it balanced $\EC$-module functor} $F: \EM \times \EN \to \EP$ is a functor equipped with isomorphisms
\be \label{eq:balance}
F(x\odot a, y) \simeq F(x, a\odot y), \quad\quad
\mbox{for $x\in \EM, a\in \EC,y\in \EN$}
\ee
that are natural in all three variables and satisfy some natural properties (see \cite{tam,eno2009}). The {\it tensor product} of $\EM$ and $\EN$ over $\EC$ is a unitary category $\EM\boxtimes_\EC\EN$, together with a balanced $\EC$-module functor $\boxtimes_\EC:\EM\times\EN\to\EM\boxtimes_\EC\EN$, such that, for every unitary category $\EP$ and a balanced $\EC$-module functor $F: \EM\times \EN \to \EP$, there is a unique (up to isomorphism) functor $\tilde{F}$ such that the following diagram
$$
\xymatrix{
\EM \times \EN \ar[r]^{\boxtimes_\EC}  \ar[rd]_F & \EM \boxtimes_\EC \EN  \ar[d]^{ \tilde{F} }  \\
& \EP
}
$$
is commutative up to isomorphism (see for example \cite{tam,eno2009}). Such a tensor product always exists. Indeed, there is a canonical equivalence \cite[Cor.\,2.2.5]{kong-zheng}:
\be \label{eq:xRy}
\EM^\op\boxtimes_\EC\EN \xrightarrow{\simeq} \fun_\EC(\EM,\EN) \quad\quad
\text{defined by} \quad\quad x\boxtimes_\EC y\mapsto 
[-,x]^\ast\odot y,
\ee
where $x\boxtimes_\EC y$ is the image of $(x,y)$ under the functor $\boxtimes_\EC$. When $\EC=\bk$, $\EM\boxtimes_\bk\EN$ is just the Deligne tensor product $\EM\boxtimes \EN$.

\medskip

If $\EM$ is a $\EC$-$\ED$-bimodule and $\EN$ is a $\ED$-$\EE$-bimodule, then $\EM\boxtimes_\ED \EN$ is a $\EC$-$\EE$-bimodule. If $\EP$ is a $\EE$-$\EF$-bimodule, we have the associativity equivalence $(\EM\boxtimes_\ED\EN)\boxtimes_\EE\EP \simeq \EM \boxtimes_\ED (\EN\boxtimes_\EE \EP)$ defined by $(m\boxtimes_\ED n) \boxtimes_\EE p \mapsto m\boxtimes_\ED (n\boxtimes_\EE p)$ (see for example \cite{tam,eno2009}).
Therefore, we simply denote both categories by $\EM\boxtimes_\ED \EN\boxtimes_\EE \EP$. Moreover, we also have canonical equivalences $\EC\boxtimes_\EC \EM \simeq \EM \simeq \EM\boxtimes_\ED \ED$ defined by $c\boxtimes_\EC m \mapsto c\odot m \mapsto (c\odot m)\boxtimes_\ED \one_\ED$ for $c\in \EC, m\in \EM$.

\begin{rem} \label{rem:tensor-bimod}
If $\EM$ is a $\EC$-$\ED$-bimodule and $\EN$ is a $\EC$-$\EE$-bimodule, then the canonical equivalence $\EM^\op \boxtimes_\EC \EN \xrightarrow{\simeq} \fun_\EC(\EM, \EN)$ defined by Eq.\,(\ref{eq:xRy}), is an equivalence between $\ED$-$\EE$-bimodules. Indeed, we have
$(d\odot^\ast x)\boxtimes_\EC (y\odot e) = (x\odot d^\ast) \boxtimes_\EC (y\odot e) \mapsto [-, x\odot d^\ast]^\ast \odot (y\odot e) \simeq ([-\odot d, x]^\ast \odot y)\odot e$ for $d\in\ED$, $e\in\EE$, $x\in\EM$, $y\in\EN$.
\end{rem}

Let $\EM$ be a $\EC$-$\ED$-bimodule and $\EN$ a $\ED$-$\EC$-bimodule. We have a tensor product $\EM\boxtimes_{\EC^\rev\boxtimes \ED} \EN$. In order to emphasize its cyclic nature, we would also like to introduce another convenient notation:
$$
\cboxtimes_\EC (\EM\boxtimes_\ED \EN):=\EM\boxtimes_{\EC^\rev \otimes \ED} \EN.
$$
This cyclic tensor product $\cboxtimes_\EC$ was carefully studied under the name of categorical-valued trace in \cite{fss}. Let $\EC_0, \cdots, \EC_n$ be UMFC's and $\EC_0=\EC_n$, and let $\EM_i$ be $\EC_{i-1}$-$\EC_i$-bimodule for $i=1, \cdots, n$. We would like to introduce a symmetric notation:
$$
\cboxtimes_{(\EC_0, \cdots, \EC_{n-1})} (\EM_1, \cdots, \EM_n) := \cboxtimes_{\EC_0} (\EM_1\boxtimes_{\EC_1} (\EM_2 \boxtimes_{\EC_2} \cdots \boxtimes_{\EC_{n-1}} \EM_n)).
$$


\subsection{Unitary modular tensor categories}

A {\it braided monoidal category} $\EC$ is a monoidal category equipped with a braiding $a\otimes b \xrightarrow{c_{a,b}} b\otimes a$ for all $a,b\in \EC$. We use $\overline{\EC}$ to denote the same monoidal category but equipped with a new braiding structure given by the anti-braidings in $\EC$.
The {\it M\"{u}ger center} of $\EC$, denoted by $\EC'$, is the full subcategory of $\EC$ consisting of those objects $x\in \EC$ such that $c_{y,x}\circ c_{x,y}=\id_{x\otimes y}$ for all $y\in \EC$. 

A {\it unitary braided fusion category} $\EC$ is a unitary fusion category with a braiding structure such that $c_{a,b}^\ast=c_{a,b}^{-1}$ for $a,b\in \EC$. A unitary braided fusion category has a canonical pivotal structure, which is equivalent to the structure of a twist $\theta_a: a \xrightarrow{\simeq} a, \forall a\in \EC$ such that $\theta_\one=\id_\one$ and $\theta_{a\otimes b} = c_{b,a}\circ c_{a,b} \circ (\theta_a \otimes \theta_b)$. Moreover, this pivotal structure is spherical, i.e. $\theta_{a^\ast}=(\theta_a)^\ast$ for all $a\in \EC$ \cite{kitaev}. A {\it unitary modular tensor category} (UMTC) is a unitary braided fusion category equipped with the canonical spherical structure such that it satisfies the non-degeneracy condition: $\EC' \simeq \bk$ \cite{rt,turaev}.

\medskip
Given a monoidal category $\EC$, there is a canonical braided monoidal category $Z(\EC)$ associated to $\EC$, called the {\it Drinfeld center} of $\EC$. Its objects are pairs $(z, c_{z,-})$, where $c_{z,-}: z\otimes - \to -\otimes z$ (called a {\it half-braiding}) is a natural isomorphism satisfying some natural conditions. It is clear that $Z(\EC^\rev)$ can be identified with $\overline{Z(\EC)}$ as braided monoidal categories. If $\EC$ is a UMFC, the Drinfeld center $Z(\EC)$ is a UMTC \cite{mueger2}. When $\EC$ is a UMTC, we have $Z(\EC) \simeq \EC \boxtimes \overline{\EC}$ as UMTC's \cite{mueger2}. 

\medskip

We recall the notions of a multi-fusion bimodule introduced in \cite{kong-zheng}. Similar notions in much more general settings were introduced earlier in \cite{lurie,francis,ginot,bbj2}.

\begin{defn}  \label{def:monoidal-module}
Let $\EC,\ED$ be UMTC's. A {\em multi-fusion $\EC$-$\ED$-bimodule} is a unitary multi-fusion category $\EK$ equipped with a braided monoidal functor $\phi_\EK:\overline{\EC}\boxtimes \ED\to Z(\EK)$.
A multi-fusion $\EC$-$\ED$-bimodule $\EK$ is said to be {\it closed} if $\phi_\EK$ is an equivalence. If a closed multi-fusion $\EC$-$\ED$-bimodule exists, then the UMTC's $\EC$ and $\ED$ are said to be {\it Witt equivalent} (\cite{dmno}).
We say that two multi-fusion $\EC$-$\ED$-bimodules $\EK$ and $\EL$ are {\it equivalent} if there is a monoidal equivalence $\EK\simeq\EL$ such that the composition of $\phi_\EK$ with the induced equivalence $Z(\EK)\simeq Z(\EL)$ is isomorphic to $\phi_\EL$.
\end{defn}

\void{

We recall the notions of multi-fusion left/right/bi-modules introduced in \cite{kong-zheng}. Similar notions in much more general settings were introduced earlier in \cite{lurie,francis,ginot,bbj2}.
\begin{defn}  \label{def:monoidal-module}
Let $\EC,\ED$ be UMTC's.
\bnu
\item A {\em multi-fusion left $\EC$-module} $\EM$ is a unitary multi-fusion category $\EM$ equipped with a braided monoidal functor $\phi_\EM:\EC\to \overline{Z(\EM)}$;
\item A {\em multi-fusion right $\ED$-module} $\EN$ is a unitary multi-fusion category $\EN$ equipped with a braided monoidal functor $\phi_\EN:\ED\to Z(\EN)$.
\item A {\em multi-fusion $\EC$-$\ED$-bimodule} $\EK$ is a unitary multi-fusion category $\EK$ equipped with a braided monoidal functor $\phi_\EK:\overline{\EC}\boxtimes \ED\to Z(\EK)$.
\enu
A multi-fusion $\EC$-$\ED$-bimodule is said to be {\it closed} if $\phi_\EK$ is an equivalence. If a closed multi-fusion $\EC$-$\ED$-bimodule exists, then the UMTC's $\EC$ and $\ED$ are said to be {\it Witt equivalent} (\cite{dmno}).
\end{defn}

\begin{rem}
If $\EK$ is a multi-fusion $\EC$-$\ED$-bimodule, then $\EK^\rev$ is clearly a multi-fusion $\ED$-$\EC$-bimodule. Moreover, if $\EK$ is closed, so is $\EK^\rev$.
\end{rem}

Let $\EM$ be a right multi-fusion $\ED$-module. The braided monoidal functor $\phi_\EM: \ED \to Z(\EM)$ can be replaced by a monoidal functor $f_\EM: \ED \to \EM$ with a half-braiding $c_{a,-}: f_\EM(a)\otimes - \xrightarrow{} -\otimes f_\EM(a)$ for each $a\in \ED$ such that $c_{a\otimes b,y}=(c_{a,y}\otimes \id_b)(\id_a\otimes c_{b,y}), \forall y\in \EM$ and the following diagram
$$
\xymatrix{
f_\EM(a) \otimes f_\EM(b) \ar[r]^\sim \ar[d]_{c_{a,f_\EM(b)}} & f_\EM(a\otimes b) \ar[d]^{f_\EM(c_{a,b})} \\
f_\EM(b) \otimes f_\EM(a) \ar[r]^\sim & f_\EM(b\otimes a)\,
}
$$
is commutative. Similarly, for a multi-fusion $\EC$-$\ED$-bimodule $\EM$, the defining data $\phi_\EM: \overline{\EC} \boxtimes \ED \to Z(\EM)$ can be replaced by two monoidal functors $\EC \xrightarrow{L} \EM \xleftarrow{R} \ED$ satisfying natural conditions.

\begin{defn} \label{def:monoidal-module-map}
Let $\EC$ be a UMFC and $\EM,\EN$ right multi-fusion $\EC$-modules. A {\em monoidal $\EC$-module functor $F:\EM\to\EN$} is a monoidal functor equipped with an isomorphism of monoidal functors $F\circ f_\EM\simeq f_\EN: \EC\to\EN$ such that the evident diagram
\be  \label{diag:m-m-map}
\xymatrix{
  F(f_\EM(a)\otimes x) \ar[r]^\sim \ar[d]_\sim & F(x\otimes f_\EM(a)) \ar[d]^\sim \\
  f_\EN(a)\otimes F(x) \ar[r]^\sim & F(x)\otimes f_\EN(a) \\
}
\ee
is commutative for $a\in\EC$ and $x\in\EM$. Two right multi-fusion $\EC$-modules are said to be equivalent if there is an invertible monoidal $\EC$-module functor $F:\EM\to\EN$.
\end{defn}

}

Let $\EC,\ED,\EE$ be UMTC's. Let $\EM$ be a multi-fusion $\EC$-$\ED$-bimodule and $\EN$ a multi-fusion $\ED$-$\EE$-bimodule. The category $\EM\boxtimes_\ED \EN$ has a natural structure of a UMFC with the monoidal structure defined by $(a\boxtimes_\ED b) \otimes (c\boxtimes_\ED d) \simeq (a\otimes c)\boxtimes_\ED(b\otimes d)$. Moreover, $\EM\boxtimes_\ED\EN$ satisfies the usual universal property. More precisely, if $\EK$ is another UMFC and there is a monoidal functor $F: \EM\boxtimes \EN \to \EK$ such that $F$ is also a balanced $\ED$-module functor, then there is a unique monoidal functor $\tilde{F}: \EM\boxtimes_\ED \EN \to \EK$ up to isomorphism such that $F\simeq\tilde{F} \circ \boxtimes_\ED$.

\begin{thm}[\cite{kong-zheng} Thm.\,3.3.6] \label{thm:closed}
Let $\EC,\ED,\EE$ be UMTC's. If $\EM$ is a closed multi-fusion $\EC$-$\ED$-bimodule, and $\EN$ is a closed multi-fusion $\ED$-$\EE$-bimodule, then $\EM\boxtimes_\ED \EN$ is a closed multi-fusion $\EC$-$\EE$-bimodule.
\end{thm}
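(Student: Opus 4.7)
The UMFC structure on $\EM\boxtimes_\ED\EN$ is already in place from the preceding paragraph, so the task reduces to constructing a braided monoidal equivalence $\phi\colon\overline{\EC}\boxtimes\EE\to Z(\EM\boxtimes_\ED\EN)$. The plan is to do this in two stages: first build $\phi$ by transporting the central structures on $\EM$ and $\EN$ along the canonical monoidal functors $\iota_\EM\colon\EM\to\EM\boxtimes_\ED\EN$ and $\iota_\EN\colon\EN\to\EM\boxtimes_\ED\EN$ (sending $m\mapsto m\boxtimes_\ED\one_\EN$ and $n\mapsto\one_\EM\boxtimes_\ED n$); then verify $\phi$ is an equivalence by a Frobenius--Perron dimension count together with a fully-faithfulness argument.

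For the construction, I would unpack $\phi_\EM$ as two monoidal functors $\overline{\EC}\to\EM\leftarrow\ED$ with mutually compatible half-braidings, and likewise for $\phi_\EN$. Composing with $\iota_\EM,\iota_\EN$ yields monoidal functors $\overline{\EC}\to\EM\boxtimes_\ED\EN\leftarrow\EE$. For $c\in\overline{\EC}$, the half-braiding on $\iota_\EM(c)$ is defined componentwise on $m\boxtimes_\ED n$ by tensoring the original half-braiding $c\otimes m\to m\otimes c$ with $\id_n$, and is compatible with the balanced $\ED$-structure because the half-braiding of $\phi_\EM(c)$ in $Z(\EM)$ is in particular natural in the right $\ED$-action on $\EM$. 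A symmetric construction applies on the $\EE$-side. The images of $\overline{\EC}$ and $\EE$ centralize each other via the canonical identification $(c\boxtimes_\ED\one_\EN)\otimes(\one_\EM\boxtimes_\ED e)\simeq c\boxtimes_\ED e\simeq(\one_\EM\boxtimes_\ED e)\otimes(c\boxtimes_\ED\one_\EN)$, producing the trivial mutual braiding required to descend to a braided monoidal functor out of $\overline{\EC}\boxtimes\EE$.

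To prove $\phi$ is an equivalence I would argue by Frobenius--Perron dimension. Using the formula $\fpdim Z(\EA)=\fpdim(\EA)^2$ for a multi-fusion category $\EA$, the closedness of $\EM$ and $\EN$ gives $\fpdim(\EM)^2=\fpdim(\EC)\fpdim(\ED)$ and $\fpdim(\EN)^2=\fpdim(\ED)\fpdim(\EE)$. Combined with the identity $\fpdim(\EM\boxtimes_\ED\EN)=\fpdim(\EM)\fpdim(\EN)/\fpdim(\ED)$, this yields
\[
\fpdim Z(\EM\boxtimes_\ED\EN)=\fpdim(\EM\boxtimes_\ED\EN)^2=\fpdim(\EC)\fpdim(\EE)=\fpdim(\overline{\EC}\boxtimes\EE),
\]
so $\phi$ will be an equivalence as soon as it is fully faithful. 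This is the main obstacle I anticipate: the functor $Z(\EM)\to Z(\EM\boxtimes_\ED\EN)$ through which the $\overline{\EC}$-part of $\phi$ factors is not automatically fully faithful, so one cannot simply transport hom-spaces along $\phi_\EM^{-1}$ and $\phi_\EN^{-1}$. My approach would be to use the explicit description (\ref{eq:xRy}) of $\boxtimes_\ED$ as a $\ED$-module functor category together with the internal hom identity (\ref{eq:internal-hom}) to compute hom-spaces in $\EM\boxtimes_\ED\EN$ between objects of the form $(c\boxtimes_\ED\one_\EN)\otimes(\one_\EM\boxtimes_\ED e)$, thereby reducing the fully-faithfulness check to the already-known equivalences $\phi_\EM$ and $\phi_\EN$.
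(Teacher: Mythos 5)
You should first note that the paper offers no proof of this statement at all---it is quoted verbatim from \cite{kong-zheng}, Thm.\,3.3.6---so your argument has to stand on its own, and as written it has two genuine gaps. The more serious one is the Frobenius--Perron bookkeeping. The identity $\fpdim Z(\EA)=\fpdim(\EA)^2$ holds for fusion categories but fails for multi-fusion ones, and $\EM\boxtimes_\ED\EN$ is genuinely multi-fusion even when $\EM$ and $\EN$ are fusion; the paper's own toric-code computation (Example\,\ref{expl:toric-2}, Eq.\,(\ref{eq:toric-code-ACB})) is a counterexample: with $\EC=\EE=\bk$, $\ED=Z(\rep(\Zb_2))$ and $\EM=\EN=\fun_{\rep(\Zb_2)}(\rep(\Zb_2),\rep(\Zb_2))$ one gets $\EM\boxtimes_\ED\EN\simeq\fun_\bk(\rep(\Zb_2),\rep(\Zb_2))$, which has Frobenius--Perron dimension $4$ while its Drinfeld center is $\bk$, so $\fpdim Z(\EA)\neq\fpdim(\EA)^2$ there; moreover $\fpdim(\EM)\fpdim(\EN)/\fpdim(\ED)=1\neq 4$, so your formula for $\fpdim(\EM\boxtimes_\ED\EN)$ fails as well (in this example the two errors cancel, which is luck, not a proof). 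Any dimension argument must be run on the fusion component of the unit of $\EM\boxtimes_\ED\EN$, and controlling its dimension requires knowing how $\one_{\EM\boxtimes_\ED\EN}$ decomposes, which your sketch does not address. The second gap is the one you flag yourself: fully faithfulness of $\phi$ is only announced as a plan. Computing $\hom_{\EM\boxtimes_\ED\EN}$ between objects of the form $\iota_\EM(\phi_\EM(c))\otimes\iota_\EN(\phi_\EN(e))$ via Eqs.\,(\ref{eq:xRy}) and (\ref{eq:internal-hom}) is essentially equivalent to the theorem itself, so at present the heart of the proof is missing. A smaller point: the descent of your componentwise half-braiding to $\EM\boxtimes_\ED\EN$ is not a consequence of ``naturality in the right $\ED$-action''; what makes the balancing compatibility work is that $\phi_\EM$ is braided on $\overline{\EC}\boxtimes\ED$, so the mutual braiding between the images of $\overline{\EC}$ and of $\ED$ in $Z(\EM)$ is trivial---the same mechanism that makes the monoidal structure on $\EM\boxtimes_\ED\EN$ well defined.

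If you want a self-contained argument rather than citing \cite{kong-zheng}, a route that avoids the multi-fusion dimension count is the one underlying \cite{dmno}: use $Z(\EM)\boxtimes Z(\EN)\simeq\overline{\EC}\boxtimes\ED\boxtimes\overline{\ED}\boxtimes\EE$, realize $\EM\boxtimes_\ED\EN$ as the category of modules in $\EM\boxtimes\EN$ over the image of the canonical Lagrangian algebra of $\ED\boxtimes\overline{\ED}$, and use that passing to local modules over that algebra removes the factor $\ED\boxtimes\overline{\ED}$; this identifies $Z(\EM\boxtimes_\ED\EN)$ with $\overline{\EC}\boxtimes\EE$ compatibly with your canonical functor $\phi$, proving closedness without ever comparing Frobenius--Perron dimensions of multi-fusion categories. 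Alternatively, Eq.\,(\ref{eq:MMNN}), which the paper derives from the fully faithful center functor of \cite{kong-zheng}, already encodes the statement; but then one is again invoking the cited machinery rather than proving the theorem independently.
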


\void{
\begin{thm}[\cite{kong-zheng} Cor.\,2.5.3]
If $\EE$ is a closed multi-fusion $\bk$-$\bk$-bimodule, then there is a $\bk$-module $\EM$ (i.e. a unitary category) such that $\EE \simeq \fun_\bk(\EM,\EM)$.
\end{thm}
}

We recall the main result in \cite{kong-zheng} that can be generalized to the unitary case automatically. Let $\mathbf{UMFC}^{\mathrm{ind}}$ be the category of indecomposable UMFC's with morphisms given by the equivalence classes of non-zero bimodules. Let $\mathbf{UMTC}$ be the category of UMTC's with morphisms given by the equivalence classes of closed multi-fusion bimodules.

\begin{thm}[\cite{kong-zheng} Thm.\,3.3.7] \label{thm:kz}
There is a well-defined functor $Z: \mathbf{UMFC}^{\mathrm{ind}} \to \mathbf{UMTC}$ given by $\EC \mapsto Z(\EC)$ on object and ${}_\EC \EM_\ED \mapsto Z(\EM):=\fun_{\EC|\ED}(\EM,\EM)$ on morphism. Moreover, the functor $Z$ is fully faithful.
\end{thm}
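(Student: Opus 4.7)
The proof breaks into four checks: (a) $\fun_{\EC|\ED}(\EM,\EM)$ is a closed multi-fusion $Z(\EC)$-$Z(\ED)$-bimodule for every non-zero ${}_\EC\EM_\ED$; (b) $Z$ preserves identities and composition; (c) $Z$ is faithful; (d) $Z$ is full. For (a), $\fun_{\EC|\ED}(\EM,\EM)$ is a UMFC by Sec.\,\ref{sec:umfc}. The $\EC\boxtimes\ED^\rev$-action on $\EM$ produces a natural braided monoidal functor $\overline{Z(\EC)}\boxtimes Z(\ED)\to Z(\fun_{\EC|\ED}(\EM,\EM))$: a half-braided object $(z,c_{z,-})$ of $Z(\EC)$ yields the endofunctor $z\odot -$, which is a bimodule functor because of the half-braiding, and its half-braiding against any other bimodule endofunctor $F$ is supplied by naturality. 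To verify closedness I would rewrite $\fun_{\EC|\ED}(\EM,\EM)$ via (\ref{eq:xRy}) and Rem.\,\ref{rem:tensor-bimod} as $\EM^\op\boxtimes_{\EC\boxtimes\ED^\rev}\EM$, use that $\EM$ is closed as an $\EC\boxtimes\ED^\rev$-module (Def.\,\ref{def:closed-module}), and conclude with the UMTC identity $Z(Z(\EA))\simeq \overline{Z(\EA)}\boxtimes Z(\EA)$ applied to $\EA=\EC\boxtimes\ED^\rev$.

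For (b), the identity condition $Z(\id_\EC)\simeq Z(\EC)$ reduces to the classical fact $\fun_{\EC|\EC}(\EC,\EC)\simeq Z(\EC)$. Composition demands a natural equivalence
\[
\fun_{\EC|\ED}(\EM,\EM)\boxtimes_{Z(\ED)}\fun_{\ED|\EE}(\EN,\EN)\;\simeq\;\fun_{\EC|\EE}(\EM\boxtimes_\ED\EN,\EM\boxtimes_\ED\EN).
\]
I would rewrite both sides via (\ref{eq:xRy}) as cyclic tensor products of the form $\EM^\op\boxtimes_{\EC\boxtimes\ED^\rev}\EM\boxtimes_{Z(\ED)}\EN^\op\boxtimes_{\ED\boxtimes\EE^\rev}\EN$ and match them by reassociating and absorbing the middle $Z(\ED)$ using Rem.\,\ref{rem:tensor-bimod}. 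The closedness of the resulting bimodule is automatic by Thm.\,\ref{thm:closed}.

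For (c), faithfulness reduces to: an equivalence $Z(\EM)\simeq Z(\EM')$ of closed $Z(\EC)$-$Z(\ED)$-bimodules forces $\EM\simeq \EM'$. I would fix a non-zero $m\in\EM$, form the algebra $A=[m,m]$ in $\EC\boxtimes\ED^\rev$ using (\ref{eq:internal-hom}), identify $\EM$ with the category of right $A$-modules, and observe that the Morita class of $A$ is canonically recoverable from the closed bimodule $Z(\EM)$ together with a choice of generator; the equivalence $Z(\EM)\simeq Z(\EM')$ transports $A$ to the Morita equivalent $A'=[m',m']$, yielding $\EM\simeq\EM'$.

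The main obstacle is (d), fullness. Given an arbitrary closed multi-fusion $Z(\EC)$-$Z(\ED)$-bimodule $\EK$, the plan is to pick a faithful object $x\in\EK$, form $A=[x,x]\in \EC\boxtimes\ED^\rev$ via the action $\EC\boxtimes\ED^\rev\to Z(\EK)\to\EK$, and set $\EM$ to be the category of right $A$-modules in $\EC\boxtimes\ED^\rev$, which carries a natural $\EC$-$\ED$-bimodule structure. The key verification is $\fun_{\EC|\ED}(\EM,\EM)\simeq \EK$: one shows that the internal End of the tautological object $A\in\EM$ is $A$ itself and uses that $\phi_\EK$ is an equivalence to identify $Z(\EM)$ with $\EK$. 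The subtle point is arranging that the resulting $\EM$ is an \emph{indecomposable} UMFC-bimodule in the sense of $\mathbf{UMFC}^{\mathrm{ind}}$; this requires a careful choice of the generator $x$ and makes essential use of the non-degeneracy of $\phi_\EK$.
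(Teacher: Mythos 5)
First, note that the paper does not prove this statement at all: Thm.\,\ref{thm:kz} is quoted verbatim from the reference \cite{kong-zheng} (Thm.\,3.3.7), so there is no in-paper argument to compare yours against; what you have written is a sketch of a proof of the cited theorem itself, and it contains genuine gaps at exactly the points where the real work lies. In step (a), your closedness argument invokes ``$\EM$ is closed as a $\EC\boxtimes\ED^\rev$-module (Def.\,\ref{def:closed-module})''. This is false for a general non-zero bimodule ${}_\EC\EM_\ED$: closedness in the sense of Def.\,\ref{def:closed-module} would mean $\EC\boxtimes\ED^\rev\simeq\fun_\bk(\EM,\EM)$, which fails already for $\EM=\EC\boxtimes\ED^\rev$. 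The input actually needed is the invariance of the Drinfeld center under passage to a dual category, $Z(\fun_{\EC|\ED}(\EM,\EM))\simeq Z(\EC\boxtimes\ED^\rev)\simeq\overline{Z(\EC)}\boxtimes Z(\ED)$ (Schauenburg's theorem and its multi-fusion extension), which is a substantive theorem and not a consequence of Eq.\,(\ref{eq:xRy}) plus $Z(Z(\EA))\simeq\overline{Z(\EA)}\boxtimes Z(\EA)$. Similarly, in step (b) the phrase ``reassociating and absorbing the middle $Z(\ED)$'' is precisely the nontrivial computation behind Eq.\,(\ref{eq:MMNN}); note that you cannot cite Eq.\,(\ref{eq:MMNN}) here, since the paper presents it as a consequence of Thm.\,\ref{thm:kz}, so an actual proof (via results of the type of Cor.\,\ref{cor:tensor-fun}, i.e. $\EC\boxtimes_{Z(\EC)}\ED^\rev\simeq\fun_\bk(\EM,\EM)$) has to be supplied.

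The most serious problems are in (c) and (d). For fullness you write ``form $A=[x,x]\in\EC\boxtimes\ED^\rev$ via the action $\EC\boxtimes\ED^\rev\to Z(\EK)\to\EK$'', but no such action exists in the given data: a closed multi-fusion $Z(\EC)$-$Z(\ED)$-bimodule $\EK$ comes only with $\phi_\EK:\overline{Z(\EC)}\boxtimes Z(\ED)\xrightarrow{\simeq}Z(\EK)$, so internal homs of objects of $\EK$ live in $\overline{Z(\EC)}\boxtimes Z(\ED)\simeq Z(\EC\boxtimes\ED^\rev)$, not in $\EC\boxtimes\ED^\rev$. Producing a $\EC\boxtimes\ED^\rev$-action on (something Morita equivalent to) $\EK$ compatible with $\phi_\EK$ is exactly the content of fullness; it requires a genuine theorem, e.g. the Etingof--Nikshych--Ostrik correspondence between braided equivalences $Z(\EA)\simeq Z(\EB)$ and invertible $\EA$-$\EB$-bimodule categories, or equivalently the classification of indecomposable module categories by Lagrangian algebras in the center (one can push $[x,x]$ forward along the forgetful functor $Z(\EC\boxtimes\ED^\rev)\to\EC\boxtimes\ED^\rev$, but then one must prove, not assert, that the dual category of the resulting module recovers $\EK$ together with $\phi_\EK$). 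Your faithfulness argument has the same character: ``the Morita class of $A$ is canonically recoverable from the closed bimodule $Z(\EM)$ together with a choice of generator'' is a restatement of the claim rather than a proof, and a monoidal equivalence $Z(\EM)\simeq Z(\EM')$ compatible with the central functors does not by itself transport algebras in $\EC\boxtimes\ED^\rev$. So the overall architecture (well-definedness, composition, faithful, full) is reasonable, but each of the four steps currently leans on an unproved or false intermediate claim, and the missing ingredients are precisely the center-invariance and Morita-theoretic results that the cited proof in \cite{kong-zheng} is built on.
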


More explicitly, Thm.\,\ref{thm:kz} implies the following result. Let $\EC,\ED,\EE$ be UMFC's. Let $\EM$ be a multi-fusion $\EC$-$\ED$-bimodule and $\EN$ a multi-fusion $\ED$-$\EE$-bimodule. The assignment $f\boxtimes_{Z(\ED)}g \mapsto f\boxtimes_\ED g$ defines an equivalence between two multi-fusion $Z(\EC)$-$Z(\EE)$-bimodules:
\be \label{eq:MMNN}
\fun_{\EC|\ED}(\EM, \EM) \boxtimes_{Z(\ED)} \fun_{\ED|\EE}(\EN,\EN)
\simeq \fun_{\EC|\EE}(\EM\boxtimes_\ED \EN, \EM\boxtimes_\ED \EN).
\ee

The following two corollaries are useful to us.
\begin{cor} \label{cor:tensor-fun}
Let $\EC,\ED$ be UMFC's. Given a braided monoidal equivalence $Z(\ED)\simeq Z(\EC)$, there is a unique (up to equivalence) left $\EC$-module $\EM$ such that $\ED^\rev \simeq \fun_\EC(\EM,\EM)$ as multi-fusion $Z(\EC)$-$\bk$-bimodules. Moreover, there is a canonical monoidal equivalence
\be \label{eq:CC=FCC}
\EC\boxtimes_{Z(\EC)} \ED^\rev \simeq \fun_\bk(\EM,\EM) 
\quad \quad \text{define by} \quad \quad c\boxtimes_{Z(\EC)}d\mapsto c\odot - \odot d.
\ee
\end{cor}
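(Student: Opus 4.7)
The plan is to deduce both claims from the fully faithfulness of the functor $Z:\mathbf{UMFC}^{\mathrm{ind}}\to\mathbf{UMTC}$ (Theorem \ref{thm:kz}) together with the compatibility identity \eqref{eq:MMNN}.

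For the first claim, the first step is to upgrade $\ED^\rev$ to a closed multi-fusion $Z(\EC)$-$\bk$-bimodule in the sense of Definition \ref{def:monoidal-module}. The required datum is a braided monoidal equivalence $\overline{Z(\EC)}\boxtimes\bk\xrightarrow{\simeq}Z(\ED^\rev)$. Since $\overline{Z(\EC)}\boxtimes\bk\simeq\overline{Z(\EC)}$ and $Z(\ED^\rev)$ is canonically identified with $\overline{Z(\ED)}$, such an equivalence is supplied directly by the given braided monoidal equivalence $Z(\ED)\simeq Z(\EC)$. A left $\EC$-module is the same thing as a $\EC$-$\bk$-bimodule, and $Z$ sends such a bimodule $\EM$ to the closed multi-fusion $Z(\EC)$-$\bk$-bimodule $\fun_\EC(\EM,\EM)$. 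The fully faithfulness of $Z$ on Hom sets then produces a unique equivalence class of left $\EC$-modules $\EM$ whose image under $Z$ is (equivalent to) $\ED^\rev$, establishing existence and uniqueness.

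For the second claim, I apply \eqref{eq:MMNN} with $(\EC,\ED,\EE)$ in that identity replaced by $(\bk,\EC,\bk)$, taking $\EC$ as a $\bk$-$\EC$-bimodule (via the regular right $\EC$-action) and $\EM$ as a $\EC$-$\bk$-bimodule. The identity yields the canonical monoidal equivalence
$$
\fun_\EC(\EC,\EC)\boxtimes_{Z(\EC)}\fun_\EC(\EM,\EM)\simeq\fun_\bk(\EC\boxtimes_\EC\EM,\EC\boxtimes_\EC\EM).
$$
Substituting the canonical monoidal equivalences $\fun_\EC(\EC,\EC)\simeq\EC$ (via $a\mapsto a\otimes-$, which realizes left multiplication by $a$ as a right $\EC$-module endofunctor of $\EC$), $\fun_\EC(\EM,\EM)\simeq\ED^\rev$ from the first part, and $\EC\boxtimes_\EC\EM\simeq\EM$, one obtains the desired equivalence $\EC\boxtimes_{Z(\EC)}\ED^\rev\simeq\fun_\bk(\EM,\EM)$.

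To verify the explicit formula, I trace $c\boxtimes_{Z(\EC)}d$ through these identifications. Here $c\in\EC$ corresponds to the endofunctor $c\otimes-$ of $\EC$, while $d\in\ED^\rev$ corresponds to some endofunctor $F_d\in\fun_\EC(\EM,\EM)$; the equivalence $\ED^\rev\simeq\fun_\EC(\EM,\EM)$ in particular endows $\EM$ with a right $\ED$-action, which I write as $F_d(m)=m\odot d$. The map \eqref{eq:MMNN}, $f\boxtimes_{Z(\EC)}g\mapsto f\boxtimes_\EC g$, then sends $c\boxtimes_{Z(\EC)}d$ to the induced endofunctor $(c\otimes-)\boxtimes_\EC(-\odot d)$ of $\EC\boxtimes_\EC\EM$. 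After the identification $\EC\boxtimes_\EC\EM\simeq\EM$, $x\boxtimes_\EC m\mapsto x\odot m$, this endofunctor acts on $x\odot m$ by $(c\otimes x)\odot(m\odot d)=c\odot(x\odot m)\odot d$, i.e., as the endofunctor $c\odot-\odot d$ of $\EM$. The main delicate point throughout is verifying that the intermediate equivalences respect the multi-fusion $Z(\EC)$-$\bk$-bimodule structures (and not merely the underlying monoidal structures); this follows by unwinding the Morita-theoretic constructions of \cite{kong-zheng}, and is the only step requiring real care beyond a direct application of Theorem \ref{thm:kz} and \eqref{eq:MMNN}.
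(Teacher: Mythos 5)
Your proposal is correct and matches the route the paper intends: the corollary is stated there without a written proof precisely as a consequence of the fully faithfulness of the center functor (Thm.\,\ref{thm:kz}) applied to the closed multi-fusion $Z(\EC)$-$\bk$-bimodule structure on $\ED^\rev$ coming from $Z(\ED)\simeq Z(\EC)$ together with Eq.\,(\ref{eq:MMNN}), which is exactly the derivation you carry out. Your tracing of $c\boxtimes_{Z(\EC)}d$ through the identifications $\fun_{\EC^\rev}(\EC,\EC)\simeq\EC$ and $\EC\boxtimes_\EC\EM\simeq\EM$ correctly recovers the explicit formula $c\odot-\odot d$, so nothing essential is missing.
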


\begin{cor} \label{cor:0-cell-condition}
Let $\EC_0, \cdots, \EC_n$ be UMTC's and $\EC_n=\EC_0$. Let $\EM_i$ be a closed multi-fusion
$\EC_{i-1}$-$\EC_i$-bimodule, $i=1,\cdots, n$. There is a unique (up to equivalence) unitary category $\EP$ such that
$$
\cboxtimes_{(\EC_0,\cdots,\EC_{n-1})}(\EM_1,\cdots,\EM_n) \simeq \fun_\bk(\EP,\EP).
$$
\end{cor}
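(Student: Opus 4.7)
The plan is to reduce the cyclic tensor product to a single application of Cor.\,\ref{cor:tensor-fun}. First, set
$$
\EK := \EM_1\boxtimes_{\EC_1}\EM_2\boxtimes_{\EC_2}\cdots\boxtimes_{\EC_{n-1}}\EM_n.
$$
An $(n-1)$-fold application of Thm.\,\ref{thm:closed} shows that $\EK$ is a closed multi-fusion $\EC_0$-$\EC_n$-bimodule; since $\EC_n=\EC_0$ by hypothesis, $\EK$ is a closed multi-fusion $\EC_0$-$\EC_0$-bimodule, so that $\phi_\EK\colon\overline{\EC_0}\boxtimes\EC_0\to Z(\EK)$ is a braided monoidal equivalence.

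Next, I would rewrite the cyclic tensor product as $\EC_0\boxtimes_{Z(\EC_0)}\EK$. Unwinding the symmetric notation and using associativity of the balanced tensor product together with the identity bimodule $\EC_0$,
$$
\cboxtimes_{(\EC_0,\ldots,\EC_{n-1})}(\EM_1,\ldots,\EM_n)
\;=\; \EM_1\boxtimes_{\EC_0^\rev\boxtimes\EC_1}(\EM_2\boxtimes_{\EC_2}\cdots\boxtimes_{\EC_{n-1}}\EM_n)
\;\simeq\; \EK\boxtimes_{\EC_0^\rev\boxtimes\EC_0}\EC_0.
$$
Because $\EC_0$ is a UMTC, the braiding induces a braided monoidal equivalence $\EC_0^\rev\boxtimes\EC_0\simeq\overline{\EC_0}\boxtimes\EC_0\simeq Z(\EC_0)$, under which the $\EC_0^\rev\boxtimes\EC_0$-actions on $\EK$ and on $\EC_0$ (coming from the respective bimodule structures) correspond to the $Z(\EC_0)$-actions coming from $\phi_\EK$ and from the canonical forgetful functor $Z(\EC_0)\to\EC_0$. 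Therefore the cyclic tensor product is equivalent to $\EC_0\boxtimes_{Z(\EC_0)}\EK$.

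Finally, I apply Cor.\,\ref{cor:tensor-fun} with $\EC=\EC_0$ and $\ED^\rev=\EK$. The required hypothesis $Z(\ED)\simeq Z(\EC)$ is met because closedness of $\EK$ yields $Z(\EK^\rev)\simeq\overline{Z(\EK)}\simeq\EC_0\boxtimes\overline{\EC_0}\simeq Z(\EC_0)$. The corollary then produces a left $\EC_0$-module $\EP$, unique up to equivalence, with $\EC_0\boxtimes_{Z(\EC_0)}\EK\simeq\fun_\bk(\EP,\EP)$. Uniqueness of $\EP$ as a unitary category follows from the fact that any unitary category is equivalent to a direct sum of copies of $\bk$, and $\fun_\bk(\EP,\EP)$ recovers the number of summands. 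The main obstacle lies in the second step: verifying that the half-braidings implicit in the bimodule structure on $\EK$ (packaged by $\phi_\EK$) match those defining the Drinfeld center $Z(\EC_0)$ under the braided equivalence $\EC_0^\rev\boxtimes\EC_0\simeq Z(\EC_0)$; this is a bookkeeping task that must be executed carefully in view of the $\rev$ versus $\overline{(-)}$ conventions.
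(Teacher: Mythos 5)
Your argument is correct and uses exactly the ingredients the paper intends for this corollary (Thm.\,\ref{thm:closed} to make $\EK$ a closed multi-fusion $\EC_0$-$\EC_0$-bimodule, then Cor.\,\ref{cor:tensor-fun} applied to $\EC_0\boxtimes_{Z(\EC_0)}\EK$, precisely the pattern of Lem.\,\ref{lem:M-cylinder}), so it matches the paper's (implicit) proof, and your counting-of-simples argument for uniqueness of $\EP$ is fine. The one step that deserves more than the word ``associativity'' is the identification $\EM_1\boxtimes_{\EC_0^\rev\boxtimes\EC_1}(\EM_2\boxtimes_{\EC_2}\cdots\boxtimes_{\EC_{n-1}}\EM_n)\simeq\EK\boxtimes_{\EC_0^\rev\boxtimes\EC_0}\EC_0$, which is the standard Fubini/trace property of iterated balanced tensor products (cf.\ \cite{fss}); alternatively you can bypass it, together with most of the $\rev$-versus-$\overline{(\cdot)}$ bookkeeping, by applying Cor.\,\ref{cor:tensor-fun} directly with $\EC=\EM_1$ and $\ED=(\EM_2\boxtimes_{\EC_2}\cdots\boxtimes_{\EC_{n-1}}\EM_n)^\rev$, since closedness of $\EM_1$ already gives $\EC_0^\rev\boxtimes\EC_1\simeq\overline{\EC_0}\boxtimes\EC_1\simeq Z(\EM_1)$.
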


\section{Factorization homology} \label{sec:fh}

In this section, we review 
the definition and fundamental properties of factorization homology (of stratified spaces) and show some examples for later use.

\subsection{Factorization homology} \label{sec:def-fh}
\begin{defn}
We define $\text{Mfld}_n^\Or$ to be the topological category whose objects are oriented $n$-manifolds without boundary. For any two oriented $n$-manifolds $M$ and $N$, the morphism space $\text{Hom}_{\text{Mfld}_n^\Or}(M,N)$ is the space of all orientation-preserving embeddings $e:M\to N$, endowed with the compact-open topology. We define $\EM\text{fld}_n^\Or$ to be the symmetric monoidal $\infty$-category associated to the topological category $\text{Mfld}^\Or_n$ \cite{lurie0}. The symmetric monoidal structure is given by disjoint union.
\end{defn}

\begin{defn}
The symmetric monoidal $\infty$-category $\Disk_n^\Or$ is the full subcategory of $\EM\text{fld}_n^\Or$ whose objects are disjoint union of finitely many $n$-dimensional Euclidean spaces $\coprod_{I}\Rb^n$ equipped with the standard orientation.
\end{defn}


\begin{defn} \label{def:2-disk-alg}
Let $\EV$ be a symmetric monoidal $\infty$-category. An {\em $n$-disk algebra} in $\EV$ is a symmetric monoidal functor $A:\Disk_n^\Or\to\EV$.
\end{defn}

\begin{rem}
An $n$-disk algebra in this work is called an oriented $n$-disk algebra in \cite{af}. Since we only consider oriented $n$-disk algebras in this work, we drop the ``oriented" for simplicity.
\end{rem}

\begin{expl} \label{expl:disk-alg}
In this work, we are mainly interested in examples of 0-,1-,2-disk algebras in the symmetric monoidal $(2,1)$-category of unitary categories, denoted by $\EV_{\mathrm{uty}}$. The tensor product of $\EV_{\mathrm{uty}}$ is given by Deligne tensor product $\boxtimes$.
\bnu
\item A 2-disk algebra in $\EV_{\mathrm{uty}}$ is a unitary braided monoidal category (an $E_2$-algebra) together with a twist: $\theta_a: a \xrightarrow{\simeq} a, \forall a\in \EC$ such that $\theta_\one=\id_\one$ and $\theta_{a\otimes b} = c_{b,a}\circ c_{a,b} \circ (\theta_a \otimes \theta_b)$ (see for example \cite{wahl,sw}\cite[Example\,5.1.2.4]{lurie}\cite{fresse}). The twist is needed for the factorization homology to be defined on surfaces without framing. A unitary braided fusion category is automatically equipped with a canonical spherical structure, which automatically includes the structure of a twist. Therefore, a unitary braided fusion category gives a 2-disk algebra in $\EV_{\mathrm{uty}}$. In this work, we are only interested in such 2-disk algebras in $\EV_{\mathrm{uty}}$ satisfying an additional anomaly-free condition: non-degenerate unitary braided fusion categories, or equivalently, unitary modular tensor categories (UMTC's).

\item A 1-disk algebra (or an $E_1$-algebra) in $\EV_{\mathrm{uty}}$ is a unitary monoidal category. In this work, we are only interested in a special class of such 1-disk algebras in $\EV_{\mathrm{uty}}$: unitary multi-fusion categories (UMFC's).

\item A 0-disk algebra (or an $E_0$-algebra) in $\EV_{\mathrm{uty}}$ is a pair $(\EP,p)$, where $\EP$ is a unitary category and $p\in \EP$ is a distinguished object.
\enu
\end{expl}

We need the notion of colimit in the $\infty$-categorical context, see the reference \cite{lurie0}.
Let $F:\EI\to\EV$ be a functor of $\infty$-categories,
we denote its colimit by $\text{Colim}(\EI\xrightarrow{F}\EV)$. 

\begin{defn} \label{def:fh-1}
Let $\EV$ be a symmetric monoidal $\infty$-category, $M$ be an oriented $n$-manifold, and let $A:\Disk^\Or_n\to\EV$ be an $n$-disk algebra in $\EV$. The {\it factorization homology} of $M$
with coefficient in $A$ is an object of $\EV$ given by the following expression
$$\int_MA:=\text{Colim}\left((\Disk_n^\Or)_{/M}\to\Disk_n^\Or\xrightarrow{A}\EV\right),$$
where $(\Disk_n^\Or)_{/M}$ is the over category of $n$-disks embedded in $M$.
\end{defn}

\begin{rem} \label{rem:fh-0disk}
Let $\one_n$ be the trivial $n$-disk algebra, which assigns to each $\Rb^n$ the unit object $\one_\EV$ of $\EV$. It is clear that $\int_M\one_n \simeq \one_\EV$. The canonical morphism $\one_n \to A$ then induces a morphism $\one_\EV \to \int_MA$. This is to say, the factorization homology $\int_MA$ is not merely an object of $\EV$, but also equipped with the structure of a 0-disk algebra. This additional structure is very important in applications to topological orders, because it corresponds to the notion of GSD. For this purpose, we will compute factorization homology explicitly as a 0-disk algebra.
\end{rem}

Factorization homology generalizes straightforwardly to manifolds with boundaries. Precisely, we enlarge $\EM\text{fld}^\Or_n$ to the symmetric monoidal $\infty$-category $\EM\text{fld}^{\partial,\Or}_n$ of $n$-manifolds possibly with boundaries and orientation-, boundary-preserving embeddings; enlarge $\Disk^\Or_n$ to the full subcategory $\Disk^{\partial,\Or}_n$ consisting of disjoint unions of $\Rb^n$'s and $\Rb^{n-1}\times [0,1)$'s. Then define factorization homology in the same way.

\begin{defn} \label{def:collar}
A {\it collar-gluing} among $n$-manifolds $M$ is a continuous map $f: M \to [-1,1]$ to the closed interval such that the restriction of $f$ to the preimage of $(-1,1)$ is a manifold bundle. We denote a collar-gluing $f: M \to [-1,1]$ simply by the open cover $M_{[-1,1)}\cup_{M_{\{0\}}\times \Rb} M_{(-1,1]}\cong M$, where $M_{[-1,1)}$, $M_{(-1,1]}$ and $M_{\{0\}}$ are the preimages of $[-1,1)$, $(-1,1]$ and $\{ 0\}$.
\end{defn}

Note that $\int_{M_{\{0\}}\times \Rb} A$ has a canonical 1-disk algebra structure. Moreover,
$\int_{M_{[-1,1)}} A$ has a structure of the right module over $\int_{M_{\{0\}}\times \Rb} A$. Similarly, $\int_{M_{(-1,1]}} A$ has a structure of the left module over $\int_{M_{\{0\}}\times \Rb} A$. Therefore, one can talk about their tensor product over $\int_{M_{\{0\}}\times \Rb} A$ (if it exists). 


\begin{thm} [\cite{af} Lem.\,3.18] \label{thm:tensor-excision}
Suppose $\EV$ is presentable and the tensor product $\otimes: \EV \times \EV \to \EV$ preserves small colimits for both variables. 
Then the factorization homology satisfies the $\otimes$-excision property. That is, for any collar-gluing
$M_{[-1,1)}\cup_{M_{\{0\}}\times \Rb} M_{(-1,1]}\cong M$, we have a canonical equivalence:
\be \label{eq:excision}
\int_M A \simeq \left(\int_{M_{[-1,1)}} A\right) \otimes_{\int_{M_{\{0\}} \times \Rb} A} \left( \int_{M_{(-1,1]}} A \right).
\ee
\end{thm}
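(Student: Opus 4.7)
The plan is to unfold the definition $\int_M A = \mathrm{Colim}\bigl((\Disk_n^\Or)_{/M} \to \Disk_n^\Or \xrightarrow{A} \EV\bigr)$ and reorganize the indexing $\infty$-category so that it matches the two-sided bar construction that computes the relative tensor product on the right-hand side of (\ref{eq:excision}). Since factorization homology is functorial for open embeddings, the three inclusions $M_{[-1,1)}\hookrightarrow M$, $M_{(-1,1]}\hookrightarrow M$ and $M_{\{0\}}\times\Rb \hookrightarrow M_{[-1,1)},\,M_{(-1,1]}$ give compatible maps into $\int_M A$ that are modules over $\int_{M_{\{0\}}\times\Rb} A$ in the appropriate sense, and hence assemble into a canonical comparison morphism from the right-hand side to $\int_M A$. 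The task is to show this morphism is an equivalence.

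The central step is a cofinality argument. Set $\EU=\{M_{[-1,1)},M_{(-1,1]}\}$ and let $\Disk(\EU)\subset(\Disk_n^\Or)_{/M}$ be the full subcategory spanned by those disjoint-disk embeddings $\coprod_I \Rb^n\hookrightarrow M$ each component of which lies entirely inside $M_{[-1,1)}$ or entirely inside $M_{(-1,1]}$. The claim is that the inclusion $\Disk(\EU)\hookrightarrow(\Disk_n^\Or)_{/M}$ is cofinal: for any component crossing the collar $M_{\{0\}}\times\Rb$, there is a contractible space of isotopies along the collar coordinate that shrinks it into one of the two pieces. Combined with an $\infty$-categorical version of Quillen's Theorem A (as in Lurie's HTT), this lets us replace $(\Disk_n^\Or)_{/M}$ by $\Disk(\EU)$ in the defining colimit without changing its value.

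Granted cofinality, one stratifies $\Disk(\EU)$ by the Cech-type nerve of the cover $\EU$, extracting a simplicial object
\[
\mathrm{Bar}_\bullet\!\Bigl(\textstyle\int_{M_{[-1,1)}}A,\ \int_{M_{\{0\}}\times\Rb}A,\ \int_{M_{(-1,1]}}A\Bigr)
\]
whose $n$-th level is $\int_{M_{[-1,1)}}A\otimes\bigl(\int_{M_{\{0\}}\times\Rb}A\bigr)^{\otimes n}\otimes\int_{M_{(-1,1]}}A$, with face maps given by the $E_1$-algebra structure on $\int_{M_{\{0\}}\times\Rb}A$ and its module actions on the two flanking terms. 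The presentability of $\EV$ and the hypothesis that $\otimes$ preserves small colimits in each variable let us commute the geometric realization of this simplicial object past the defining colimits of factorization homology on each piece; its realization is by definition the relative tensor product $\int_{M_{[-1,1)}}A\otimes_{\int_{M_{\{0\}}\times\Rb}A}\int_{M_{(-1,1]}}A$, yielding (\ref{eq:excision}).

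The main obstacle is the cofinality claim. While geometrically transparent, its rigorous $\infty$-categorical formulation requires producing, for each disk configuration $D\hookrightarrow M$, a weakly contractible space of refinements lying in $\Disk(\EU)$ together with comparison arrows in $(\Disk_n^\Or)_{/M}$, which amounts to a parametrized isotopy-extension argument for families of disk embeddings controlled by the collar coordinate $f:M\to[-1,1]$. Once this is settled, the identification of the resulting colimit with the bar construction, and thence with the relative tensor product, is formal given the hypotheses on $\EV$.
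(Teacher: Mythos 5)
A preliminary remark: the paper does not prove this theorem at all --- it is imported verbatim from Ayala--Francis (\cite{af}, Lem.\,3.18), so the only meaningful comparison is with the proof in that reference. Your outline follows essentially the same route as the cited proof: replace the defining colimit over $(\Disk_n^\Or)_{/M}$ by configurations of disks adapted to the collar-gluing, reorganize that colimit as the two-sided bar construction on $\bigl(\int_{M_{[-1,1)}}A,\ \int_{M_{\{0\}}\times\Rb}A,\ \int_{M_{(-1,1]}}A\bigr)$, and use presentability of $\EV$ together with $\otimes$ preserving colimits in each variable to identify the realization with the relative tensor product (which is, by definition in this setting, that realization). So the strategy is the right one and is not a genuinely different route.

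As a standalone proof, however, what you wrote leaves the technical heart unestablished, and in two places rather than the one you flag. First, the finality of $\Disk(\EU)\hookrightarrow(\Disk_n^\Or)_{/M}$ is not an instance of the general ``disks subordinate to a factorizing (Weiss) cover are final'' statement, since the two-element cover $\{M_{[-1,1)},M_{(-1,1]}\}$ is not a Weiss cover; the collar structure $f:M\to[-1,1]$ must enter essentially, and ``a contractible space of isotopies shrinking a crossing disk into one of the two pieces'' is precisely what has to be proved: the relevant comma categories contain refinements landing in $M_{[-1,1)}$ and refinements landing in $M_{(-1,1]}$, and their weak contractibility is not evident without an argument of isotopy-extension/localization type. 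Second, the step ``stratify $\Disk(\EU)$ by the \v{C}ech-type nerve, extracting a simplicial object'' conceals a second finality statement: to rewrite the colimit over $\Disk(\EU)$ as a $\Delta^\op$-indexed colimit one must index configurations by the collar coordinate, which in \cite{af} is done by pushing forward along $f$ and showing that $\Delta^\op$ maps finally to (boundary-)disks in $[-1,1]$, i.e.\ that factorization homology of the closed interval with coefficients in a $\Disk^{\partial,\Or}_1$-algebra is the two-sided bar construction. Once those two finality lemmas are supplied, the remaining commutations of colimits and the identification with $\otimes_{\int_{M_{\{0\}}\times\Rb}A}$ are formal, as you say; but without them the proposal is a correct plan rather than a proof.
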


\begin{rem}
The $\otimes$-excision property uniquely determines $\int_M A$ up to equivalence \cite[Thm.\,3.24]{af}. Therefore, 
we can also take this $\otimes$-excision property as our working definition of factorization homology. 
\end{rem}

\begin{rem}
The symmetric monoidal $(2,1)$-category of unitary categories $\EV_{\mathrm{uty}}$ does not satisfy the condition of Thm.\,\ref{thm:tensor-excision}. However, the symmetric monoidal $(2,1)$-category of presentable $\Cb$-linear categories $\Pres_\Cb$ satisfies that condition and there is a fully faithful embedding $\Ind: \EV_{\mathrm{uty}} \hookrightarrow \Pres_\Cb$ that carries a unitary category to its Ind-completion. Actually, a unitary category is nothing but a finite direct sum of $\bk$ and its Ind-completion is nothing but a direct sum of the category of vector spaces. Moreover, it is clear that this embedding preserves  tensor products over UMFC's. Therefore, one may use the $\otimes$-excision property to compute factorization homology in $\EV_{\mathrm{uty}}$ by keeping in mind that $\EV_{\mathrm{uty}}$ is embedded in $\Pres_\Cb$. See \cite{bbj1} for a treatment of a much more sophisticated case.
\end{rem}

It turns out that this $\otimes$-excision property is a special case of a more general property: the {\it pushforward property}, which is useful in the study of topological orders. Let $M$ be an oriented $m$-manifold, $N$ an oriented $n$-manifold, possibly with boundary, and $f: M \to N$ a map which fibers over the interior and boundary of $N$. In \cite{af}, they defined a functor $f_{\ast}A:\Disk^{\partial,\Or}_n\to\EV$ as follows. Given an embedding $e:U\to N$ where $U=\Rb^n$ or $\Rb^{n-1}\times [0,1)$, define
$$
(f_{\ast}A)(U):=\int_{f^{-1}(e(U))}A.$$
In other words, by integrating along the fiber, we get a $\Disk^{\partial,\Or}_n$ algebra $f_{\ast}A$.
By \cite[Prop.\,3.23]{af}, if $\EV$ satisfies the condition of Thm.\,\ref{thm:tensor-excision}, there is a canonical equivalence
\be \label{eq:pushforward}
\int_N f_\ast A \xrightarrow{\simeq}  \int_M A.
\ee
We show in Sec.\,\ref{sec:dr-fh} that this pushforward property coincides with the process of dimensional reduction in topological orders.

\begin{expl}
A well-known example of the application of the pushforward property is to compute the factorization homology of $S^1$ with coefficient in a 1-disk algebra $A$. One can use the standard projection from $S^1$ onto $[-1,1]$. Then the pushforward property and the $\otimes$-excision property imply that 
\be \label{eq:hh-A}
\int_{S^1} A \simeq A\otimes_{A\otimes A^\rev} A.
\ee
We would also like to remark that the $\otimes$-excision property is enough for all computations. 
The pushforward property is used in this work only for the purpose of matching the physical intuition of the dimensional reduction in topological orders. 
\end{expl}


\subsection{Factorization homology for stratified surfaces} \label{sec:fh-2}
In this subsection, we consider a very special case of factorization homology of stratified spaces developed in \cite{aft-1,aft-2}. We will use sets of colors instead of an $\infty$-category of basics for simplicity.

\medskip
Let $L=(L_0,L_1,L_2)$, where $L_0,L_1,L_2$ are three sets whose elements are called {\it colors}. We use $C(X)$ to denote the open cone $X\times [0,1)/{X\times\{0\}}$ of a topological space $X$. Note that $\Rb^2$ can be identified with the open cone $C(S^1)$ of the circle.

\begin{defn}
An {\it unoriented stratified surface} is a pair
$(\Sigma,\,\,\Sigma\xrightarrow{\pi}\{0,1,2\})$
where $\Sigma$ is a surface and $\pi$ is a map. The subspace $\Sigma_i := \pi^{-1}(i)$ is called the {\it $i$-stratum} and its connected components are called {\it $i$-cells}. These data are required to satisfy the following conditions:
\bnu
\item $\Sigma_0$ and $\Sigma_0\cup\Sigma_1$ are closed subspaces of $\Sigma$.
\item For each point $x\in \Sigma_1$, there exists an open neighborhood $U$ of $x$ such that
$(U,U\cap\Sigma_1,U\cap\Sigma_0)\cong (\Rb^2,\Rb^1,\emptyset)$.
 \item For each point $x\in \Sigma_0$, there exists an open neighborhood $V$ of $x$ and a finite subset $I\subset S^1$, such that
 $(V,V\cap\Sigma_1,V\cap\Sigma_0)\cong(\Rb^2,C(I)\backslash\{\text{cone point}\},\{\text{cone point}\})$.
\enu
An {\it (oriented) stratified surface} is a such a pair $(\Sigma,\pi)$ together with an orientation of $\Sigma$ as well as an orientation of each 1-cell.
An {\it $L$-stratified surface} is a stratified surface with each $i$-cell colored by an element of $L_i$, $i=0,1,2$.
\end{defn}

\void{
\begin{rem}
In \cite{af}, the stratified surfaces do not have $L$-labels. We introduce this new notion because it allows us to treat the same stratified 2-disks with different labels as different $L$-stratified 2-disks so that we can assign different $i$-disk algebras in $\EV$ to them (see Def.\,\ref{def:coeff} and Example\,\ref{expl:coeff}). This is very natural and important in the application of factorization homology in topological orders.
\end{rem}
}


\begin{expl}  \label{expl:labeled-disk}
We give three important types of $L$-stratified 2-disks and refer to them as {\it standard $L$-stratified 2-disk}.

\begin{itemize}
\item [$(1)$] $\Sigma=\Rb^2$, $\Sigma_2=\Sigma$ colored by an element of $L_2$. See Fig.\,\ref{fig:labeled-disk} (a). We call such an $L$-stratified 2-disk as a colored $\Rb^2$.
\item [$(2)$] $\Sigma=\Rb^2$, $\Sigma_1=\Rb\times \{0\}$ colored by an element of $L_1$,
$\Sigma_2=\Rb^2\backslash\Rb\times \{0\}$ colored by a pair of elements of $L_2$. See Fig.\,\ref{fig:labeled-disk} (b). We call such an $L$-stratified 2-disk a colored $(\Rb^2;\Rb^1)$.
 \item [$(3)$] $\Sigma=\Rb^2$, $\Sigma_0=\{0\}$ is the origin in $\Rb^2$, $\Sigma_1=C(I)\backslash\{0\}$, $\Sigma_2=\Rb^2\backslash C(I)$, where $I\subset S^1$ is finite subset. See Fig.\,\ref{fig:labeled-disk} (c). 
The $i$-cells are colored by elements of $L_i$ for $i=0,1,2$. We call such an $L$-stratified 2-disk a colored $(\Rb^2;C(I))$. Note that there are many choices of orientations on 1-cells.
\end{itemize}
\end{expl}

\begin{rem} \label{rema:gamma}
The data of $\pi$ in a stratified surface $(\Sigma,\pi)$ is quite implicit. We would like to have a convenient notation that can give us some partial but important information of $\pi$. A stratified surface $\Sigma$ is equivalent to a surface $\Sigma$ together with a finite oriented graph $\Gamma$, in which the edges in $\Gamma$ are 1-cells and vertices in $\Gamma$ are 0-cells, i.e. $\Gamma=(\Sigma_1,\Sigma_0)$. We denote the stratified surface $\Sigma$ also by $(\Sigma;\Gamma)$ or $(\Sigma; \Sigma_1;\Sigma_0)$, i.e.
$$
\Sigma=(\Sigma;\Gamma)=(\Sigma; \Sigma_1;\Sigma_0).
$$
We have already used this notation convention in Example\,\ref{expl:labeled-disk}.
\end{rem}

\begin{defn}
We define $\text{Mfld}^{L-\stra}$ to be the topological category whose objects are $L$-stratified surfaces, and morphism space between two $L$-stratified surfaces $M$ and $N$ are embeddings $e:M\to N$ that preserve the stratifications, the orientations on 1-,2-cells and the colors. We define $\EM\text{fld}^{L-\stra}$ to be the symmetric monoidal $\infty$-category associated to the topological category $\text{Mfld}^{L-\stra}$. The symmetric monoidal structure is given by disjoint union.
\end{defn}

\void{
\begin{defn}
$\text{Disk}^{L-\stra}$ is the topological category for which an object is a disjoint union of standard $L$-stratified 2-disks. For two such $L$-stratified surfaces $M$ and $N$, the morphism space $\text{Hom}_{\text{Disk}^{L-\stra}}(M,N)$ is the space of all embeddings $e:M\to N$ preserving the stratifications, the orientations on 1-,2-cell and the labels. Such an embedding is called an {\it $L$-stratified embedding}. We define $\Disk^{L-\stra}$ to be the symmetric monoidal $\infty$-category associated to the topological category $\text{Disk}^{L-\stra}$ \cite{lurie0}. The symmetric monoidal structure is given by disjoint union.
\end{defn}
}

\begin{defn}
Let $M$ be a stratified surface and let $L_i$ be the set of $i$-cells of $M$ for $i=0,1,2$. We view $M$ as an $L$-stratified surface with each $i$-cell colored by itself. We define $\Disk_M^\stra$ to be the full subcategory of $\EM\text{fld}^{L-\stra}$ consisting of those disjoint unions of standard $L$-stratified 2-disks that admit at least one morphism to $M$.
\end{defn}

\begin{defn}  \label{def:coeff}
Let $\EV$ be a symmetric monoidal $\infty$-category. A {\it coefficient system} on a stratified surface $M$ is a
symmetric monoidal functor $A:\Disk_M^\stra\to \EV$.
\end{defn}

It turns out that a coefficient system $A$ assigns to each $i$-cell of $M$ an $i$-disk algebras in $\EV$. These 0-,1-,2-disk algebras must satisfy additional compatibility conditions in order for $A$ to be a well-defined symmetric monoidal functor. These 0-,1-,2-disk algebras are called the {\it target labels} of $A$. They determine $A$ up to isomorphism. Therefore, we often use target labels to specify a coefficient system in figures.

\begin{figure}
\centerline{
\begin{tabular}{@{}c@{\quad\quad}c@{\quad\quad}c@{}}
\begin{picture}(80, 95)
   \put(10,10){\scalebox{2}{\includegraphics{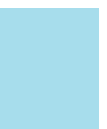}}}
   \put(-30,-54){
     \setlength{\unitlength}{.75pt}\put(-18,-19){
     \put(100, 150)    {$\EA$}
   }\setlength{\unitlength}{1pt}}
  \end{picture}
  &
\begin{picture}(90, 95)
   \put(10,10){\scalebox{2}{\includegraphics{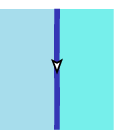}}}
   \put(-30,-54){
     \setlength{\unitlength}{.75pt}\put(-18,-19){
     \put(85, 150)    {$\EA$}
     \put(137, 150)    {$\EB$}
     \put(108,206)     { $\EM$}
   }\setlength{\unitlength}{1pt}}
  \end{picture}
&
  \begin{picture}(150,100)
   \put(10,0){\scalebox{2.2}{\includegraphics{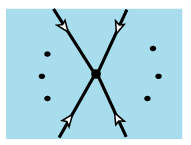}}}
   \put(10,0){
     \setlength{\unitlength}{.75pt}\put(-8,-9){
     \put(93,68)   { $\EP$}
     \put(118, 108)     {$ \EM_1 $}
     \put(37, 108)     {$ \EM_n $}
     \put(36,30)      {$ \EM_i$}
     \put(118,30)      {$\EM_{i-1}$}
     \put(81,115)      {$\EA_0$}
     \put(78,25)      {$\EA_i$}
          }\setlength{\unitlength}{1pt}}
  \end{picture}  \\
  (a) & (b) & (c)
\end{tabular}}
\caption{For a given coefficient system $A$, the values of $A$ on three types of $L$-stratified 2-disks are shown in figures (a), (b) and (c) and discussed in details in Example\,\ref{expl:coeff}. In particular, in figure (c), 2-disk algebras $\EA_0,...,\EA_n=\EA_0$ are assigned to 2-cells, 1-disk algebras $\EM_1, \cdots, \EM_n$ are assigned to 1-cells, and a 0-disk algebra $\EP$ is assigned to the unique 0-cell.}
\label{fig:labeled-disk}
\end{figure}

\begin{expl}  \label{expl:coeff}
We give some examples of the values of the coefficient system $A$ on a few typical objects in $\Disk_M^\stra$.
\bnu
\item The functor $A$ assigns to each colored $\Rb^2$ (see Fig.\,\ref{fig:labeled-disk} (a)) a 2-disk algebra $\EA$ in $\EV$. Moreover, differently colored $\Rb^2$ can be assigned to (not necessarily) different 2-disk algebras in $\EV$.

\item The functor $A$ assigns to each colored $(\Rb^2;\Rb^1)$ (see Fig.\,\ref{fig:labeled-disk} (b)) a 1-disk algebra $\EM$, which is required to have compatible two-side actions of $\EA$ and $\EB$, where $\EA$ and $\EB$ are the 2-disk algebras assigned to the adjacent 2-cells (see Fig.\,\ref{fig:labeled-disk} (b)). Note that our convention of left and right is that if one stands on the 1-cell seeing the arrow pointing towards you, then the left hand side of you is treated as the left. For example, $\EA$ in Fig.\,\ref{fig:labeled-disk} (b) acts on $\EM$ from left. 
In this case, the relation between the 1-disk algebra $\EM$ and the 2-disk algebras $\EA$ and $\EB$ was explained in \cite[Sec.\,2.3]{bbj2} (see also \cite[Prop.\,4.8]{aft-2}). More precisely, let $\EV=\EV_{\mathrm{uty}}$, let $\EA$ and $\EB$ be UMTC's and let $\EM$ be a UMFC. In this case, the compatible two-side actions on $\EM$ of $\EA$ and $\EB$ means that $\EM$ is a multi-fusion $\EA$-$\EB$-bimodule.  

\item The functor $A$ assigns to each colored $(\Rb^2;C(I))$ (see Fig.\,\ref{fig:labeled-disk} (c)) a 0-disk algebra $\EP$ equipped with actions from 1-disk algebras $\EM_1, \cdots, \EM_n$ (assigned to the adjacent 1-cells) and actions from 2-disk algebras $\EA_0, \cdots, \EA_{n-1}$ (assigned to the adjacent 2-cells). A precise statement of these actions is given in Example\,\ref{expl:fh-labeled-disk} (3). 
\enu
\end{expl}


\begin{defn}
Let $\EV$ be a symmetric monoidal $\infty$-category, 
$M$ be a stratified surface, 
and $A:\Disk_M^\stra\to \EV$ be a coefficient system.
The {\it factorization homology} of $M$ with coefficient in $A$ is an object of $\EV$ defined as follows:
$$\int_MA:=\text{Colim}\left((\Disk_M^\stra)_{/M}\to\Disk_{M}^\stra\xrightarrow{A}\EV\right).$$
\end{defn}

\begin{expl} \label{expl:fh-labeled-disk}
We give a few examples of factorization homology.
\begin{itemize}
\item [$(1)$] Let $M_1$ be a stratified 2-disk as depicted in Fig.\,\ref{fig:labeled-disk} (a). If a coefficient system $A_1$ assigns a 2-disk algebra $\EA$ to $M_1$, we have $\int_{M_1}A_1\simeq \EA$ and $\int_{M_1\backslash\{0\}} A_1\simeq HH_{\ast}(\EA)$, where $HH_\ast(\EA)$ is the usual Hochschild homology, i.e. $HH_{\ast}(\EA)=\EA\otimes_{\EA\otimes \EA^\op} \EA$.

\item [$(2)$] Let $M_2$ be a stratified 2-disk as depicted in Fig.\,\ref{fig:labeled-disk} (b). If a coefficient system $A_2$ assigns a 1-disk algebra $\EM$ to the 1-cell. We have $\int_{M_2}A_2\simeq \EM$.

\item [$(3)$] Let $M_3$ be a stratified 2-disk as depicted in Fig.\,\ref{fig:labeled-disk} (c). Suppose a coefficient system $A_3$ assigns a 0-disk algebra $\EP$ to the 0-cell, and assigns $i$-disk algebras to the adjacent $i$-cells for $i=1,2$ as shown in Fig.\,\ref{fig:labeled-disk} (c). Then we have $\int_{M_3}A_3\simeq \EP$ and
\be \label{eq:hh-act-on-P}
\int_{M_3\backslash\{0\}} A_3 \simeq HH_{\ast}(\EA_0,\EM_1\otimes_{\EA_1}...\otimes_{\EA_{n-1}}\EM_n)\simeq \otimes_{\EA_0, \cdots, \EA_{n-1}}^\circlearrowright (\EM_1, \cdots, \EM_n),
\ee
where $\EM_1\otimes_{\EA_1}...\otimes_{\EA_{n-1}}\EM_n$ is an $\EA_0$-$\EA_0$-bimodule, and $HH_{\ast}(\EA_0,\EM_1\otimes_{\EA_1}...\otimes_{\EA_{n-1}}\EM_n)$ is the Hochschild homology of this bimodule. Since $M_3\backslash \{0\} \simeq S^1\times \Rb$, $\int_{M_3\backslash\{0\}} A_3$ has a structure of 1-disk algebra and acts on $\EP$. Our convention is that if the arrows point towards the 0-cell, then we set the action to be the left action. If the arrow on the 1-cell with target label $\EM_i$ in Fig.\,\ref{fig:labeled-disk} (b) is flipped, we need replace $\EM_i$ in Eq.\,(\ref{eq:hh-act-on-P}) by $\EM_i^\rev$, where $\EM_i^\rev$ denotes the opposite 1-disk algebra of $\EM_i$. Therefore, the 0-disk algebra $\EP$ receives a left $\int_{M_3\backslash\{0\}} A_3$-module structure. This module structure condition on $\EP$ is sufficient for the coefficient system $A_3$ to be well-defined, due to the fact that $M_3\backslash\{0\}$ is universal among all embeddings from standard stratified 2-disks without 0-cell into $M_3$. A nice explanation of this fact in a special case ($\EM_i=\EA_j$ for all $i,j$) was given in \cite[Sec.\,3]{bbj2} in terms of the so-called braided module category (see also \cite{ginot}). 

\end{itemize}
\end{expl}

For the factorization homology of stratified manifolds, there is also a version of $\otimes$-excision property and pushforward property \cite[Thm.\,2.25, Cor.\,2.40]{aft-2}. More precisely, Thm.\,\ref{thm:tensor-excision} remains true if the map $M\to[-1,1]$ restricts to a bundle of stratified manifolds over $(-1,1)$. Eq.\,(\ref{eq:pushforward}) is also true for stratified manifolds $M$ and $N$ if the map $f:M\to N$ restricts to a bundle of stratified manifolds over each cell of $N$.

\begin{figure}
\centerline{
\begin{picture}(140, 95)
   \put(10,10){\scalebox{2}{\includegraphics{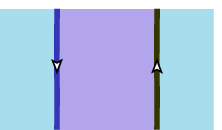}}}
   \put(-30,-54){
     \setlength{\unitlength}{.75pt}\put(-18,-19){
     \put(85, 150)    {$\EC$}
     \put(145, 150)    {$\ED$}
     \put(210, 150)    {$\EE$}
     \put(108,206)     { $\EM$}
     \put(185,206)     { $\EN$}
   }\setlength{\unitlength}{1pt}}
  \end{picture}
}
\caption{A stratified 2-disk $K=(\Rb^2; \Rb \cup \Rb)$ with a coefficient system $A_K$ determined by 2-disk algebras $\EC,\ED,\EE$ for 2-cells and 1-disk algebras $\EM,\EN$ for 1-cells.}
\label{fig:af-1}
\end{figure}

\begin{expl} \label{expl:otimes}
We illustrate the $\otimes$-excision property by a few examples that are sufficient for the purpose of computing factorization homology.
\bnu

\item A stratified 2-disk $K$ with a coefficient system $A_K$ is shown in Fig.\,\ref{fig:af-1}. The 1-disk algebra $\EM$ is a $\EC$-$\ED$-bimodule, and $\EN$ is an $\EE$-$\ED$-bimodule according to our left-right convention.
By the $\otimes$-excision property, we have
\be \label{eq:fh-otimes}
\int_K A_K \simeq \EM \otimes_\ED \EN^\rev.
\ee
Consider a process of fusing these two 1-cells labeled by $\EM$ and $\EN$ into one with a downward arrow and labeled by $\EM\otimes_\ED \EN^\rev$. This process produces a new stratified 2-disk $K'$ and a new coefficient system $A_{K'}$ determined by $\EC,\EM\otimes_\ED \EN^\rev, \EE$. We have $\int_K A_K \simeq \int_{K'} A_{K'}$.

\begin{figure}[bt]
\centerline{
\begin{tabular}{@{}c@{\quad\quad}c}
\raisebox{-0pt}{
  \begin{picture}(200,100)
   \put(20,0){\scalebox{2.2}{\includegraphics{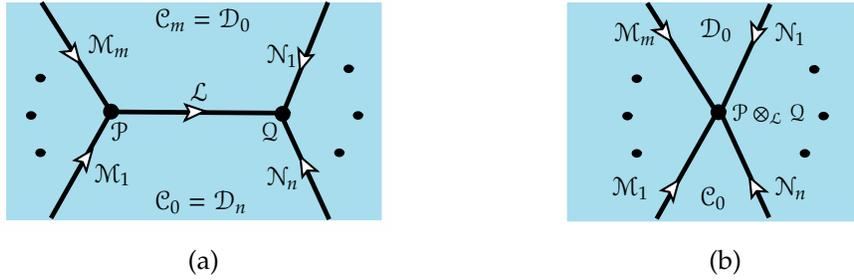}}}
   \put(20,0){
     \setlength{\unitlength}{.75pt}\put(-8,-9){
     \put(145,97)  {$\EN_1$}
     \put(103,78)   { $\EL$}
     \put(85,115) { $\EC_m = \ED_0$}
     \put(85,23)   { $\EC_0 = \ED_{n}$}
     \put(55, 102)     {$ \EM_m $}
     \put(58, 37)     {$ \EM_1 $}
     \put(66,58)      {$\EP$}
     \put(142,58)    {$\EQ$}
     \put(145,35)      {$\EN_n$}
          }\setlength{\unitlength}{1pt}}
  \end{picture}}
&  
\raisebox{-0pt}{
  \begin{picture}(150,100)
   \put(10,0){\scalebox{2.2}{\includegraphics{pic-trees-2.eps}}}
   \put(10,0){
     \setlength{\unitlength}{.75pt}\put(-8,-9){
     \put(80,110)  {$\ED_0$}
     \put(80,25)   {$\EC_0$}
     \put(93,68)   { \small{$\EP\otimes_\EL\EQ$}}
     \put(118, 108)     {$ \EN_1 $}
     \put(37, 108)     {$ \EM_m $}
     \put(36,30)      {$\EM_1$}
     \put(118,30)      {$\EN_n$}
          }\setlength{\unitlength}{1pt}}
  \end{picture}}  \\
  (a) & (b)
\end{tabular}}
\caption{These two figures depict a process of contracting a 1-cell to a 0-cell with proper new target labels such that the value of factorization homology is not changed. Figure (a) depicts a stratified 2-disk $M$ with a coefficient system $A_M$ determined by its target labels; figure (b) depicts another stratified 2-disk $M'$ with a coefficient system $A_{M'}$ determined by its target labels. $M'$ is obtained by contracting the internal edge labeled by $\EL$ in Figure (a) to a point.
}
\label{fig:trees}
\end{figure}

\item Let $M$ be a stratified 2-disk with a coefficient system $A_M$ as shown Fig.\,\ref{fig:trees} (a). In particular, $\EP$ and $\EQ$ are 0-disk algebras in $\EV$; $\EM_i$ and $\EN_j$ are 1-disk algebras; and $\EC_k$ are 2-disk algebras. It is easy to realize $M$ as a collar-gluing of three pieces $M_{[-1,1)}$, $M_{\{0\}}\times \Rb$ and $M_{(-1,1]}$ (recall Def.\,\ref{def:collar}), where $M_{\{0\}}$ is given by $(\Rb,\{0\})$ (a vertical line intersecting the internal edge labeled by $\EL$ in Fig.\,\ref{fig:trees} (a)). According to the $\otimes$-excision property, we have
$$
\int_MA_M\simeq \EP\otimes_{\EL}\EQ.
$$
Consider a process of contracting the internal edge labeled by $\EL$ to a point. It produces a new stratified 2-disk $M'$ with a new coefficient system $A_{M'}$ depicted in Fig.\,\ref{fig:trees} (b).
We have
$$
\int_MA_M \simeq \EP\otimes_{\EL}\EQ \simeq \int_{M'} A_{M'}.
$$

\item 
Let $N$ be a stratified 2-disk with a coefficient system $A_N$ as depicted in Fig.\,\ref{fig:trees} (a). By the same argument as the previous case, we have $$
\int_N A_N \simeq \EP \otimes_{\EK\otimes_\EE \EL} \EQ.
$$
Therefore, $\int_N A_N \simeq \int_{N'} A_{N'}$ where $N'$ is a stratified 2-disk with a coefficient system $A_{N'}$ as depicted in Fig.\,\ref{fig:trees} (b).

\void{
The coefficient system $A_N$ provides a stratified 2-disk $N$ depicted in Fig.\,\ref{fig:loop} (a) with the target labels. To compute the factorization homology $\int_NA$, the $\otimes$-excision property allows us to reduce (a) to (b) with a proper target label $(\EP,p)$ on the unique 0-cell as explained by above example. Moreover, we can squeeze the 2-disk encircled by the loop labeled by $\EM$ in (b) vertically to obtain a 1-cell in a new stratified 2-disk $N'$ depicted in (c). A coefficient system $A_{N'}'$ on $N'$ is determined by the new target labels $\EE=\EM\otimes_\ED \EM^\rev$ and $\EM$, which is the 0-disk algebra obtained by forgetting its 1-disk algebra structure on $\EM$. 
According to the $\otimes$-excision property again, we have
$$
\int_N A_N \simeq \int_{N'} A_{N'}' \simeq \EP \otimes_\EE \EM.
$$

\begin{figure}[bt]
\centerline{
\begin{tabular}{@{}c@{\quad\quad}c@{\quad\quad}c}
\raisebox{-0pt}{
  \begin{picture}(95,100)
   \put(0,0){\scalebox{2}{\includegraphics{pic-loop.eps}}}
   \put(0,0){
     \setlength{\unitlength}{.75pt}\put(-8,-9){
     \put(120,30)  {$\EC$}
     \put(70, 63)     {$ \ED $}
          }\setlength{\unitlength}{1pt}}
  \end{picture}}
  &
  \raisebox{-0pt}{
  \begin{picture}(95,100)
   \put(0,0){\scalebox{2}{\includegraphics{pic-loop-1.eps}}}
   \put(0,0){
     \setlength{\unitlength}{.75pt}\put(-8,-9){
     \put(75,75)   { $\ED$}
     \put(115, 100)     {$ \EC $}
     \put(55,57)      {$\EP$}
     \put(110,39)      {$\EM$}
          }\setlength{\unitlength}{1pt}}
  \end{picture}}
  &
  \raisebox{-0pt}{
  \begin{picture}(95,100)
   \put(0,0){\scalebox{2}{\includegraphics{pic-loop-2.eps}}}
   \put(0,0){
     \setlength{\unitlength}{.75pt}\put(-8,-9){
     \put(100,51)   { $\EM$}
     \put(115, 100)     {$ \EC $}
     \put(47,72)      {$\EP$}
     \put(73,51)      {$\EE$}
          }\setlength{\unitlength}{1pt}}
  \end{picture}}  \\
  (a) & (b) & (c)
\end{tabular}}
\caption{These figures show the processes of reducing a graph with a contractible loop to a graph without loop and providing proper new target labels such that the value of the factorization homology is not changed. In (a), we have a contractible loop with finitely many vertices and external legs. By retracting internal edges on the circle (as depicted in Fig.\,\ref{fig:trees}), we reduce (a) to (b), in which we make some target labels explicit. We obtained (c) by squeezing the 2-disk encircled by the loop in (b) to a line with a new target label given by $\EE=\EM\otimes_\ED \EM^\rev$ and an end point labeled by $\EM$ viewed as a 0-disk algebra.}
\label{fig:loop}
\end{figure}
}

\begin{figure}[bt]
\centerline{
\begin{tabular}{@{}c@{\quad\quad}c}
\raisebox{-0pt}{
  \begin{picture}(200,100)
   \put(20,0){\scalebox{2.2}{\includegraphics{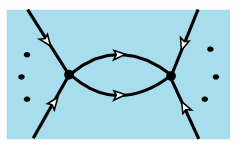}}}
   \put(20,0){
     \setlength{\unitlength}{.75pt}\put(-8,-9){
     \put(145,97)  {$\EN_1$}
     \put(105,95)   { $\EL$}
     \put(103,68)   { $\EE$}
     \put(105,40)   { $\EK$}
     \put(55, 102)     {$ \EM_m $}
     \put(58, 37)     {$ \EM_1 $}
     \put(64,55)      {$\EP$}
     \put(144,55)    {$\EQ$}
     \put(145,35)      {$\EN_n$}
          }\setlength{\unitlength}{1pt}}
  \end{picture}}
&  
\raisebox{-0pt}{
  \begin{picture}(200,100)
   \put(20,0){\scalebox{2.2}{\includegraphics{pic-trees.eps}}}
   \put(20,0){
     \setlength{\unitlength}{.75pt}\put(-8,-9){
     \put(145,97)  {$\EN_1$}
     \put(83,80)   { $\EK\otimes_\EE \EL$}
     \put(55, 102)     {$ \EM_m $}
     \put(58, 37)     {$ \EM_1 $}
     \put(66,58)      {$\EP$}
     \put(142,58)    {$\EQ$}
     \put(145,35)      {$\EN_n$}
          }\setlength{\unitlength}{1pt}}
  \end{picture}} \\
  (a) & (b)
\end{tabular}}
\caption{These two figures depict a process of merging two 1-cells and one 2-cell to a single 1-cell with proper new target labels such that the value of factorization homology is not changed.
}
\label{fig:loop}
\end{figure}

\item The $\otimes$-excision property also allows us to add 0-cells and 1-cells with proper target labels without changing the value of factorization homology.
\bnu

\item On any 1-cell $e$ with the target label $\EM$, we can add a new 0-cell labeled by the 0-disk algebra $\EM$, which is obtained by forgetting its 1-disk algebra structure. Now $e$ breaks into two 1-cells both labeled by $\EM$ and connected by a 0-cell labeled by the 0-disk algebra $\EM$.

\item Between any two 0-cells $p$ and $q$ (not necessarily distinct) on the boundary of a given 2-cell labeled by $\EC$, we can add an oriented 1-cell within this 2-cell from $p$ to $q$ labeled by the 1-disk algebra $\EC$ obtained by forgetting its 2-disk algebra structure.

\enu
These two ways of adding 0-,1-cells with proper target labels allows us to break two adjacent non-contractible loops on a surface into two loops, each of which can be covered by an open 2-disk. We will use this fact in the proof of Thm.\,\ref{thm:higher-g-defects}.

\enu
We have described the application of $\otimes$-excision property to three types of stratified 2-disks. More generally, let $X$ be one of the three stratified 2-disks with the coefficient system $A_X$ described above (i.e. $X=K,M,N$). Let $Y$ be any stratified surface equipped with a coefficient system $A_Y$ such that there is a stratified embedding $f: X \hookrightarrow Y$ that is compatible with the two coefficient systems $A_X$ and $A_Y$. We can replace the image of $f$ in $Y$ by the stratified 2-disk $X'$ (i.e. $X'=K',M',N'$) to obtain a new stratified surface $Y'$ which is identical to $Y$ outside $X'$, i.e. $Y'\backslash X' \cong Y\backslash X$. $Y'$ is equipped with a canonical coefficient system $A_{Y'}$ which is identical to $A_Y$ on $Y'\backslash X'$, and is identical to the coefficient system $A_{X'}$ on $X'$. Then the $\otimes$-excision property implies that $\int_Y A_Y \simeq \int_{Y'} A_{Y'}$.
\end{expl}

\section{Computation of factorization homology}  \label{sec:fh-comp}

In this section, we introduce the notion of an anomaly-free coefficient system and prove our main result.

\subsection{Anomaly-free coefficient systems}

Let $M$ be a stratified surface.
Let $\EV_{\mathrm{uty}}$ be the symmetric monoidal $(2,1)$-category of unitary categories. The following definition is motivated by the notion of an anomaly-free topological order.
\begin{defn} \label{def:af-cond}
A coefficient system $A:\Disk_M^\stra\to \EV_{\mathrm{uty}}$ on $M$ is called {\it anomaly-free} if the following conditions are satisfied:
\bnu
\item The target label for a 2-cell is given by a UMTC;
\item The target label for a 1-cell between two adjacent 2-cells labeled by $\EA$ (left) and $\EB$ (right) is given by a closed multi-fusion $\EA$-$\EB$-bimodule (thus $\EA$ and $\EB$ are Witt equivalent);
\item The target label for a 0-cell as the one depicted in Fig.\,\ref{fig:labeled-disk} (c) is given by a 0-disk algebra $(\EP,p)$, where the unitary category $\EP$ is equipped with the structure of a closed left $\int_{M\backslash\{0\}} A$-module (recall Def.\,\ref{def:closed-module}), i.e.
\be \label{eq:af-cond-3}
\int_{M\backslash\{0\}} A \simeq \cboxtimes_{\EA_0, \cdots,\EA_{n-1}}(\EM_1, \cdots, \EM_n) \simeq \fun_\bk(\EP,\EP).
\ee
According to Cor.\,\ref{cor:0-cell-condition}, $\EP$ is uniquely determined (up to equivalence) by other data.
\enu
\end{defn}

\begin{figure}[bt]
$$
\raisebox{-60pt}{ \begin{picture}(120, 130)
   \put(-40,0){\scalebox{2}{\includegraphics{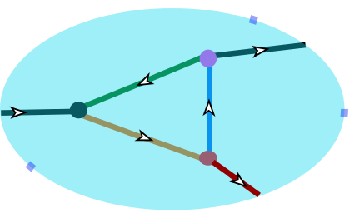}}}
   \put(1,26){
     \setlength{\unitlength}{.75pt}\put(-18,-19){
     \put(-5, 75)        { $(\EP,p) $}
     \put(-28,70)       { $\EI$ }
     \put(93, 10)       { $(\ER,r)$}
     \put(102, 115)      { $(\EQ,q)$}
     \put(154, 118)     {  $\EJ$ }
     \put(64, 28)     { $ \EL $}
     \put(78, 59)     { $ \EC $}
     \put(56, 92)     { $ \EM $}
     \put(128, 62)     { $\EN$ }
     \put(148, 13)     { $\EK$ }
     \put(20,5)      { $\EA$ }
     \put(209,65)   { $\EB$ }
     \put(20,115)   { $\ED$ }
     }\setlength{\unitlength}{1pt}}
  \end{picture}}
$$
\caption{This figure depicts a stratified 2-disk with an anomaly-free coefficient system $A$ determined by its target labels.}
\label{fig:defects}
\end{figure}

\begin{expl}
A stratified 2-disk $M$ is depicted in Fig.\,\ref{fig:defects}. An anomaly-free  coefficient system $A$ on $M$ is determined by its target labels as shown in the figure.
\bnu
\item The target labels for 2-cells: $\EA, \EB,\EC,\ED$ are Witt equivalent UMTC's.

\item The target labels for 1-cells: $\EL$ is a closed multi-fusion $\EA$-$\EC$-bimodule, $\EM$ is a closed multi-fusion $\ED$-$\EC$-bimodule and $\EN$ is a closed multi-fusion $\EB$-$\EC$-bimodule.

\item The target labels for 0-cells:
\bnu
\item $(\EP,p)$ is a closed left module over $\cboxtimes_{\EA,\ED,\EC}(\EI,\EM,\EL^\rev)$;
\item $(\EQ,q)$ is a closed left module over
$\cboxtimes_{\EC,\ED,\EB}(\EM^\rev,\EJ^\rev,\EN)$;
\item $(\ER,r)$ a closed left module over
$\cboxtimes_{\EA,\EC,\EB}(\EL,\EN^\rev,\EK^\rev)$.
\enu
\enu
\end{expl}

We would like to express the data of a coefficient system $A:\Disk_M^\stra\to \EV_{\mathrm{uty}}$ more explicitly. For example, in the case of a stratified 2-disk depicted in Fig.\,\ref{fig:defects}, we denote
$$
A=(\EA,\EB,\EC,\ED; \EI,\EJ,\EK,\EL,\EM,\EN; (\EP,p),(\EQ,q),(\ER,r)).
$$
If there is no $i$-cell, we simply denote the target label for $i$-cell by $\emptyset$. For example, if $M$ is a stratified surface without 1-,0-strata and the target label for the unique 2-cell is $\EC$, we denote $A$ by $(\EC;\emptyset;\emptyset)$.


\begin{rem} \label{rem:0-cell-label}
We would like to point out that the target labels for 0-cells can be more general than the ones appeared in Fig.\,\ref{fig:defects}. We have seen that two 0-cells in a stratified surface $\Sigma$ can be labeled by $(\EP,p)$ and $(\EQ,q)$, respectively. Let $p_1,\cdots, p_k\in \EP$ and $q_1,\cdots, q_k\in \EQ$. Suppose $\int_{\Sigma} (\cdots;\cdots;\cdots, (\EP,p_i), (\EQ,q_i)) = (\EX,x_i)$. We allow the two 0-cells to be simultaneously labeled by $(\EP\boxtimes \EQ, \oplus_{i=1}^k p_i\boxtimes q_i)$. The factorization homology with coefficients including two 0-cells simultaneously labeled by $(\EP\boxtimes \EQ, \oplus_{i=1}^k p_i\boxtimes q_i)$ is defined as follows:
$$
\int_{\Sigma} (\cdots;\cdots;\cdots,(\EP\boxtimes \EQ, \oplus_i p_i\boxtimes q_i)) := (\EX, \oplus_{i=1}^k x_i).
$$
\end{rem}

\void{

\begin{thm}[Existence of the factorization homology] \label{thm:existence}
The {\it factorization homology} of a stratified surface $M$ with an anomaly-free coefficient system $A$ is well-defined as an object of $\EV_{\mathrm{uty}}$. Namely, the following Colim 
$$\int_MA:=\text{Colim}\left((\Disk_M^\stra)_{/M}\to\Disk_{M}^{L-\stra}\xrightarrow{A}\EV_{\mathrm{uty}}\right)$$
exists in $\EV_{\mathrm{uty}}$. 
\end{thm}
\pf

\epf

}

\void{
The following lemma is very useful when we compute factorization homology in Sec.\,\ref{sec:fh-comp}.
\begin{lem}
If we flip the orientations of some 1-cell $e_1, \cdots, e_k$, we obtain a new oriented graph $\Gamma'$. If we also replace their label $\EM_{e_1}, \cdots, \EM_{e_k}$ by $\EM_{e_1}^\rev, \cdots, \EM_{e_k}^\rev$, respectively, and modify 0-cell labels accordingly (see Rem.\,\ref{rem:flip-arrow}), we obtain a new coefficient system $\mathrm{Coeff}_{\Sigma'}'$. We must have
$$
\int_{\Sigma} \mathrm{Coeff}_{\Sigma}
=\int_{\Sigma'} \mathrm{Coeff}_{\Sigma'}'.
$$
\end{lem}
\pf
This following from the $\otimes$-excision property immediately.
\epf
}

\subsection{A few useful mathematical results}

Let $\EC,\ED$ be UMFC's throughout this subsection.

\begin{defn}
Let $\EM$ be a $\EC$-$\ED$-bimodule. We say that $\EM$ is {\em right dualizable}, if there exists a $\ED$-$\EC$-bimodule $\EN$ equipped with bimodule functors $u:\ED\to\EN\boxtimes_\EC\EM$ and $v:\EM\boxtimes_\ED\EN\to\EC$ such that the composed bimodule functors
$$\EM \simeq \EM\boxtimes_\ED\ED \xrightarrow{\id_\EM\boxtimes u} \EM\boxtimes_\ED\EN\boxtimes_\EC\EM \xrightarrow{v\boxtimes\id_\EM} \EC\boxtimes_\EC\EM \simeq \EM,$$
$$\EN \simeq \ED\boxtimes_\ED\EN \xrightarrow{u\boxtimes\id_\EN} \EN\boxtimes_\EC\EM\boxtimes_\ED\EN \xrightarrow{\id_\EN\boxtimes v} \EN\boxtimes_\EC\EC \simeq \EN$$
are isomorphic to the identity functors. In this case, the $\ED$-$\EC$-bimodule $\EN$ is called {\it left dualizable}. The category $\EN$ is called the {\it right dual} of $\EM$, and $\EM$ is called the {\it left dual} of $\EN$.
\end{defn}

\begin{rem}
The right/left dual of a $\EC$-$\ED$-bimodule $\EM$, if exists, is unique up to equivalence.
\void{
in the following sense. Let $(\EN',u':\ED\to\EN'\boxtimes_\EC\EM,v':\EM\boxtimes_\ED\EN'\to\EC)$ be another triple as in the definition. Then there is an equivalence $F:\EN\to\EN'$ of $\ED$-$\EC$-bimodules such that the induced diagrams
$$
\xymatrix{
  & \ED \ar[ld]_{u} \ar[rd]^{u'} \\
  \EN\boxtimes_\EC\EM \ar[rr]^{F\boxtimes\id_\EM} && \EN'\boxtimes_\EC\EM \\
}
\quad\quad
\xymatrix{
  & \EC \\
  \EM\boxtimes_\ED\EN \ar[ru]^{v} \ar[rr]^{\id_\EM\boxtimes F} && \EM\boxtimes_\ED\EN' \ar[lu]_{v'} \\
}
$$
are commutative up to isomorphism. Moreover, the equivalence $F$ is unique up to isomorphism.
}
\end{rem}

The existence of the right/left dual of a $\EC$-$\ED$-bimodule $\EM$ is known \cite{dss}. We give a short proof of this result. In particular, we make the duality functors $u$ and $v$ explicit for the later uses. 
\begin{thm}  \label{thm:dual-mod-cat}
A $\EC$-$\ED$-bimodule $\EM$ has a right dual given by $\EM^\op$ with two duality maps $u$ and $v$ defined as follows:
\begin{align}
u:\,\, &\ED \to \fun_\EC(\EM,\EM)\simeq \EM^\op\boxtimes_\EC\EM, \quad\quad  d\mapsto  -\odot d,    \nn
v:\,\, &\EM\boxtimes_\ED\EM^\op\to \EC,    \quad\quad\quad\quad         x\boxtimes_\ED y \mapsto [x,y]_\EC^\ast.
\label{eq:v}
\end{align}
Since $(\EM^\op)^\op \simeq \EM$ as $\EC$-$\ED$-bimodules, $\EM^\op$ is also the left dual of $\EM$.
\end{thm}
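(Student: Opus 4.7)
The plan is to verify directly that $u$ and $v$ are bimodule functors and satisfy the two triangle (zigzag) identities defining a right duality. The concluding assertion that $\EM^\op$ is also the left dual of $\EM$ then follows formally: applying the first part of the theorem to the $\ED$-$\EC$-bimodule $\EM^\op$ shows that $(\EM^\op)^\op \simeq \EM$ is the right dual of $\EM^\op$, which is by definition equivalent to $\EM^\op$ being the left dual of $\EM$.

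For the bimodule structures, note that the right $\ED$-action on the left $\EC$-module $\EM$ corresponds to a monoidal functor $\ED^\rev \to \fun_\EC(\EM,\EM)$, $d \mapsto (-\odot d)$; composing with the inverse of the equivalence (\ref{eq:xRy}) yields exactly $u$, so $u$ is automatically a $\ED$-$\ED$-bimodule functor. For $v$, unfolding the $\EC$-$\EC$-bimodule structure on $\EM\boxtimes_\ED\EM^\op$---where the right $\EC$-action comes via $y\odot^\ast a = a^\ast \odot y$ on the $\EM^\op$ factor---and applying (\ref{eq:internal-hom}) with $b' = a$, $a' = b^\ast$, one computes
\[
v(a\odot x\boxtimes_\ED y\odot^\ast b) = [a\odot x, b^\ast \odot y]^\ast = (b^\ast\otimes [x,y]\otimes a^\ast)^\ast = a\otimes v(x\boxtimes_\ED y)\otimes b,
\]
confirming the $\EC$-$\EC$-bimodule property.

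The heart of the proof is the following observation for the triangle identities: under the equivalence (\ref{eq:xRy}) (with $\EN=\EM$), the composite $(v\boxtimes \id_\EM)$ followed by $\EC\boxtimes_\EC\EM \simeq \EM$ is the canonical evaluation pairing
\[
\EM\boxtimes_\ED \fun_\EC(\EM,\EM) \to \EM, \qquad m\boxtimes_\ED F \mapsto F(m).
\]
Indeed, a pure tensor $y\boxtimes_\EC z$ corresponds to $F_{y,z} := [-, y]^\ast\odot z$, and $(v\boxtimes \id_\EM)(m\boxtimes_\ED y\boxtimes_\EC z) = [m, y]^\ast\odot z = F_{y,z}(m)$. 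Since $u(\one_\ED) = -\odot\one_\ED$ corresponds to $\id_\EM$, the first triangle identity reduces to $m\mapsto m\boxtimes_\ED \id_\EM \mapsto \id_\EM(m) = m$. An analogous analysis realizes the second triangle identity's second arrow as the dual pairing $\fun_\EC(\EM,\EM) \boxtimes_\ED \EM^\op \to \EM^\op$ sending $(z\boxtimes_\EC x)\boxtimes_\ED y \mapsto [x,y]\odot z$; evaluating at $F = \id_\EM$ should return $y$.

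The subtle point, and main anticipated obstacle, is that $\id_\EM \in \fun_\EC(\EM,\EM)$ is not a single pure tensor in $\EM^\op\boxtimes_\EC\EM$ but a formal colimit of such, so the pointwise computations above must be promoted to equalities of bimodule functors. The cleanest remedy is to invoke the universal property of $\boxtimes_\EC$: a $\EC$-balanced bimodule functor out of $\EM^\op\times\EM$ is determined by its values on pure tensors, so the identities, verified on pure tensors, extend uniquely. Equivalently, one can repackage the two triangle identities as the standard triangle identities for the tensor--hom adjunction $-\boxtimes_\EC\EM \dashv \fun_\EC(\EM,-) \simeq -\boxtimes_\EC\EM^\op$, whose unit and counit are precisely our $u$ and $v$.
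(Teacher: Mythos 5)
Your setup and your first triangle identity agree with the paper's own argument: the bimodule property of $v$ via Eq.\,(\ref{eq:internal-hom}), and the identification, through Eq.\,(\ref{eq:xRy}), of the composite $\EM\boxtimes_\ED\EM^\op\boxtimes_\EC\EM\to\EM$ with the evaluation pairing $m\boxtimes_\ED F\mapsto F(m)$, so that plugging in $u(\one_\ED)\simeq\id_\EM$ gives the identity. The gap is in the second triangle identity, where you write that the pairing, ``evaluated at $F=\id_\EM$, should return $y$.'' That is precisely what has to be proved, and the situation is genuinely asymmetric to the first triangle: there, the pairing literally is ``evaluate the functor,'' so its value at $\id_\EM$ is immediate; here, the pure-tensor formula $(z\boxtimes_\EC x)\boxtimes_\ED y\mapsto [x,y]\odot z$ is \emph{not} ``evaluate $F_{z,x}=[-,z]^\ast\odot x$ at $y$.'' It is ``evaluate the \emph{right adjoint} of $F_{z,x}$ at $y$,'' because $[x,-]\odot z$ is right adjoint to $[-,z]^\ast\odot x$ --- this is exactly the ingredient the paper imports from \cite{kong-zheng} (Eq.\,(2.3) there). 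Only with this intrinsic description of the pairing can one say anything about its value at the non-pure-tensor object $u(\one_\ED)\simeq\id_\EM$: the second composite $\EM^\op\to\EM^\op$ is then recognized as (pointwise) right adjoint to the first composite, which is isomorphic to $\id_\EM$, and is therefore itself isomorphic to the identity. Without some such argument, ``should return $y$'' is unsupported.

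Neither of your proposed remedies closes this gap. The universal property of $\boxtimes_\EC$ lets you compare two functors \emph{out of} $\EM^\op\boxtimes_\EC\EM$ by checking them on pure tensors (which is how the first triangle, and the identification of the second arrow, are justified), but the problem is not a comparison of such functors; it is the computation of the value of one of them at the specific object $\id_\EM$, which is not a pure tensor, and the pure-tensor formula alone does not determine that value without an intrinsic reformulation of the pairing such as the adjoint description above (or an explicit decomposition of $\id_\EM$ together with the functor's action on the relevant morphisms). Likewise, the closing remark that the two identities ``repackage as the tensor--hom adjunction $-\boxtimes_\EC\EM\dashv\fun_\EC(\EM,-)$ whose unit and counit are precisely our $u$ and $v$'' is essentially a restatement of the theorem: that $u$ and $v$ (as bimodule functors) serve as the unit and counit of such an adjunction is the whole content, and no argument is given for it. The final step of your plan --- deducing the left duality from $(\EM^\op)^\op\simeq\EM$ --- is fine and is also how the paper concludes.
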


\begin{proof}
It is clear that $u$ is a $\ED$-$\ED$-bimodule functor. That $v$ is a $\EC$-$\EC$-bimodule functor follows from the identity Eq.\,(\ref{eq:internal-hom}).
To show that $\EM^\op$ is the right dual of $\EM$, first, we consider the following diagram:
\be  \label{eq:duality-1}
\xymatrix{
\EM \simeq \EM\boxtimes_\ED\ED \ar[rr]^-{\id_\EM\boxtimes_\ED u} \ar[drr]_{\id_\EM\boxtimes_\ED u} & & \EM\boxtimes_\ED\EM^\op\boxtimes_\EC\EM \ar[rr]^-{v\boxtimes_\EC\id_\EM} \ar[d]^\simeq & & \EC\boxtimes_\EC\EM \simeq \EM\, . \\
& & \EM\boxtimes_\ED \Fun_\EC(\EM, \EM) \ar[urr]_{x\boxtimes_\ED f \mapsto f(x)}  \ar[u] & &
}
\ee
It is commutative due to Eq.\,(\ref{eq:xRy}). It follows immediately that the composed functor on the first row is isomorphic to $\id_\EM$. Secondly, we consider the following composed functor
\begin{align}
\EM^\op \simeq \ED\boxtimes_\ED\EM^\op \xrightarrow{u\boxtimes_\ED\id_{\EM^\op}} \EM^\op\boxtimes_\EC\EM\boxtimes_\ED\EM^\op  \xrightarrow{\id_{\EM^\op}\boxtimes_\EC v} \EM^\op\boxtimes_\EC\EC \simeq \EM^\op.  \label{eq:duality-2}
\end{align}
Notice that the functor $\id_{\EM^\op}\boxtimes_\EC v$ maps $x\boxtimes_\EC y \boxtimes_\ED z$ to $x\odot^\ast [y,z]^\ast=[y,z]\odot x$. 
According to \cite[Eq.\,(2.3)]{kong-zheng}, the functor $[y,-]\odot x \in \fun_\EC(\EM, \EM)$ is right adjoint to $[-,x]^\ast\odot y$ for $x,y\in\EM$.
Therefore, the composed functor (\ref{eq:duality-2}) is adjoint, hence isomorphic, to the identity functor. 
This shows $\EM^\op$ is the right dual of $\EM$.
\end{proof}

\begin{rem}
With a proper adaption of the left/right duals, the same proof works for $\EC,\ED$ being finite multi-tensor categories and $\EM$ being finite $\EC$-$\ED$-bimodules. 
\end{rem}

\begin{rem} \label{rem:inv-bimod}
A $\EC$-$\ED$-bimodule $\EM$ is called {\it invertible} if both of $u$ and $v$ are bimodule equivalences. In the special case where $\EC,\ED$ are indecomposable, if one of $u$ and $v$ is a bimodule equivalence, so is the other \cite[Prop.\,4.2]{eno2009}. 
\end{rem}

\void{
\begin{lem}
Let $\EM$ be a closed multi-fusion $\EC$-$\ED$-bimodule. We have a canonical monoidal equivalence
\be
\EM \boxtimes_\ED \EM^\rev \xrightarrow{\simeq} \fun_\EC(\EM, \EM) \quad\quad
\mbox{defined by} \quad\quad x\boxtimes_\ED y \mapsto x\otimes - \otimes y.
\ee
\end{lem}
\pf
We have the following natural equivalences
$$
\EC\boxtimes_{Z(\EC)} \left( \EM\boxtimes_\ED \EM^\rev \right) \simeq \EC\boxtimes_{\EC\boxtimes \overline{\EC}} \left( \EM\boxtimes_\ED \EM^\rev \right) \simeq \EM \boxtimes_{\overline{\EC} \boxtimes \ED} \EM^\rev \simeq \EM\boxtimes_{Z(\EM)} \EM^\rev \simeq \fun_\bk(\EM, \EM),
$$
where we have used Eq.\,(\ref{eq:CC=FCC}). Moreover, it maps
$$
c\boxtimes_{Z(\EC)} (x\boxtimes_\ED y) \mapsto (c\odot x) \boxtimes_{Z(\EM)} y \mapsto (c\odot x)\otimes - \otimes y.
$$
Therefore, we obtain the following canonical equivalences 
\begin{align} \label{eq:xxx}
\EM\boxtimes_\ED \EM^\rev & \xrightarrow{\simeq} Z(\EC)\boxtimes_{Z(\EC)} \left( \EM\boxtimes_\ED \EM^\rev \right) \simeq \fun_{\EC|\EC}(\EC,\EC)\boxtimes_{Z(\EC)} \left( \EM\boxtimes_\ED \EM^\rev \right) \nn
&\xrightarrow{\simeq} \fun_{\EC|\EC} \left( \EC, \EC\boxtimes_{Z(\EC)}\left( \EM\boxtimes_\ED \EM^\rev\right) \right) \nn
&\xrightarrow{\simeq} \fun_{\EC|\EC} \left( \EC, \fun_\bk(\EM,\EM)\right) \nn
&\xrightarrow{\simeq} \fun_\EC(\EM,\EM)
\end{align}
defined by $x\boxtimes_\ED y \mapsto \id_\EC \boxtimes_\EC (x\boxtimes_\ED y) \mapsto \id_\EC(-) \boxtimes_{Z(\EC)} x\boxtimes_\ED y \mapsto \id_\EC(-) \odot x \otimes - \otimes y \mapsto x\otimes - \otimes y$. The third $\simeq$ in Eq.\,(\ref{eq:xxx}) was proved in \cite[Lem.\, 5.4]{kong-zheng}.  
\epf
}

\subsection{Closed stratified surfaces without 1-stratum} \label{sec:fh-1}
In this subsection, we compute the factorization homology of any closed stratified surfaces without 1-stratum.

\void{
\medskip
When $\EC$ is a UMTC, we have $Z(\EC) \simeq \EC\boxtimes\overline{\EC}$ and $\EC \simeq \EC^\rev$ (via the braidings). Therefore, as a special case of Eq.\,(\ref{eq:CC=FCC}), we obtain a canonical monoidal equivalence:
$$
\EC\boxtimes_{\EC\boxtimes \overline{\EC}}\EC \simeq \fun_\bk(\EC,\EC). 
$$
}

\begin{figure}[bt]
\centerline{
\begin{tabular}{@{}c@{\quad\quad\quad\quad\quad\quad\quad\quad\quad}c@{}}
\raisebox{5pt}{
  \begin{picture}(95,100)
   \put(0,0){\scalebox{2.5}{\includegraphics{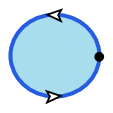}}}
   \put(0,0){
     \setlength{\unitlength}{.75pt}\put(0,0){
     \put(96,45)   { $(\EC,x_1\otimes \cdots \otimes x_n)$}
     \put(42,97)     {$ \EC $}
     \put(45,-5)     {$ \EC $}
     \put(35,45)      {$Z(\EC)$}
          }\setlength{\unitlength}{1pt}}
  \end{picture}}
  &
\raisebox{33pt}{
  \begin{picture}(95,100)
   \put(0,0){\scalebox{2.5}{\includegraphics{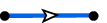}}}
   \put(0,0){
     \setlength{\unitlength}{.75pt}\put(0,0){
     \put(85,-7)   { $(\EC,x_1\otimes \cdots\otimes x_n)$}
     \put(20,22)     {$ \fun_\bk(\EC,\EC) $}
     \put(-30,-7)      {$(\EC,\one)$}
          }\setlength{\unitlength}{1pt}}
  \end{picture}}
  \\
  (a) & (b)
  \end{tabular}}
\caption{
These two pictures depicts the two steps in computing the factorization homology of a sphere with the coefficient system determined by a single UMTC $\EC$.
}
\label{fig:2sphere}
\end{figure}

\begin{thm} \label{thm:genus-0}
Let $\EC$ be a UMTC and $x_1, \dots, x_n \in \EC$. Consider the stratified sphere $S^2$ without 1-stratum but with finitely many 0-cells $p_1, \cdots, p_n$. Suppose a coefficient system assigns $\EC$ to the unique 2-cell and assigns $(\EC, x_1), \dots, (\EC, x_n)$ to the 0-cells $p_1, \dots, p_n$, respectively. We have
\be \label{eq:g=0}
\int_{(S^2; \emptyset; p_1, \cdots, p_n)} (\EC; \emptyset; (\EC,x_1), \dots, (\EC, x_n)) \simeq (\bk, \hom_\EC(\one_\EC, x_1\otimes \dots \otimes x_n)).
\ee
\end{thm}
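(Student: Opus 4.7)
The strategy is to apply the $\otimes$-excision property along an equatorial circle of $S^2$, reducing the computation to a relative tensor product of the factorization homologies of the two hemispheres over the algebra assigned to the equatorial collar.

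By isotopy-invariance of factorization homology I first arrange all marked points $p_1, \ldots, p_n$ to lie in one open hemisphere $D_+$, leaving the other hemisphere $D_-$ empty; the collar-gluing $S^2 \cong D_+ \cup_{S^1 \times \Rb} D_-$ along the equator then applies. Using the pushforward along the projection $S^1 \times \Rb \to S^1$ combined with Eq.\,(\ref{eq:hh-A}), the collar algebra is
$$\int_{S^1 \times \Rb} \EC \,\simeq\, \EC \otimes_{\EC \otimes \EC^\rev} \EC \,\simeq\, Z(\EC) \,\simeq\, \EC \boxtimes \overline{\EC},$$
where the final step uses modularity of $\EC$. The empty hemisphere gives $\int_{D_-} \EC \simeq (\EC, \one_\EC)$ by Example\,\ref{expl:fh-labeled-disk}(1); iterated $\otimes$-excision on $D_+$ (separating the marked points one at a time by strips $I \times \Rb \cong \Rb^2$, whose collar algebra is just $\EC$) fuses the marked points via the $\EC$-action on the disk and yields $\int_{D_+} A \simeq (\EC, x_1 \otimes \cdots \otimes x_n)$. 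Assembling, $\otimes$-excision produces
$$\int_{S^2} A \,\simeq\, \EC \boxtimes_{Z(\EC)} \EC$$
with distinguished object $(x_1 \otimes \cdots \otimes x_n) \boxtimes_{Z(\EC)} \one_\EC$; this is the step depicted in Fig.\,\ref{fig:2sphere}(a).

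The main step is then to establish a canonical equivalence $\EC \boxtimes_{Z(\EC)} \EC \simeq \bk$ sending $x \boxtimes_{Z(\EC)} y$ to $\hom_\EC(\one_\EC, x \otimes y)$. This is essentially the invertibility of $\EC$ as a module over its Drinfeld center $Z(\EC) \simeq \EC \boxtimes \overline{\EC}$, a well-known hallmark of modularity. To obtain the equivalence explicitly I would specialize Thm.\,\ref{thm:dual-mod-cat} with $\EC$ playing the role of the module $\EM$: the duality map $v$ of Eq.\,(\ref{eq:v}) produces the pairing $\EC \boxtimes_{Z(\EC)} \EC \to \bk$, and a direct manipulation using the internal-hom identity Eq.\,(\ref{eq:internal-hom}) identifies its value on $x \boxtimes_{Z(\EC)} y$ with $\hom_\EC(\one_\EC, x \otimes y)$. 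The main obstacle will be carefully tracking the left/right $Z(\EC)$-module structures on the two copies of $\EC$ coming from the two hemispheres, and verifying that the pairing really is an equivalence; the latter reduces to the non-degeneracy condition $\EC' \simeq \bk$ defining a UMTC. Specializing to $y = \one_\EC$ and $x = x_1 \otimes \cdots \otimes x_n$ then delivers the theorem, and in particular recovers $(\bk, \Cb)$ for the bare $n = 0$ sphere.
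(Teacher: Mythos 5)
Your overall strategy (excise along the equator, identify the collar algebra, collapse the relative tensor product by a duality pairing, then specialize) is the right shape, but the central identification is wrong. The algebra over which $\otimes$-excision glues the two hemispheres is $\int_{S^1\times\Rb}\EC$ with the $E_1$-structure given by stacking annuli along the $\Rb$-direction. Your pushforward to $S^1$ together with Eq.\,(\ref{eq:hh-A}) only computes the \emph{underlying category} $\EC\boxtimes_{\EC\boxtimes\EC^\rev}\EC$, which is indeed equivalent to $Z(\EC)$ as a plain category (via Eq.\,(\ref{eq:xRy}) and $\EC\simeq\EC^\op$); it does not exhibit the stacking monoidal structure. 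That structure is \emph{not} the braided tensor product of $Z(\EC)$: it is the multi-fusion ``matrix'' structure, $\int_{S^1\times\Rb}\EC\simeq\EC\boxtimes_{Z(\EC)}\EC\simeq\fun_\bk(\EC,\EC)\simeq\EC^\op\boxtimes\EC$, whose unit $\id_\EC\simeq\oplus_{i\in\mO(\EC)}i\boxtimes i^\ast$ is not simple (this is exactly what the paper records as $\int_{S^1\times\Rb}\EC\simeq\fun_\bk(\EC,\EC)$ in the proof of the genus-$g$ theorem, and what Lem.\,\ref{lem:M-cylinder} gives with $\EM=\EC$). Modularity enters through $Z(\EC)\simeq\overline\EC\boxtimes\EC$ in identifying this annular algebra, not by making the collar algebra equal to $Z(\EC)$.

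Because of this, your ``main step'' is false as stated: there is no equivalence $\EC\boxtimes_{Z(\EC)}\EC\simeq\bk$, however you arrange the left/right module structures. By the paper's own Cor.\,\ref{cor:tensor-fun} (Eq.\,(\ref{eq:CC=FCC})), $\EC\boxtimes_{Z(\EC)}\EC^\rev\simeq\fun_\bk(\EC,\EC)$, a category with $|\mO(\EC)|^2$ simple objects, so the would-be pairing cannot be an equivalence, and ``invertibility of $\EC$ over its Drinfeld center'' does not yield what you need here. The collapse to $\bk$ happens one step later: the hemispheres are glued over the annular algebra $\fun_\bk(\EC,\EC)$, and it is the invertibility of $\EC$ as a $\bk$-$\fun_\bk(\EC,\EC)^\rev$-bimodule (Rem.\,\ref{rem:inv-bimod}), together with the duality functor $v$ of Thm.\,\ref{thm:dual-mod-cat} (Eq.\,(\ref{eq:v})), that gives $\EC\boxtimes_{\fun_\bk(\EC,\EC)}\EC\simeq\bk$ with $x\boxtimes y\mapsto\hom_\EC(x^\ast,y)$; plugging in $x=\one_\EC$, $y=x_1\otimes\cdots\otimes x_n$ then yields Eq.\,(\ref{eq:g=0}). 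So to repair your argument you must insert the identification of the collar algebra as $\fun_\bk(\EC,\EC)$ (e.g.\ by the paper's folding trick, which pushes $S^2$ onto a closed 2-disk with bulk label $\overline\EC\boxtimes\EC\simeq Z(\EC)$ and boundary label $\EC$, or by Lem.\,\ref{lem:M-cylinder}), and then take the relative tensor product over that multi-fusion category rather than over $Z(\EC)$.
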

\pf
First we project the stratified sphere to a closed stratified 2-disk as depicted in Fig.\,\ref{fig:2sphere} (a) such that the projection restricts to a 1-,2-fold covering over the 1-,2-cell, respectively, and maps all the points $p_1, \dots, p_n$ to the 0-cell. Applying the pushforward property and the $\otimes$-excision property, we reduce the problem to the computation of the factorization homology of the stratified 2-disk depicted in Fig.\,\ref{fig:2sphere} (a). Note that $\overline{\EC}\boxtimes\EC \simeq Z(\EC)$.

Then we apply the $\otimes$-excision property to complete the computation. That is, we further project the stratified 2-disk vertically onto the closed interval $[-1,1]$ as depicted in Fig.\,\ref{fig:2sphere} (b). Note that $\EC\boxtimes_{Z(\EC)}\EC^\rev \simeq \fun_\bk(\EC,\EC)$ and the induced left action of $\fun_\bk(\EC,\EC)$ on $\EC$ is the obvious one by Cor.\,\ref{cor:tensor-fun}. Moreover, since the duality functor $\delta: \fun_\bk(\EC,\EC)^\rev \to \fun_\bk(\EC,\EC)^\op$ carries $a\otimes-\otimes b$ to $b^\ast\otimes-\otimes a^\ast$ for $a,b\in\EC$, the induced right action of $\fun_\bk(\EC,\EC)$ on $\EC$ coincides with the right action of $\fun_\bk(\EC,\EC)$ on $\EC^\op$ if we identify $\EC$ with $\EC^\op$ via duality.

The final result is expressed as a tensor product:
$$
\int_{(S^2; \emptyset; p_1, \cdots, p_n)} (\EC; \emptyset;\, (\EC,x_1), \dots, (\EC, x_n)) \simeq \left(\EC \boxtimes_{\fun_\bk(\EC,\EC)} \EC, \,\,\, \one_\EC\boxtimes_{\fun_\bk(\EC,\EC)} (x_1\otimes \cdots \otimes x_n)\right).
$$
Notice that $\EC$ is an invertible $\bk$-$\fun_\bk(\EC,\EC)^\rev$-bimodule (see Rem.\,\ref{rem:inv-bimod}). We have canonical equivalences $\EC \boxtimes_{\fun_\bk(\EC,\EC)} \EC \simeq \EC^\op \boxtimes_{\fun_\bk(\EC,\EC)} \EC \simeq \EC \boxtimes_{\fun_\bk(\EC,\EC)^\rev} \EC^\op \simeq \bk$, where the first equivalence is given by the duality $\EC\simeq\EC^\op$ and the last one is defined in Eq.\,(\ref{eq:v}). The composed equivalence maps as follows:
\be \label{eq:2-sphere-pf}
x\boxtimes_{\fun_\bk(\EC,\EC)} y \mapsto [y,x^\ast]_\bk^\ast\simeq \hom_\EC(y,x^\ast)^\ast \simeq \hom_\EC(x^\ast,y).
\ee
Plugging in $x=\one_\EC$ and $y=x_1\otimes \cdots \otimes x_n$ in Eq.\,(\ref{eq:2-sphere-pf}), we obtain Eq.\,(\ref{eq:g=0}).
\epf

\void{
\begin{rem}
It is certainly not necessary to fuse all $x_1, \cdots, x_n$ to the right point in Fig.\,\ref{fig:2sphere}. If we fuse some of them to the left point, we need first use $\delta: \EC \to \EC^\op$ to map them to $\EC^\op$, then apply Eq.\,(\ref{eq:2-sphere-pf}) and the result will be the same.
\end{rem}
}

\begin{thm} \label{thm:high-genus}
Let $\EC$ be a UMTC and $x_1, \dots, x_n \in \EC$. Let $\Sigma_g$ be a closed stratified surface of genus $g$ without 1-stratum but with finitely many 0-cells $p_1, \cdots, p_n$. Suppose a coefficient system assigns $\EC$ to the unique 2-cell and assigns $(\EC, x_1), \dots, (\EC, x_n)$ to the 0-cells $p_1, \dots, p_n$, respectively. We have
\be \label{eq:high-genus}
\int_{(\Sigma_g;\emptyset;p_1,\cdots, p_n)} (\EC;\emptyset; (\EC,x_1), \cdots, (\EC,x_n)) \simeq \left( \bk, \hom_\EC(\one_\EC, x_1 \otimes \cdots \otimes x_n \otimes (\oplus_{i\in \mO(\EC)} i\otimes i^\ast)^{\otimes g} \right)
\ee
\end{thm}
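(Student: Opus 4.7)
The plan is to proceed by induction on the genus $g$. The base case $g=0$ is Thm.\,\ref{thm:genus-0}. For the inductive step, I realize $\Sigma_g$ as the connect sum $\Sigma_{g-1}\#T^2$ and choose a separating simple closed curve $\gamma$ so that all the 0-cells $p_1,\ldots,p_n$ lie in the $\Sigma_{g-1}$-side. Applying the $\otimes$-excision property for stratified manifolds (Sec.\,\ref{sec:fh-2}) along $\gamma\times\Rb$, and using $\int_{S^1\times\Rb}\EC\simeq HH_\ast(\EC)\simeq Z(\EC)\simeq \EC\boxtimes\overline{\EC}$, I reduce to
\[
\int_{\Sigma_g}(\EC;\emptyset;(\EC,x_1),\ldots,(\EC,x_n)) \;\simeq\; \int_{\Sigma_{g-1}^{\circ}}(\EC;\emptyset;(\EC,x_1),\ldots,(\EC,x_n)) \;\boxtimes_{Z(\EC)}\; \int_{T^{2,\circ}}\EC,
\]
where $\Sigma_{g-1}^{\circ}=\Sigma_{g-1}\setminus D$ and $T^{2,\circ}=T^2\setminus D$ are the two pieces carrying collars of $\gamma$.

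The heart of the argument is the identification $\int_{T^{2,\circ}}\EC \simeq (\EC,F)$ as left $Z(\EC)$-modules, where $F=\oplus_{i\in\mO(\EC)}i\otimes i^\ast$ and $Z(\EC)\simeq\EC\boxtimes\overline{\EC}$ acts on $\EC$ through the forgetful functor $Z(\EC)\to\EC$. Granting this, Example\,\ref{expl:fh-labeled-disk}(3) identifies $(\EC,F)$ with $\int_{D^{2,\circ}}(\EC;\emptyset;(\EC,F))$ as $Z(\EC)$-modules, so $\otimes$-excision run in reverse rewrites the right-hand side above as $\int_{\Sigma_{g-1}}(\EC;\emptyset;(\EC,x_1),\ldots,(\EC,x_n),(\EC,F))$. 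The inductive hypothesis for genus $g-1$ with the resulting $n+1$ 0-cells then delivers
\[
\bigl(\bk,\;\hom_\EC(\one,\,x_1\otimes\cdots\otimes x_n\otimes F\otimes F^{\otimes(g-1)})\bigr) = \bigl(\bk,\;\hom_\EC(\one,\,x_1\otimes\cdots\otimes x_n\otimes F^{\otimes g})\bigr),
\]
as required. Note that $(\EC,F)$ is a legitimate anomaly-free label at an isolated 0-cell inside a 2-cell labeled $\EC$, since $\fun_\bk(\EC,\EC)\simeq\EC\boxtimes\overline{\EC}\simeq Z(\EC)$ matches the local link computation in Def.\,\ref{def:af-cond}.

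The main obstacle is establishing the key lemma $\int_{T^{2,\circ}}\EC\simeq(\EC,F)$. My approach is to cut $T^{2,\circ}$ once more along a meridian $m$ of the handle, presenting $T^{2,\circ}$ as a pair-of-pants $P\cong S^2\setminus(3\,\text{disks})$, with boundary circles $\partial T^{2,\circ}\sqcup m\sqcup m$, self-glued along the two copies of $m$ via a cylinder $C=S^1\times[0,1]$. A second application of $\otimes$-excision gives $\int_{T^{2,\circ}}\EC\simeq \int_P\EC \,\boxtimes_{Z(\EC)\boxtimes Z(\EC)}\, \int_C\EC$, where $\int_C\EC\simeq Z(\EC)$ is the diagonal (identity) $Z(\EC)$-$Z(\EC)$-bimodule; so the computation amounts to taking the cyclic trace of $\int_P\EC$ along two of its three boundary $Z(\EC)$-actions. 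Capping the three boundaries of $P$ by disks with marked points $(\EC,x),(\EC,y),(\EC,z)$ and invoking Thm.\,\ref{thm:genus-0} to obtain $(\bk,\hom_\EC(\one,x\otimes y\otimes z))$, I identify $\int_P\EC$ with $\EC$ equipped with three forgetful $Z(\EC)$-actions. The diagonal trace over two of these $Z(\EC)$-factors is precisely the categorical coend $\oint^{i\in\EC} i\otimes i^\ast=F$, and the remaining boundary $Z(\EC)$-action is the forgetful one. This yields $\int_{T^{2,\circ}}\EC\simeq(\EC,F)$ as $Z(\EC)$-modules and completes the induction.
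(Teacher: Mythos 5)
Your overall strategy (induct on genus, cut along a separating curve, and prove the key lemma that the once-punctured torus contributes $(\EC,\,F)$ with $F=\oplus_{i\in\mO(\EC)}i\otimes i^\ast$ and the canonical $Z(\EC)$-action) would indeed give the theorem, and the key lemma itself is a true statement. But your proposed proof of that lemma has a genuine error: the identification $\int_P\EC\simeq\EC$ ``with three forgetful $Z(\EC)$-actions'' for the pair of pants $P$ is false, and the justification offered (matching the capped invariants $\hom_\EC(\one,x\otimes y\otimes z)$ from Thm.\,\ref{thm:genus-0}) cannot determine $\int_P\EC$ as a module category in the first place. A concrete consistency check kills it: gluing a disk onto one boundary circle of $P$ yields an annulus, so excision forces $\int_P\EC\boxtimes_{Z(\EC)}\EC\simeq\int_{S^1\times\Rb}\EC\simeq\fun_\bk(\EC,\EC)$, a category with $|\mO(\EC)|^2$ simples; but if $\int_P\EC$ were $\EC$ with the canonical (closed-module) $Z(\EC)$-action on that leg, then, $\EC$ being an invertible $\bk$-$Z(\EC)$-bimodule, one would get $\EC\boxtimes_{Z(\EC)}\EC\simeq\fun_{Z(\EC)}(\EC^\op,\EC)\simeq\bk$, which has a single simple object --- a contradiction unless $\EC\simeq\bk$. (The same rank bookkeeping shows $\int_P\EC$ must have $|\mO(\EC)|^3$ simples, so it cannot be $\EC$.) Consequently the subsequent ``diagonal trace equals the coend $F$'' step rests on a wrong premise, and the heart of your argument --- establishing $\int_{T^{2,\circ}}\EC\simeq(\EC,F)$ --- is not actually proved.

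Note that the paper's own proof avoids the punctured torus and the pair of pants altogether: it cuts along a \emph{non-separating} curve, uses the annulus computation $\int_{S^1\times\Rb}\EC\simeq\fun_\bk(\EC,\EC)\simeq\EC^\op\boxtimes\EC$ \emph{as a bimodule over itself} (via Eq.\,(\ref{eq:xRy}) and Rem.\,\ref{rem:tensor-bimod}), under which $\id_\EC$ corresponds to $\oplus_i i\boxtimes i$, and then replaces the cylinder by two open 2-disks whose two new 0-cells are \emph{simultaneously} labeled by $(\EC\boxtimes\EC,\oplus_i i\boxtimes i^\ast)$ in the sense of Rem.\,\ref{rem:0-cell-label} (see also Rem.\,\ref{rem:red-genus}); this lowers the genus by one while staying within Thm.\,\ref{thm:genus-0}'s reach. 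To repair your route you would either need an honest computation of $\int_{T^{2,\circ}}\EC$ together with its annulus-module structure (not deducible from capped sphere invariants), or switch to the non-separating cut, where the only input is the annulus bimodule computation already available in the paper.
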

\pf
Notice that Eq.\,(\ref{eq:high-genus}) holds for $g=0$ by Thm.\,\ref{thm:genus-0}. Now we assume $g>0$.
The proof of Thm.\,\ref{thm:genus-0} implies that $\int_{S^1\times \Rb} \EC\simeq \fun_\bk(\EC, \EC)$. By Eq.\,(\ref{eq:xRy}) and Rem.\,\ref{rem:tensor-bimod}, we have $\fun_\bk(\EC, \EC) \simeq \EC^\op\boxtimes\EC$ as $\fun_\bk(\EC, \EC)^\rev$-$\fun_\bk(\EC, \EC)^\rev$-bimodules, under which $\id_\EC \simeq \oplus_{i\in\mO(\EC)}\hom_\EC(-,i)^\ast\otimes i$ is mapped to $\oplus_{i\in\mO(\EC)} i\boxtimes i$.
\void{
an object $X\in \EC\boxtimes \EC^\op$. We have
\begin{align}
\hom_{\EC\boxtimes\EC^\op}(c\boxtimes d, X) &\simeq \hom_{\fun_\bk(\EC,\EC)}(\hom_\EC(-,d)^\ast \odot c, \id_\EC) \nn
&\simeq \oplus_{i\in \mO(\EC)} \hom_\EC\left( \hom_\EC(i,d)^\ast \odot c, i \right) \nn
&\simeq \oplus_{i\in \mO(\EC)} \hom_{\EC^\op}(c,i) \otimes_\Cb \hom_\EC(d,i),
\end{align}
where the second $\simeq$ is due to the semi-simpleness of $\EC$. This implies $X\simeq \oplus_i i\boxtimes i$ as objects. 
}
Therefore, we have $\int_{S^1\times \Rb} \EC\simeq (\EC\boxtimes \EC^\op, \oplus_i i\boxtimes i)$ as $\fun_\bk(\EC, \EC)$-$\fun_\bk(\EC, \EC)$-bimodules. As a consequence, when we compute factorization homology, we can replace a cylinder $S^1\times \Rb$ by two open 2-disks each equipped with a 0-cell, both of which are simultaneously labeled by $(\EC\boxtimes \EC,\oplus_i i \boxtimes i^\ast)$ (recall Rem.\,\ref{rem:0-cell-label}). Notice that we have applied $\delta: \EC^\op \to \EC$ on the second factor. In this way, we reduced the genus by one. By induction, we obtain Eq.\,(\ref{eq:high-genus}) immediately.
\epf

\begin{rem} \label{rem:benzvi}
This remark is due to David Ben-Zvi. Actually, Lurie has shown that an $E_k$ algebra in any symmetric monoidal higher category is $k$-dualizable \cite[Claim\,4.1.14]{lurie-tft} and defines a framed $k$D TQFT. Moreover, the invariants of manifolds of dimension at most $k$ are determined by factorization homology \cite[Thm.\,4.1.24]{lurie-tft}. Freed and Teleman have proved that a modular tensor category is 4-dualizable and defines an invertible 4D TQFT \cite{ft} (see also \cite{schommer-pries}). In particular, the categories associated to surfaces are equivalent to $\bk$ \cite{ft}. Therefore, the first data $\bk$ in Eq.\,(\ref{eq:high-genus}) follows from these results. 
\end{rem}

\begin{rem}
It is not surprising that the distinguished object of $\bk$ in Eq.\,(\ref{eq:high-genus}) is nothing but the Hilbert space that is assigned to the surface $\Sigma$ in the Reshetikhin-Turaev 2+1D TQFT \cite{rt,turaev} determined by $\EC$. This result was vaguely alluded in the context of cobordism hypothesis in \cite{ft} by viewing the TQFT as a relative theory \cite{freed}. It is also known that this state space is the ground state degeneracy (GSD) of the topological order associated to $\EC$. For the explicit computation of GSD from concrete lattice models, one can consult \cite{hsw} and references therein. We also want to remark that the unitarity does not play any role here. Thm.\,\ref{thm:high-genus} holds for any modular tensor category $\EC$. For unitary braided fusion categories that are not non-degenerate, the values of the factorization homology cannot be $\bk$ (see examples in \cite{bbj1}) because the associated 3d bulks are non-trivial. 
\end{rem}

\subsection{Closed stratified surfaces with 1-stratum}

\begin{lem} \label{lem:stra-af}
For any stratified surface $\Sigma$ with an anomaly-free coefficient system $A$, any one of the processes described in Example\,\ref{expl:otimes} (1),(2),(3),(4) produces a new stratified surface $\Sigma'$ and a new coefficient system $A'$. We have 
\be \label{eq:fh-pfd}
\int_{\Sigma} A \simeq \int_{\Sigma'} A'.
\ee
Moreover, the new coefficient system $A'$ is also anomaly-free. 
\end{lem}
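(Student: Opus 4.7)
My plan has two parts: establishing the equivalence of factorization homologies, and verifying that anomaly-freeness is preserved. The first part is essentially free: each of the four processes in Example\,\ref{expl:otimes} is set up precisely as a local application of $\otimes$-excision (Thm.\,\ref{thm:tensor-excision}) on a collar-gluing neighborhood in $\Sigma$ whose complement is left untouched in $\Sigma'$. As the last paragraph of Example\,\ref{expl:otimes} already observes, replacing the image of a stratified embedding $X\hookrightarrow\Sigma$ by a different local stratified 2-disk $X'$ with a compatible coefficient system gives a canonical equivalence $\int_\Sigma A \simeq \int_{\Sigma'} A'$. So the real work is the anomaly-freeness of $A'$, and I would go through the three conditions in Def.\,\ref{def:af-cond} cell by cell for each of the four processes.

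For processes (1) and (3), the only new piece of data is a 1-cell whose target label is a tensor product, over some $Z(\ED)$ or $Z(\EE)$, of two closed multi-fusion bimodules between UMTC's. That such a tensor product is again a closed multi-fusion bimodule is exactly Thm.\,\ref{thm:closed}; no further work is needed, and 0-cells that were not touched keep their closed-module structures intact because $\int_{M\setminus\{0\}} A$ is unchanged (up to canonical equivalence) by the same local $\otimes$-excision reasoning.

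For process (4), I would treat the two sub-cases separately. In (4)(b), the new 1-cell has target label $\EC$ (the 2-disk algebra viewed as a $\EC$-$\EC$-bimodule), and closedness is immediate: when $\EC$ is a UMTC, $Z(\EC)\simeq\overline\EC\boxtimes\EC$, and the tautological functor $\overline\EC\boxtimes\EC\to Z(\EC)$ is a braided monoidal equivalence. In (4)(a), the new 0-cell on a 1-cell labeled by $\EM$ (a closed multi-fusion $\EA$-$\EB$-bimodule) is labeled by $(\EM,\one_\EM)$, and the cyclic tensor product computing $\int_{M'\setminus\{0'\}}A'$ is $\EM\boxtimes_{\overline\EA\boxtimes\EB}\EM^\rev$. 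The required identification of this with $\fun_\bk(\EM,\EM)$ is a direct application of Cor.\,\ref{cor:tensor-fun}, using the fact that closedness of $\EM$ gives $Z(\EM)\simeq \overline\EA\boxtimes\EB$.

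The main obstacle, and the step I would spend the most care on, is process (2), the contraction of an internal 1-cell labeled $\EL$ to a point. Here the new 0-cell receives the label $(\EP\boxtimes_\EL\EQ,\,p\boxtimes_\EL q)$, and one must exhibit a closed module structure on $\EP\boxtimes_\EL\EQ$ over the cyclic tensor product $\int_{M'\setminus\{0'\}} A'$. The plan is to compute this cyclic tensor product using Eq.\,(\ref{eq:MMNN}) to split off the piece that used to surround each of the two original 0-cells, then use the hypothesis that $\EP\simeq\fun_\bk(\EP_0,\EP_0)$-modules and $\EQ\simeq\fun_\bk(\EQ_0,\EQ_0)$-modules to rewrite the whole thing as $\fun_\bk(\EP_0\boxtimes_\EL\EQ_0,\,\EP_0\boxtimes_\EL\EQ_0)$ up to a further application of Thm.\,\ref{thm:closed} to absorb the contracted $\EL$. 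This identification then gives the required closedness for the new 0-cell, and the uniqueness clause of Cor.\,\ref{cor:0-cell-condition} shows the candidate is the right one. Once process (2) is handled, the rest of the verification is bookkeeping.
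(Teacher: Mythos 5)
Your overall architecture agrees with the paper's: the equivalence $\int_\Sigma A\simeq\int_{\Sigma'}A'$ is immediate from the $\otimes$-excision property, and the only verification of anomaly-freeness requiring real work is process (2); your checks for processes (1), (3), (4) are essentially fine (the paper dismisses them as obvious), apart from a small slip --- the fused 1-cell labels in (1) and (3) are relative tensor products over the UMTC's $\ED$ resp.\ $\EE$ themselves, not over $Z(\ED)$ or $Z(\EE)$, though Thm.\,\ref{thm:closed} is indeed the right tool, and your use of Cor.\,\ref{cor:tensor-fun} for the new 0-cell in process (4)(a) is correct.

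The genuine gap is in process (2). First, the hypothesis of Def.\,\ref{def:af-cond} at the two old 0-cells is that the unitary categories $\EP$ and $\EQ$ themselves carry closed module structures over the cyclic tensor products surrounding them; there are no auxiliary categories $\EP_0,\EQ_0$ in the data, and the phrase ``$\EP\simeq\fun_\bk(\EP_0,\EP_0)$-modules'' does not correspond to anything in the definition, so the target of your rewriting is not visibly the required one, namely $\fun_\bk(\EP\boxtimes_\EL\EQ,\EP\boxtimes_\EL\EQ)$ with $\EP\boxtimes_\EL\EQ$ the label prescribed by $A'$. Second, and more importantly, Eq.\,(\ref{eq:MMNN}) cannot be applied until you have identified the two fused wall labels $\EM:=\EM_1\boxtimes_{\EC_1}\cdots\boxtimes_{\EC_{m-1}}\EM_m$ and $\EN:=\EN_1\boxtimes_{\ED_1}\cdots\boxtimes_{\ED_{n-1}}\EN_n$ as $\fun_{\EL^\rev}(\EP,\EP)$ and $\fun_\EL(\EQ,\EQ)$, with the $\EL$-module structures on $\EP,\EQ$ being exactly those used to form $\EP\boxtimes_\EL\EQ$. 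This identification is where Cor.\,\ref{cor:tensor-fun} enters (using $Z(\EM)\simeq Z(\EL)\simeq Z(\EN^\rev)\simeq\overline{\EC}_0\boxtimes\ED_0$, which comes from Thm.\,\ref{thm:closed}, together with the closed-module hypothesis at the old 0-cells); your sketch never invokes it for process (2), and ``a further application of Thm.\,\ref{thm:closed} to absorb the contracted $\EL$'' is not a substitute, since that theorem only asserts closedness of relative tensor products of bimodules and says nothing about realizing $\EM,\EN$ as functor categories over $\EL$. Nor does the uniqueness in Cor.\,\ref{cor:0-cell-condition} repair the step: it pins down the closed module only after you already know the cyclic product around the new 0-cell has the form $\fun_\bk(\EX,\EX)$, and it does not by itself identify $\EX$ with the specific label $\EP\boxtimes_\EL\EQ$. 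With the Cor.\,\ref{cor:tensor-fun} identifications in place, Eq.\,(\ref{eq:MMNN}) with middle category $\EL$ gives $\EM\boxtimes_{\overline{\EC}_0\boxtimes\ED_0}\EN\simeq\fun_\bk(\EP\boxtimes_\EL\EQ,\EP\boxtimes_\EL\EQ)$, which is precisely the paper's argument and closes the gap.
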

\pf
Eq. (\ref{eq:fh-pfd}) holds by the $\otimes$-excision property of factorization homology. We need to prove that the new coefficient system $A'$ is anomaly-free. This is obvious for the processes described in Example\,\ref{expl:otimes} (1),(3),(4).

It remains to show that the target label $\EP\boxtimes_\EL\EQ$ in Fig.\,\ref{fig:trees} (b) is anomaly-free.
Let $\EM = \EM_1 \boxtimes_{\EC_1} \cdots \boxtimes_{\EC_{m-1}} \EM_m$ and $\EN = \EN_1 \boxtimes_{\ED_1} \cdots \boxtimes_{\ED_{n-1}} \EN_n$. We have $Z(\EM) \simeq Z(\EL) \simeq Z(\EN^\rev) \simeq \overline{\EC}_0 \boxtimes \ED_0$ by Thm.\,\ref{thm:closed}. Moreover, $\EM \simeq \fun_{\EL^\rev}(\EP,\EP)$ and $\EN \simeq \fun_\EL(\EQ,\EQ)$ by Cor.\,\ref{cor:tensor-fun}. Then $\EM \boxtimes_{\overline{\EC}_0 \boxtimes \ED_0} \EN \simeq \fun_\bk(\EP\boxtimes_\EL\EQ,\EP\boxtimes_\EL\EQ)$ by Eq.\,(\ref{eq:MMNN}), as desired.
\epf

\begin{thm} \label{thm:S2+defects}
Given any stratified sphere $\Sigma=(S^2;\Gamma)$ and an anomaly-free coefficient system $A$ on $\Sigma$, we have
$\int_{\Sigma} A = (\bk, u_\Sigma)$, where $u_\Sigma$ is an object in $\bk$.
\end{thm}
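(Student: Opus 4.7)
The plan is to reduce $\Sigma$, by a finite sequence of $\otimes$-excision moves, to a stratified sphere without 1-stratum, and then invoke Thm\,\ref{thm:high-genus} with $g=0$, which directly delivers a value of the form $(\bk,u_\Sigma)$. Lem\,\ref{lem:stra-af} guarantees that each of the moves listed in Example\,\ref{expl:otimes} preserves both the anomaly-freeness of the coefficient system and the value of $\int_\Sigma A$, so reducing along such moves is safe at every step.

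I would proceed by induction on the number $n$ of 1-cells of $\Gamma$. The base case $n=0$ is exactly Thm\,\ref{thm:high-genus} specialized to $g=0$, giving $\int_\Sigma A\simeq(\bk,u_\Sigma)$. For the inductive step with $n\ge 1$, the first question is whether some 1-cell $e$ has two distinct endpoints; if so, Example\,\ref{expl:otimes}\,(2) lets me contract $e$, producing a stratified sphere with $n-1$ 1-cells, and the induction hypothesis applies.

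Otherwise every 1-cell of $\Gamma$ is a loop, and a planar Euler argument supplies a 1-gon face. Letting $V,E,F$ denote the numbers of vertices, edges, and faces of $\Gamma\subset S^2$ and $c$ its number of connected components, one has $V-E+F=1+c$ together with $\sum_f|f|=2E$; since every edge is a loop each component is a bouquet of loops at a single vertex, so $V=c$, hence $F=E+1$, and the average face size $2E/(E+1)<2$, forcing at least one 2-cell $D$ to be a 1-gon bounded by a single loop $e$ at some vertex $v$. I then perform the three-step sequence: (i) insert a 0-cell $w$ on $e$ via Example\,\ref{expl:otimes}\,(4)(i), which splits $e$ into two parallel 1-cells $e_1,e_2$ between $v$ and $w$ and turns $D$ into a bigon; (ii) fuse $e_1$ and $e_2$ through $D$ via Example\,\ref{expl:otimes}\,(1), collapsing $D$ into a single 1-cell $e'$ between $v$ and $w$; (iii) contract $e'$ via Example\,\ref{expl:otimes}\,(2). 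The net effect removes the loop $e$ while leaving the rest of $\Gamma$ unchanged, so the edge count drops from $n$ to $n-1$. The main obstacle is precisely this loop case: one needs a 1-gon face so that Example\,\ref{expl:otimes}\,(1) applies in step (ii) (supplied by the Euler computation above), and one needs each of the three intermediate stratified spheres to remain anomaly-free so that Lem\,\ref{lem:stra-af} can be reapplied at every step, which is automatic since Lem\,\ref{lem:stra-af} is stated for exactly these moves. Iterating terminates at $n=0$, and the base case concludes the proof.
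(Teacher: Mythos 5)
Your proposal is essentially the paper's own argument: the paper reduces $\Gamma$ to finitely many $0$-cells by repeatedly applying the moves of Example\,\ref{expl:otimes} (whose validity and preservation of anomaly-freeness is Lem.\,\ref{lem:stra-af}), using that every loop on $S^2$ is contractible, and then quotes Thm.\,\ref{thm:genus-0}; your induction on the number of $1$-cells, with the Euler-characteristic search for an innermost one-gon, merely makes the termination of that reduction explicit. Two small repairs are needed. First, the bigon collapse in your step (ii) is the move of merging two $1$-cells and one $2$-cell into a single $1$-cell, i.e.\ Example\,\ref{expl:otimes}\,(3) (Fig.\,\ref{fig:loop}), not Example\,\ref{expl:otimes}\,(1): the standard disk $K$ of move (1) contains no $0$-cells, so no embedding of $K$ can absorb the shared endpoints $v,w$, whereas move (3) is stated exactly for two $1$-cells with common endpoint $0$-cells bounding a $2$-cell; since Lem.\,\ref{lem:stra-af} covers move (3) as well, nothing else in your argument changes. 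Second, a $1$-cell of a stratified surface may be a circle carrying no $0$-cell, in which case it is not ``a loop at a vertex'' and your count $V=c$ (and the Euler formula as you apply it) fails; insert a $0$-cell on every such circle by move (4)(i) before running the Euler argument, and, similarly, if the one-gon face contains isolated $0$-cells in its interior, absorb them into the boundary vertex by (4)(ii) followed by (2) before collapsing. With these adjustments your induction goes through and coincides with the paper's proof.
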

\pf
Applying the processes described in Example\,\ref{expl:otimes} (2),(3),(4) repeatedly, we can always reduce the graph $\Gamma$ to finitely many points on $S^2$ because all loops on $S^2$ are contractible. Then the result follows from Thm.\,\ref{thm:genus-0}.
\epf

\begin{figure}[bt]
\centerline{
\begin{tabular}{@{}c@{\quad\quad\quad\quad\quad}c}
\raisebox{-0pt}{
  \begin{picture}(150,90)
   \put(0,10){\scalebox{2.5}{\includegraphics{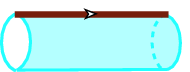}}}
   \put(0,10){
     \setlength{\unitlength}{.75pt}\put(-8,-9){
     \put(85, 76)     {$ \EM $}
     \put(48, 27)     {$ \EC $}
          }\setlength{\unitlength}{1pt}}
  \end{picture}}
&  
\raisebox{-0pt}{
  \begin{picture}(150,90)
   \put(0,10){\scalebox{2.5}{\includegraphics{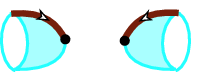}}}
   \put(0,10){
     \setlength{\unitlength}{.75pt}\put(-8,-9){
     \put(53, 70)     {$ \EM $}
     \put(40,30)      {$ \EC $}
     \put(132,72)      {$\EM^\rev$}
     \put(148,30)        {$\EC$}
     \put(78,41)      {$\EX\boxtimes\EX^\op$}
          }\setlength{\unitlength}{1pt}}
  \end{picture}}  \\
  (a) & (b)
\end{tabular}}
\caption{Figure (a) shows a stratified cylinder with a unique 2-cell labeled by a UMTC $\EC$ and a unique 1-cell labeled by a closed multi-fusion $\EC$-$\EC$-bimodule $\EM$.
Figure (b) is a disjoint union of two open 2-disks with 2-cells labeled by $\EC$, 1-cells labeled by $\EM$ and $\EM^\rev$, respectively, and two 0-cells simultaneously labeled by $(\EX\boxtimes \EX^\op, \oplus_{i\in \mO(\EX)} i\boxtimes i)$ (see Rem.\,\ref{rem:0-cell-label}).
}
\label{fig:cylinder}
\end{figure}

Before considering high genus surfaces, we prove a lemma.
\begin{lem} \label{lem:M-cylinder}
Let $\Sigma$ be the stratified cylinder $(S^1\times\Rb;\Rb)$ depicted in Fig.\,\ref{fig:cylinder} (a), in which the target label $\EC$ is a UMTC and the target label $\EM$ is a closed multi-fusion $\EC$-$\EC$-bimodule. We have
\be
\int_{(S^1\times\Rb;\Rb)} (\EC; \EM;\emptyset) \simeq \fun_\bk(\EX,\EX), 
\ee
where $\EX$ is the unique (up to equivalence) left $\EC$-module such that $\EM \simeq \fun_\EC(\EX,\EX)$. 
\end{lem}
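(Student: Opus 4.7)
My plan is to apply the $\otimes$-excision property of factorization homology to decompose $\int_\Sigma A$ into a tensor product of two elementary pieces, and then to invoke Cor.\,\ref{cor:tensor-fun} to identify the result with $\fun_\bk(\EX,\EX)$.

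For the $\otimes$-excision, I pick a point $p'\in S^1$ diametrically opposite to the marked point $\mathrm{pt}$ at which the 1-cell $\{\mathrm{pt}\}\times\Rb$ is anchored, and a function $g:S^1\to[-1,1]$ with $g^{-1}(1)=\{\mathrm{pt}\}$, $g^{-1}(-1)=\{p'\}$, and $g^{-1}(0)$ equal to two points $\{q',q''\}$ lying on the two arcs of $S^1\setminus\{\mathrm{pt},p'\}$. Pulling back along the projection $S^1\times\Rb\to S^1$ gives a collar gluing $f:\Sigma\to[-1,1]$ whose restriction to $f^{-1}(-1,1)$ is a bundle of stratified 1-manifolds. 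The resulting pieces are: $M_{(-1,1]}=(S^1\setminus\{p'\})\times\Rb$, a colored $(\Rb^2;\Rb)$ containing the whole 1-cell with label $\EM$; $M_{[-1,1)}=(S^1\setminus\{\mathrm{pt}\})\times\Rb$, a plain $\Rb^2$ labelled by $\EC$; and $M_{\{0\}}\times\Rb=(S^1\setminus\{\mathrm{pt},p'\})\times\Rb$, a disjoint union of two unstratified open strips, each a copy of $\Rb^2$ labelled by $\EC$. Example\,\ref{expl:fh-labeled-disk}(1)(2) gives their factorization homologies as $\EM$, $\EC$ and $\EC\boxtimes\EC^\rev$ respectively, where the reversal on one of the two factors reflects the opposite orientations induced on the two cuts as seen from the arc containing $\mathrm{pt}$. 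Applying Thm.\,\ref{thm:tensor-excision} yields
\[
\int_\Sigma A \;\simeq\; \EC\boxtimes_{\EC\boxtimes\EC^\rev}\EM .
\]

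Since $\EC$ is a UMTC, the braiding supplies a canonical braided monoidal equivalence $\EC\boxtimes\EC^\rev\simeq\EC\boxtimes\overline{\EC}\simeq Z(\EC)$, and under this identification the $\EC\boxtimes\EC^\rev$-bimodule structure on $\EM$ induced by the collar gluing matches precisely the $Z(\EC)$-action built into the closed multi-fusion structure $\phi_\EM:\overline{\EC}\boxtimes\EC\xrightarrow{\simeq}Z(\EM)\simeq Z(\EC)$ of $\EM$. Thus
\[
\int_\Sigma A \;\simeq\; \EC\boxtimes_{Z(\EC)}\EM .
\]
The hypothesis $\EM\simeq\fun_\EC(\EX,\EX)$ is exactly the input required by Cor.\,\ref{cor:tensor-fun}, applied with $\ED^\rev:=\EM$ and with the left $\EC$-module appearing in the corollary taken to be our $\EX$; note that the corresponding braided equivalence $Z(\ED)\simeq Z(\EC)$ is automatic because $Z(\EM^\rev)\simeq\overline{Z(\EM)}\simeq\EC\boxtimes\overline{\EC}\simeq Z(\EC)$. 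The corollary then yields $\EC\boxtimes_{Z(\EC)}\EM\simeq\fun_\bk(\EX,\EX)$, completing the proof.

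The principal subtlety lies in the orientation bookkeeping of the first step: one must verify that the two collar strips enter as $\EC\boxtimes\EC^\rev$ rather than as $\EC\boxtimes\EC$, and that the resulting bimodule structure on $\EM$, once transported along $\EC\boxtimes\EC^\rev\simeq Z(\EC)$, coincides on the nose with the closed multi-fusion structure of $\EM$. These checks are entirely analogous to the $\EC^\rev$ vs $\EC$ flip and the identification with $Z(\EC)$ appearing at the end of the proof of Thm.\,\ref{thm:genus-0}, so no new ideas are required.
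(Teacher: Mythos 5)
Your proof is correct and follows essentially the same route as the paper: the paper likewise applies $\otimes$-excision to obtain $\int_{(S^1\times\Rb;\Rb)}(\EC;\EM;\emptyset)\simeq\EC\boxtimes_{Z(\EC)}\EM$ and then invokes Cor.\,\ref{cor:tensor-fun} (with $Z(\EM^\rev)\simeq\overline{\EC}\boxtimes\EC\simeq Z(\EC)$) to conclude $\fun_\bk(\EX,\EX)$. Your write-up merely makes the collar gluing and the $\EC\boxtimes\EC^\rev\simeq Z(\EC)$ orientation bookkeeping explicit, which the paper leaves implicit.
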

\pf
Since $Z(\EM^\rev) \simeq \overline{\EC} \boxtimes \EC \simeq Z(\EC)$, there is a unique left $\EC$-module $\EX$ such that $\EM \simeq \fun_\EC(\EX,\EX)$ by Cor.\,\ref{cor:tensor-fun}. By the $\otimes$-excision property and Cor.\,\ref{cor:tensor-fun} again, we have $\int_{(S^1\times\Rb;\Rb)} (\EC; \EM;\emptyset)\simeq \EC\boxtimes_{Z(\EC)} \EM \simeq \fun_\bk(\EX,\EX)$, which maps $\one_\EC \boxtimes_{Z(\EC)} \one_\EM$ to $\id_\EX$. 
\epf

\begin{rem} \label{rem:red-genus}
We would like to forget the 1-disk algebra structure on $\fun_\bk(\EX,\EX)$ to obtain a 0-disk algebra $(\fun_\bk(\EX,\EX), \id_\EX)$. There is 
a canonical $\fun_\bk(\EX,\EX)$-$\fun_\bk(\EX,\EX)$-bimodule equivalence $\fun_\bk(\EX,\EX) \simeq \EX \boxtimes \EX^\op$, which maps $\id_\EX$ to $\oplus_{i\in \mO(\EX)} i\boxtimes i$ (see the proof of Thm.\,\ref{thm:high-genus}).
This suggests a very important tool of computing the factorization homology of a surface $\Sigma$ with high genus. More precisely, we can always replace any cylinder-like region in $\Sigma$ as depicted in Fig.\,\ref{fig:cylinder} (a) by two open 2-disks as depicted in Fig.\,\ref{fig:cylinder} (b) 
without changing the value of factorization homology. 
\end{rem}


\begin{thm} \label{thm:higher-g-defects}
Given any closed stratified surface $\Sigma$ and an anomaly-free coefficient system $A$ on $\Sigma$, we have
$\int_{\Sigma} A \simeq (\bk, u_\Sigma)$, where $u_\Sigma$ is an object in $\bk$.
\end{thm}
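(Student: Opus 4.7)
The plan is to prove the statement by induction on the genus $g$ of the closed surface underlying $\Sigma$. The base case $g=0$ is exactly Thm.\,\ref{thm:S2+defects}, which already handles an arbitrary graph $\Gamma$ on $S^2$ together with an anomaly-free coefficient system. So the real content of the proof is the inductive step.

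For $g\geq 1$, the idea is to pick a non-separating simple closed curve $C\subset\Sigma$ together with a tubular neighborhood $T\cong S^1\times\Rb$ of $C$, and to reduce $\int_\Sigma A$ to the factorization homology of a closed stratified surface of genus $g-1$. First I would normalize the stratification inside $T$, without changing $\int_\Sigma A$ or losing anomaly-freeness. By a small perturbation I can assume $\Gamma$ meets $C$ transversally in finitely many points. Then I apply Lem.\,\ref{lem:stra-af} iteratively, using the contraction and fusion moves of Example\,\ref{expl:otimes}(1)--(3) together with the cell-insertion moves of Example\,\ref{expl:otimes}(4), to bring the stratification $\Gamma\cap T$ to one of two normal forms: either $T$ sits inside a single 2-cell (no stratification in $T$ at all), or $T$ contains exactly one 1-cell wrapping once around $S^1$, labeled by some closed multi-fusion $\EC$-$\EC$-bimodule $\EM$. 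The remark at the end of Example\,\ref{expl:otimes}(4), that its moves allow one to separate non-contractible loops into loops that can be covered by an open 2-disk, is precisely the combinatorial tool that accomplishes this.

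Once the cylinder $T$ is in normal form, I invoke the cylinder-reduction principle of Rem.\,\ref{rem:red-genus}: by Lem.\,\ref{lem:M-cylinder} (or, in the no-1-cell case, the computation carried out in the proof of Thm.\,\ref{thm:high-genus}), the factorization homology of $T$ is $\fun_\bk(\EX,\EX)\simeq\EX\boxtimes\EX^\op$ as a bimodule over itself, with the identity functor corresponding to $\oplus_{i\in\mO(\EX)} i\boxtimes i$. Here $\EX$ is the unique left $\EC$-module with $\EM\simeq\fun_\EC(\EX,\EX)$ (or $\EX=\EC$ in the no-1-cell case). Thus I can cut $\Sigma$ open along $C$ and glue in two open 2-disks, placing on the two new 0-cells the joint label $(\EX\boxtimes\EX^\op,\,\oplus_{i\in\mO(\EX)} i\boxtimes i)$ in the sense of Rem.\,\ref{rem:0-cell-label}. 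This produces a closed stratified surface $\Sigma'$ of genus $g-1$ with an induced coefficient system $A'$; the fact that $A'$ is anomaly-free is immediate from the identification of $\EX\boxtimes\EX^\op$ with $\fun_\bk(\EX,\EX)$, which verifies Def.\,\ref{def:af-cond}(3) at the two new 0-cells. The $\otimes$-excision property yields $\int_\Sigma A\simeq\int_{\Sigma'}A'$, and by the inductive hypothesis the right-hand side is of the form $(\bk,u_{\Sigma'})$, completing the induction.

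The main obstacle is the first step, namely verifying that the moves of Example\,\ref{expl:otimes}, combined with the insertion operations of Example\,\ref{expl:otimes}(4), are truly sufficient to bring an arbitrary $\Gamma\cap T$ into one of the two normal forms above. This is a purely combinatorial matter about finite graphs on an open cylinder: every such graph can be reduced, by a finite sequence of contractions of internal edges, fusions of parallel 1-cells through an adjacent 2-cell, and auxiliary 0-cell/1-cell insertions, to either the empty graph or a single embedded circle parallel to $C$. Each individual move preserves $\int_\Sigma A$ and anomaly-freeness by Lem.\,\ref{lem:stra-af}, so the only work lies in organizing the sequence of moves, but this is exactly the content of the remark following Example\,\ref{expl:otimes}(4).
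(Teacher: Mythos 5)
Your overall strategy (induct on genus, reduce the graph by the moves of Example\,\ref{expl:otimes}, cut a handle open using a cylinder computation, and invoke Thm.\,\ref{thm:S2+defects} as the base case) is the same as the paper's, but your key normalization step contains a genuine error. The two normal forms you allow inside the tubular neighborhood $T\cong S^1\times\Rb$ --- no stratification at all, or a single 1-cell \emph{wrapping once around} $S^1$ --- cannot be achieved in general. If $\Gamma$ is, say, the standard one-vertex, one-face polygon stratification of $\Sigma_g$, then the 1-cells of $\Gamma$ span $H_1(\Sigma)$, so \emph{every} non-separating simple closed curve $C$ has nonzero algebraic intersection with some 1-cell; the moves of Lem.\,\ref{lem:stra-af} can fuse the resulting transverse crossings of $\Gamma$ with $T$ into a single arc, but they can never remove them or turn them into a loop parallel to the core. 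The achievable normal form is therefore a single 1-cell running \emph{across} the cylinder, i.e.\ the configuration $(S^1\times\Rb;\Rb)$ of Fig.\,\ref{fig:cylinder}\,(a) with the 1-cell labeled by a closed multi-fusion $\EC$-$\EC$-bimodule $\EM$ --- precisely the case your list omits, and precisely the case for which the paper proves Lem.\,\ref{lem:M-cylinder}. Note also that your citation of Lem.\,\ref{lem:M-cylinder} does not match your stated normal form: that lemma concerns the longitudinal 1-stratum $\Rb\subset S^1\times\Rb$, whereas for a 1-cell wrapping around $S^1$ the relevant computation is the one in Prop.\,\ref{prop:K}, and the answer is not $\fun_\bk(\EX,\EX)$ (for $\EC=\bk$ and $\EM=\fun_\bk(\EX,\EX)$ it is $(\bk,\,\mathrm{End}(\one_\EM))$, a pair in $\bk$, not the category $\EX^\op\boxtimes\EX$).

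The same confusion propagates into your gluing and anomaly-freeness check. In the correct (transverse-arc) situation, the two capping disks that replace $T$ are not bare disks: each must carry a half-infinite 1-cell labeled $\EM$ resp.\ $\EM^\rev$ ending at the new 0-cell, exactly as in Fig.\,\ref{fig:cylinder}\,(b), and only then does Def.\,\ref{def:af-cond}\,(3) hold at the new 0-cells, since $\cboxtimes_\EC(\EM)\simeq \EC\boxtimes_{Z(\EC)}\EM\simeq\fun_\bk(\EX,\EX)$. With the bare disks you describe, the punctured neighborhood of a new 0-cell has factorization homology $\fun_\bk(\EC,\EC)$, so anomaly-freeness would force the label to be $\EC$ rather than $\EX$ (or $\EX^\op$), and your claimed verification of Def.\,\ref{def:af-cond}\,(3) fails. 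So the inductive skeleton is fine, but the heart of the inductive step --- identifying the correct cylinder configuration after reduction and gluing in the correct decorated disks --- needs to be replaced by the paper's argument via Lem.\,\ref{lem:M-cylinder} and Rem.\,\ref{rem:red-genus}.
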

\pf
The result is true if $\Sigma$ is a stratified sphere by Thm.\,\ref{thm:S2+defects}. We assume that $\Sigma$ is of genus greater than zero. After we apply the processes described by Example\,\ref{expl:otimes} (2),(3),(4) to the graph $\Gamma$ in $\Sigma$ until no further reduction is possible, there is at least one cylinder-like region in $\Sigma$ which looks like the one depicted in Fig.\,\ref{fig:cylinder} (a). Using Lem.\,\ref{lem:M-cylinder} and Rem.\,\ref{rem:red-genus}, we can reduce the problem to the genus zero case.
\epf

\begin{rem}
Given a stratified surface with boundary $\Sigma$, one can obtain a stratified surface without boundary $\Sigma'$ by attaching 2-disks to the boundary. Therefore, the factorization homology of a stratified surface with boundary can be viewed as a special case of that of a stratified surface without boundary but with some contractible 2-cells labeled by $\bk$. We give an example in Cor.\,\ref{cor:pants}. 
\end{rem}

\void{
\begin{rem}
If the coefficient system is not anomaly-free, the value of the factorization homology might not be $\bk$ because, from the perspective of topological order, there is a non-trivial 3d bulk (or trivial 3d bulk with non-trivial defects of codimension 1 or 2). Computing factorization homology in these anomalous cases is also possible (see \cite{bbj2}).  
\end{rem}
}

\section{Topological orders} \label{sec:to}
In this section, we discuss the relation between the theory of factorization homology and 2d topological orders.

\subsection{Topological orders and anomaly-free defects} \label{sec:af}
In this subsection, we discuss topological orders and the notion of anomaly-free defects of codimension 0,1,2. In particular, we show that giving a stratified surface decorated by anomaly-free defects is equivalent to giving the same stratified surface with an anomaly-free coefficient system. All domain walls and defects in a 2d topological order are assumed to be gapped without further announcement.

\medskip
It is known that the topological excitations of an anomaly-free 2d topological order on an open 2-disk form a UMTC $\EC$. Microscopically, by ``anomaly-free" we mean that the topological order can be realized by a 2d lattice model \cite{kong-wen}. Macroscopically, by ``anomaly-free" we mean that all topological excitations are detectable by the braiding among themselves \cite{kong-wen}. In particular, it implies that if, by moving a particle-like topological excitation $x$ around another $y$ along a circular path, if we detect no physical difference for all $x$, then the excitation $y$ must be the vacuum. Namely, the only simple transparent object in $\EC$ is the tensor unit $\one_\EC$. This anomaly-free condition corresponds exactly to the non-degeneracy of the braidings in a UMTC. The simplest example of UMTC is $\bk$, which describes the trivial 2d topological order.

\begin{rem}
An anomalous 2d topological order can only be realized as a boundary of a 3d lattice model.
\end{rem}

\begin{figure}
\centerline{
\begin{tabular}{@{}c@{\quad\quad\quad\quad\quad\quad}c@{}}
\begin{picture}(100, 60)
   \put(25,0){\scalebox{2}{\includegraphics{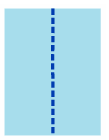}}}
   \put(25,0){
     \setlength{\unitlength}{.75pt}\put(-18,-40){
     \put(50, 148)     { \scriptsize $ \EC $}
     \put(28, 100)     { \scriptsize $\EC$}
     \put(77, 100)     { \scriptsize $\EC$}
     }\setlength{\unitlength}{1pt}}
  \end{picture} &
  \begin{picture}(100, 60)
   \put(25,0){\scalebox{2}{\includegraphics{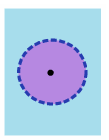}}}
   \put(25,0){
     \setlength{\unitlength}{.75pt}\put(-18,-40){
     \put(25, 130)     { \scriptsize $\EC$}
     \put(51, 110)     { \scriptsize $\EC$}
     \put(54, 87)     { \scriptsize $x$}
     }\setlength{\unitlength}{1pt}}
  \end{picture} \\
(a) & (b)
\end{tabular}}
\caption{ (a) Restricting an anomaly-free 2d topological order $\EC$ to a neighborhood of the dotted line gives a 1d topological order; (b) restricting to a neighborhood of a particle-like excitation $x\in \EC$ gives a 0d topological order described by a pair $(\EC,x)$.
} \label{fig:dr}
\end{figure}

An anomaly-free 2d topological order naturally induces topological orders of lower dimensions as illustrated in Fig.\,\ref{fig:dr}. In Fig.\,\ref{fig:dr} (a), a 2d anomaly-free topological order $\EC$ restricted to an open neighborhood of a 1-codimensional submanifold gives a 1d topological order. It is described by the unitary fusion category $\EC$ obtained from the UMTC $\EC$ by forgetting its braiding structure. A unitary fusion category is a 1-disk algebra in $\EV_{\mathrm{uty}}$ (recall Example\,\ref{expl:disk-alg}), which means that topological excitations can be fused in the oriented 1-dimensional manner. In Fig.\,\ref{fig:dr} (b), by restricting to an open neighborhood of a single topological excitation $x\in \EC$, we obtain a 0-disk algebra $(\EC,x)$, where $\EC$ is viewed as a unitary category by forgetting its monoidal structure.

\medskip
More generally, a potentially anomalous 1d topological order is described by a unitary multi-fusion category (UMFC) $\EM$ \cite{kong-wen-zheng}. It is anomaly-free as a 1d topological order if its Drinfeld center $Z(\EM)$ is trivial, i.e. $Z(\EM)\simeq \bk$. It is known that such a UMFC has a unique indecomposable left module $\EP$ (a unitary category) such that the module structure on $\EP$ gives a monoidal equivalence $\EM \simeq \fun_\bk(\EP,\EP)$. The phase $\EM$ is called {\it 2-stable} (i.e. stable as a 1d topological order) if and only if $\EM\simeq \bk$ \cite{kong-wen-zheng}. When $Z(\EM)\simeq \bk$ and $\EM\not{\simeq} \bk$, the anomaly-free 1d phase is not 2-stable, but it might be stable as a 2d phase. We give explicit lattice models to illustrate this phenomenon in Example\,\ref{expl:toric-1}, \ref{expl:toric-2}, \ref{expl:lw-mod}.

Similar to how we define the notion of an anomaly-free 2d topological order, we can define the notion of an anomaly-free domain wall (or an anomaly-free 1-codimensional defect) between two anomaly-free 2d topological orders. More precisely, in the stratified 2-disk depicted in Fig.\,\ref{fig:anomaly-free} (a), there is a domain wall $\EM$ between two 2d phases $\EC$ and $\ED$. The arrow on the wall indicates an orientation, which tell us in what order we should fuse excitations on the wall. Bulk excitations from two sides of the wall can be fused into the wall. This gives the bimodule action on $\EM$ from the UMTC's associated to the adjacent 2-cells. The arrow on the wall also tell us which one of $\EC$ and $\ED$ act on $\EM$ from left (or right). Our convention is that when you are standing on the wall and seeing the arrow coming towards you, what sits on your left (or right) acts from left (or right). For example, in Fig.\,\ref{fig:anomaly-free} (a), $\EM$ is a multi-fusion $\EC$-$\ED$-bimodule, and $\EN$ is a multi-fusion $\EE$-$\ED$-bimodule. There is no difference if we flip the arrow on the wall between $\ED$ and $\EE$ in Fig.\,\ref{fig:anomaly-free} (a) and replace $\EN$ by $\EN^\rev$.

The wall $\EM$ is called {\it anomaly-free} if one can realize it as a boundary in a 2d lattice model with the bulk phase given by $\overline{\EC}\boxtimes \ED$. By the boundary-bulk relation \cite{anyon,kong-wen-zheng}, macroscopically, the domain wall $\EM$ between $\EC$ and $\ED$ is anomaly-free if $\EM$ is a closed multi-fusion $\EC$-$\ED$-bimodule. In general, anomaly-free walls between two arbitrary UMTC's $\EC$ and $\ED$ might not exist. The UMTC's $\EC$ and $\ED$ are called Witt equivalent if there are solutions of $\EK$ to the braided eqivalence $Z(\EK)\simeq \overline{\EC}\boxtimes \ED$ \cite{dmno}. In this case, the solutions of $\EK$ are often not unique. It is known that fusion-category solutions one-to-one correspond to the Lagrangian algebras in $\overline{\EC}\boxtimes \ED$ \cite{dmno}. If $\EK$ is allowed to be multi-fusion, there are more solutions. But all solutions are Morita equivalent \cite{kong-zheng}.

\begin{rem}
An anomalous 1-codimensional defect in a 2d topological order can only be realized as a 2-codimensional defect in a 3d lattice model with a non-trivial 3d bulk phase.
\end{rem}

\begin{figure}
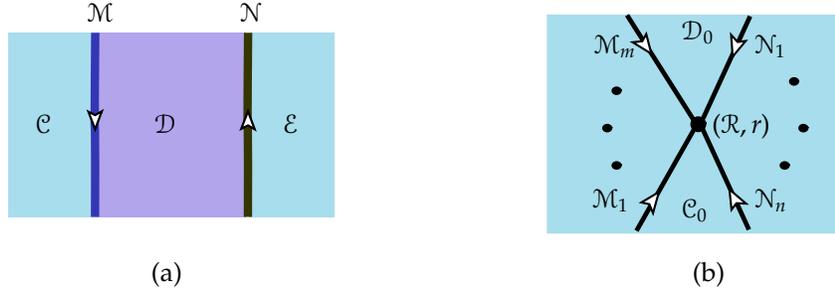

\centerline{
\begin{tabular}{@{}c@{\quad\quad\quad\quad\quad\quad}c@{}}
\begin{picture}(140, 95)
   \put(10,10){\scalebox{2}{\includegraphics{pic-ACB-2.eps}}}
   \put(-30,-54){
     \setlength{\unitlength}{.75pt}\put(-18,-19){
     \put(85, 150)    {$\EC$}
     \put(145, 150)    {$\ED$}
     \put(210, 150)    {$\EE$}
     \put(108,206)     { $\EM$}
     \put(185,206)     { $\EN$}
   }\setlength{\unitlength}{1pt}}
  \end{picture}
&
  \begin{picture}(150,100)
   \put(10,0){\scalebox{2.2}{\includegraphics{pic-trees-2.eps}}}
   \put(10,0){
     \setlength{\unitlength}{.75pt}\put(-8,-9){
     \put(93,68)   { $(\ER,r)$}
     \put(118, 108)     {$ \EN_1 $}
     \put(37, 108)     {$ \EM_m $}
     \put(36,30)      {$\EM_1$}
     \put(118,30)      {$\EN_n$}
     \put(81,115)      {$\ED_0$}
     \put(81,25)      {$\EC_0$}
          }\setlength{\unitlength}{1pt}}
  \end{picture}  \\
  (a) & (b)
\end{tabular}}
\caption{Figure (a) depicts a stratified 2-disk with three 2-cells, each of which is a 2d anomaly-free topological order, and two 1-cells, each of which is an anomaly-free domain wall. Figure (b) depicts a standard stratified 2-disk $(\Rb^2;C(I))$. The 0-cell at original point is labeled by an anomaly-free 2-codimensional defect $(\ER,r)$. }
\label{fig:anomaly-free}
\end{figure}

Let $\EC_0, \cdots, \EC_m$ and $\ED_0,\cdots, \ED_n$ be UMTC's and $\EC_m=\ED_0$ and $\EC_0=\ED_n$. Let $\EM_i$ be a closed multi-fusion $\EC_{i-1}$-$\EC_i$-bimodule for $i=1,\cdots, m$, and let $\EN_j$ be a closed multi-fusion $\ED_{j-1}$-$\ED_j$-bimodule for $j=1,\cdots, n$. Consider a 2-codimensional defect $(\ER,r)$ depicted in Fig.\,\ref{fig:anomaly-free} (b). The 2-codimensional defect $(\ER,r)$ is called anomaly-free if it can be realized as a 2-codimensional defect on the boundary of a 2d lattice model with the bulk given by $\overline{\EC_0}\boxtimes \ED_0$, the boundary on the one side of $(\ER,r)$ is given by $\EM_1\boxtimes_{\EC_1} \cdots \boxtimes_{\EC_{m-1}} \EM_m$, and that on the other side is given by $\EN_1 \boxtimes_{\ED_1} \cdots \boxtimes_{\ED_{n-1}} \EN_n$. According to \cite{kk,anyon}, all of $\overline{\EC_0}\boxtimes \ED_0$, $\EM_1\boxtimes_{\EC_1} \cdots \boxtimes_{\EC_{m-1}} \EM_m$, $\EN_1 \boxtimes_{\ED_1} \cdots \boxtimes_{\ED_{n-1}} \EN_n$ can be realized by Levin-Wen type of lattice models (see Example\,\ref{expl:lw-mod}). As a consequence, the unitary category $\ER$ is uniquely (up to equivalence) determined by other data. Mathematically, $(\ER,r)$ is anomaly-free if and only if the $\cboxtimes_{\EC_0, \cdots, \EC_{m-1}, \ED_0, \cdots, \ED_{n-1}}(\EM_1, \cdots, \EM_m, \EN_1, \cdots, \EN_n)$-module structure on $\ER$ is closed (recall Def.\,\ref{def:closed-module}), i.e.
\be \label{eq:af-cond}
\cboxtimes_{\EC_0, \cdots, \EC_{m-1}, \ED_0, \cdots, \ED_{n-1}}(\EM_1, \cdots, \EM_m, \EN_1, \cdots, \EN_n) \simeq \fun_\bk(\ER,\ER).
\ee
According to \cite{kk} (see also Example\,\ref{expl:1d-fhomology}), $(\ER,r)$ satisfying Eq.\,(\ref{eq:af-cond}) is the only type of 2-codimensional defects that are realizable in Levin-Wen type of lattice models.
Notice that we have made an ad hoc choice of partition of the set $\{ \EM_1, \cdots, \EM_m, \EN_1, \cdots, \EN_n\}$ when we define the anomaly-free condition via lattice models. But the mathematical anomaly-free condition given in Eq.\,(\ref{eq:af-cond}) is independent of the partition.

\begin{rem} 
It is worthwhile to study a special case, in which $m=1$, $n=0$. Let $\EM_1=\EM$ and $\EC_0=\EC$. In this case, $\EM^\rev$ and $\EC$ share the same Drinfeld center, consequently there is a unique left $\EC$-module $\EP$ such that $\EM\simeq \fun_\EC(\EP,\EP)$ by Cor.\,\ref{cor:tensor-fun}. The ``moreover" part of Cor.\,\ref{cor:tensor-fun} says that $\fun_\bk(\EP,\EP)\simeq\fun_\bk(\ER,\ER)$. Namely, $\ER\simeq\EP$ as $\EM$-$\EC^\rev$-bimodules. 
Now we flip the arrow on the wall and keep the label $\EM$. Then $\EM^\rev,\EC$ still share the same Drinfeld center and $\EM\simeq \fun_\EC(\EP,\EP)$ with the same $\EP$ but $\fun_\bk(\EP,\EP) \simeq \fun_\bk(\ER,\ER)^\rev \simeq \fun_\bk(\ER,\ER)^\op \simeq \fun_\bk(\ER^\op,\ER^\op)$. Therefore, $\ER\simeq\EP^\op$ as $\EC^\rev$-$\EM$-modules in the flipped case.
\end{rem}

\begin{rem} \label{rem:flip-arrow}
A special case of Fig.\,\ref{fig:anomaly-free} (b) is worth of a separate remark. If $\EM_i=\EN_j=\EC_k=\EC_j=\EC$, where $\EC$ is a UMTC, we must have $\ER\simeq\EC$. The case $\EC=\bk$ says that an anomaly-free 2-codimensional defect in the trivial 2d topological order, i.e. 0d anomaly-free topological order (see \cite[Example\,2.2.3 (1),\, Remark\,2.2.4]{kong-wen-zheng}), is given by a pair $(\bk, u)$, where $u$ is an object in $\bk$. 
\end{rem}

\begin{rem}
If the condition Eq.\,(\ref{eq:af-cond}) does not hold, then the defect $(\ER,r)$ is called an anomalous 2-codimensional defect. Such a defect can only be realized as a 3-codimensional defect in a 3d lattice model with a non-trivial 3d bulk phase.
\end{rem}

\subsection{Dimensional reduction and factorization homology} \label{sec:dr-fh}




In this subsection, we show that the following three basic dimensional reduction processes of topological orders coincide with the $\otimes$-excision property of factorization homology:
\bnu
\item stacking an anomaly-free 2d topological order on the top of another, viewed as the fusion of two such topological orders into one; 
\item the fusion of two anomaly-free 1-codimensional defects (connected by an anomaly-free 2d topological order) into one (see the picture in Eq.\,(\ref{eq:MDN})); 
\item the fusion of two anomaly-free 2-codimensional defects (connected by an anomaly-free 1-codimensional defect) into one (see Fig.\,\ref{fig:1d-fhomology}).
\enu
We study these three processes in order.

\medskip
{\bf Process 1}: If we stack two layers of anomaly-free 2d topological orders $\EC$ and $\ED$ without introducing any coupling between the two layers, topological excitations in this two-layer system form a UMTC: $\EC\boxtimes \ED$. This agrees with the following special case of the $\otimes$-excision property: if we label two copies of $\Rb^2$ by $\EC$ and $\ED$, respectively, then $\int_{\Rb^2\cup\Rb^2}(\EC,\ED;\emptyset;\emptyset) \simeq \EC\boxtimes\ED$.

\medskip
{\bf Process 2}: Consider the configuration depicted on the right hand side of Eq.\,(\ref{eq:MDN}). There are three anomaly-free 2d topological orders labeled by $\EC,\ED,\EE$. They are separated by two anomaly-free domain walls labeled by $\EM$ and $\EN$. In other words, $\EM$ is a closed multi-fusion $\EC$-$\ED$-bimodule, and $\EN$ is a closed multi-fusion $\ED$-$\EE$-bimodule.

On the one hand, according to the $\otimes$-excision property, the factorization homology of the whole region depicted on the right hand side of Eq.\,(\ref{eq:MDN}) should be given by $\EM\boxtimes_\ED \EN$. If we map the whole region to a vertical line (parallel to the two walls in Eq.\,(\ref{eq:MDN})), then by the pushforward property, we obtain a 1-disk algebra (or a 1d topological order) on the line given by $\EM\boxtimes_\ED \EN$.

On the other hand, viewed from far away, the walls $\EM$ and $\EN$ fuse into a new wall, which is denoted by $\EM\times_\ED \EN$, i.e.
\be \label{eq:MDN}
\EM \times_{\ED} \EN :=
\raisebox{-3.5em}{ \begin{picture}(140, 75)
   \put(10,10){\scalebox{1.5}{\includegraphics{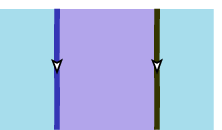}}}
   \put(-30,-54){
     \setlength{\unitlength}{.75pt}\put(-18,-19){
     \put(85, 140)    {$\EC$}
     \put(125, 140)    {$\ED$}
     \put(180, 140)    {$\EE$}
     \put(93,182)     { $\EM$}
     \put(155,182)     { $\EN$}
   }\setlength{\unitlength}{1pt}}
  \end{picture}}\,.
\ee
Suppose $\EC, \ED, \EE$ are all non-chiral. Namely, they are all Drinfeld centers of some unitary fusion categories. By the constructions of Levin-Wen type of models with domain wall given in \cite{kk}, one can realize $\EM \times_{\ED} \EN$ as $\EM \boxtimes_{\ED} \EN$ in these lattice models. This is also consistent with and a consequence of Example\,\ref{expl:1d-fhomology} via the boundary-bulk duality \cite{kong-wen-zheng, kong-zheng}. For chiral cases, by applying the folding trick (folding along an arbitrary vertical line in the $\ED$-phase), one can reduce the problem to non-chiral cases. Such fusion of domain walls $\EM$ and $\EN$ has already been proposed and studied in \cite{fs}. Moreover, the fused wall $\EM\boxtimes_\ED\EN$ is automatically anomaly-free by Thm.\,\ref{thm:closed} (see also \cite{kawahigashi}). 
\void{
One can also prove $\EM \times_{\ED} \EN\simeq\EM \boxtimes_{\ED} \EN$ model independently by the theory of anyon condensation \cite{anyon}. More precisely, $\EM\times_\ED \EN$, as a gapped boundary of the bulk phase $\overline{\EC}\boxtimes \EE$, must be obtained by an anyon condensation, which is completely determined by the functors $L: \EC \to \EM$ and $R: \EE \to \EN$. By the universal property of $\EM\boxtimes_\ED\EN$, it is clear that there is a canonical monoidal functor $F: \EM\boxtimes_\ED\EN \to \EM\times_\ED\EN$ such that $\overline{\EC}\boxtimes \EE \to \EM\times_\ED\EN$ factors through $F$. Moreover, $F$ must be surjective (containing all simple objects as the direct summands of $F(x)$ for $x\in \EM\boxtimes_\ED\EN$). Otherwise, some excitations in $\EM\times_\ED\EN$ cannot be detected by the bulk excitations in $\overline{\EC}\boxtimes \EE$ (\cite{levin,anyon}). By the surjectivity of $F$, we can show that $F^\vee(\one_{\EM\times_\ED\EN})$ must include $\one_{\EM\boxtimes_\ED\EN}$ as a subalgebra, where $F^\vee$ is the right adjoint functor of $F$. As a consequence, $(L\boxtimes R)^\vee F^\vee(\one_{\EM\times_\ED\EN})$ must include the Lagrangian algebra $(L\boxtimes R)^\vee(\one_{\EM\boxtimes_\ED\EN})$ as a subalgebra. On the other hand, $(L\boxtimes R)^\vee F^\vee(\one_{\EM\times_\ED\EN})$ must also be a Lagrangian algebra. Therefore, $F$ must be a monoidal equivalence.  
}

Therefore, we must have $\EM \times_{\ED} \EN\simeq\EM \boxtimes_{\ED} \EN$ as long as $\EC,\ED,\EE$ and $\EM,\EN$ are all anomaly-free. In other words, the dimensional reduction process corresponds precisely to the $\otimes$-excision property and the pushforward property of factorization homology.

\begin{rem}
In general, even if $\EM$ and $\EN$ are both unitary fusion categories, $\EM\boxtimes_\ED \EN$ is multi-fusion in general. So $\EM\boxtimes_\ED \EN$ is not 2-stable as 1d phase in general \cite{kong-wen-zheng}, and can flow to a stable one under local perturbations. We give an example in Example\,\ref{expl:toric-2}. But when both $\EM$ and $\EN$ are fusion, $\EM\boxtimes_\ED \EN$ is stable as a 2d phase with two 1d defects.
\end{rem}

\begin{rem}
We have shown that $\EM \times_{\ED} \EN\simeq \EM \boxtimes_{\ED} \EN$ for anomaly-free $\EC,\ED,\EE,\EM,\EN$. We believe that the equivalence still holds for anomalous $\EC,\ED,\EE,\EM,\EN$. 
\end{rem}

We will illustrate this correspondence between the dimensional reduction processes in topological orders and the theory of factorization homology in a few concrete lattice models.
Example \ref{expl:toric-1}, \ref{expl:toric-2} and \ref{expl:lw-mod} have already appeared in \cite{kong-wen-zheng}, but we put it in a new context. In particular, we emphasize the correspondence to the $\otimes$-excision property and the pushforward property of factorization homology.

\begin{expl} \label{expl:toric-1} (Toric code model, I)
Consider a narrow band of toric code model depicted in Fig.\,\ref{fig:toric} (a), in which the left side is the smooth boundary and the right side is the rough boundary \cite{bk}. In this case, note that $e$-particles condense on the rough boundary and $m$-particles condense on the smooth boundary. As a consequence, no particle survives except the trivial one. Therefore, this narrow band of toric code model realizes the trivial 1d topological order $\bk$. On the other hand, the particles on the smooth boundary form the unitary fusion category $\fun_{\rep(\Zb_2)}(\rep(\Zb_2), \rep(\Zb_2))$, those on the rough boundary form $\fun_{\rep(\Zb_2)}(\bk, \bk)$. The bulk excitations form the Drinfeld center $Z(\rep(\Zb_2))$ of $\rep(\Zb_2)$. According to the $\otimes$-excision property and the pushforward property of factorization homology, the 1d topological order obtained by squeezing the band should be given by
\begin{align}
&\fun_{\rep(\Zb_2)}(\rep(\Zb_2), \rep(\Zb_2)) \boxtimes_{Z(\rep(\Zb_2))}
\fun_{\rep(\Zb_2)}(\bk, \bk)  \nn
&\hspace{0.3cm} \simeq \fun_{\bk}(\rep(\Zb_2) \boxtimes_{\rep(\Zb_2)} \bk,\,\, \rep(\Zb_2) \boxtimes_{\rep(\Zb_2)} \bk) \nn
&\hspace{0.3cm}\simeq \fun_{\bk}(\bk,\,\, \bk) \simeq \bk,  \label{eq:toric-code-1}
\end{align}
where we have used Eq.\,(\ref{eq:MMNN}) in the first step. Therefore, the theory of factorization homology coincides with dimensional reduction in topological orders in this case.
\end{expl}

\begin{figure}[bt]
\centerline{
\begin{tabular}{@{}c@{\quad\quad\quad\quad\quad\quad}c@{}}
\raisebox{-50pt}{
  \begin{picture}(95,130)
   \put(0,0){\scalebox{4}{\includegraphics{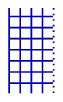}}}
    \put(50,110)     {\scriptsize $ \mbox{rough boundary} $}
  \end{picture}} &
\raisebox{-50pt}{
  \begin{picture}(75,130)
   \put(0,0){\scalebox{4}{\includegraphics{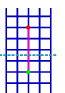}}}
   \put(0,0){
     \setlength{\unitlength}{.75pt}\put(-18,-19){
     \put(66, 130)       {\scriptsize $ \bar{e} $}
     \put(66, 50)       {\scriptsize $ e $}
     \put(5,80)     {\scriptsize $m$}
     \put(110,80)    {\scriptsize $m$}
     \put(-40,15)     {\scriptsize $ \mbox{smooth boundary} $}
     }\setlength{\unitlength}{1pt}}
  \end{picture}} \\
(a) &  (b)
\end{tabular}}
\caption{In (a), a narrow band of a toric code model bounded by two different gapped boundaries; in (b), both boundaries are the smooth boundaries.  
}
\label{fig:toric}
\end{figure}

\begin{expl} \label{expl:toric-2} (Toric code model, II)
Consider a narrow band of toric code model depicted in Fig.\,\ref{fig:toric} (b), where the both sides are the smooth boundaries. In this case, only $m$-particles are condensed, and $e$-particles survive in the 1d phase. One can create a pair of $e$-particles by the string operator illustrated in Fig.\,\ref{fig:toric} (b). But now this $e$-string is detectable by the string operator that creates a pair of $m$-particles. This $m$-string operator is local when the band is narrow. As a consequence, the vacuum of the 1d phase is two-fold degenerate \cite{hw}. Moreover, two ends of the $e$-string correspond to two different particle types because they connect two different vacuum in different ways with respect to the orientation of the 1d space manifold. We use $e$ and $\bar{e}$ to distinguish them. Therefore, excitations in this narrow band of toric model should form the following UMFC of the matrix type
\be \label{eq:matrix-hilb}
\left( \begin{array}{cc}  \bk & \bk  \\
 \bk & \bk  \end{array} \right)
\ee
with the fusion product given by the matrix multiplication.

On the other hand, according to the properties of factorization homology, the resulting 1d phase should be given by the UMFC:
\begin{align}
&\fun_{\rep(\Zb_2)}(\rep(\Zb_2), \rep(\Zb_2)) \boxtimes_{Z(\rep(\Zb_2))}
\fun_{\rep(\Zb_2)}(\rep(\Zb_2), \rep(\Zb_2)) \nn
&\hspace{0.3cm}\simeq \fun_{\bk}(\rep(\Zb_2) \boxtimes_{\rep(\Zb_2)} \rep(\Zb_2),\,\, \rep(\Zb_2) \boxtimes_{\rep(\Zb_2)} \rep(\Zb_2)) \nn
&\hspace{0.3cm}\simeq  \fun_{\bk}(\rep(\Zb_2),\,\, \rep(\Zb_2)), \label{eq:toric-code-ACB}
\end{align}
which is equivalent to Eq.\,(\ref{eq:matrix-hilb}). Therefore, the theory of factorization homology coincides with dimensional reduction in topological orders in this case. Also note that due to the vacuum degeneracy, such obtained 1d phase (Eq.\,(\ref{eq:matrix-hilb})) is not stable. It can flow to the only stable one $\bk$ under the perturbation by local operators (such as the $m$-string operators).
\end{expl}

\begin{expl} \label{expl:lw-mod} (Levin-Wen Models)
Consider a Levin-Wen type of lattice model depicted in Fig.\,\ref{fig:lw-mod} (b), the bulk lattice defined by a unitary fusion category $\EC$, the upper/lower boundary lattice is defined by an indecomposable right $\EC$-module $\EN$/$\EM$. The excitations in the bulk are given by the Drinfeld center $Z(\EC)$ of $\EC$, the excitations on the $\EM$-boundary by $\fun_{\EC^\rev}(\EM, \EM)$, those on $\EN$-boundary by $\fun_{\EC^\rev}(\EN, \EN)$ and the defect junction (the purple dot) by a $\EC$-module functor $f\in \fun_{\EC^\rev}(\EM, \EN)$. When the defect junction is viewed as a 0d topological order (by including the action of nearby excitations on $f$), it is given by the pair $(\fun_{\EC^\rev}(\EM, \EN), f)$. By folding the two boundaries along two dotted arrow, we obtain Fig.\,\ref{fig:lw-mod} (b), in which the 1d phase $\EE$ should be given by
\begin{align} \label{eq:lw-mod-dim-red}
\EE &= \fun_{\EC^\rev}(\EN, \EN) \boxtimes_{Z(\EC)} \fun_{\EC^\rev}(\EM, \EM)^\rev
\end{align}
according to the $\otimes$-excision property and the pushforward property of factorization homology. On the other hand, when we fold $\EM$-boundary upwards and flip its orientation, the right $\EC$-module $\EM$ becomes the left $\EC$-module $\EM^\op$. It amounts to a Levin-Wen type of lattice model defined by $\bk$-lattices together with a domain wall defined by $\bk$-$\bk$-bimodule $\EM^{\op}\boxtimes_\EC \EN$.
The excitations on this narrow band should be given by
\begin{align}  \label{eq:lw-mod-dim-red-2}
\fun_\bk(\EN\boxtimes_\EC \EM^{\op}, \EN\boxtimes_\EC \EM^{\op}).
\end{align}
Using Eq.\,(\ref{eq:MMNN}), it is clear that Eq.\,(\ref{eq:lw-mod-dim-red}) and Eq.\,(\ref{eq:lw-mod-dim-red-2}) are canonically equivalent. It is also interesting to note that $\EE$ is nothing but the center of the 0-disk algebra $(\fun_{\EC^\rev}(\EM, \EN),f)$ \cite{kong-wen-zheng}.
\end{expl}

\begin{figure}[bt]
\centerline{
\begin{tabular}{@{}c@{\quad\quad\quad\quad\quad}c@{}}
\raisebox{-0pt}{
  \begin{picture}(95,120)
   \put(0,0){\scalebox{1}{\includegraphics{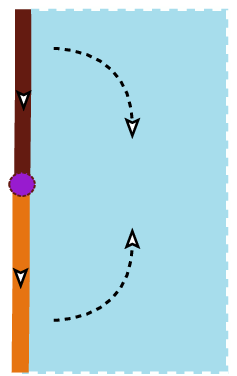}}}
   \put(0,0){
     \setlength{\unitlength}{.75pt}\put(-18,-19){
     \put(-15, 167)       {\scriptsize $ \fun_{\EC^\rev}(\EN, \EN) $}
     \put(-15, 10)     {\scriptsize $ \fun_{\EC^\rev}(\EM, \EM) $}
     \put(-55, 94)   {\scriptsize $(\fun_{\EC^\rev}(\EM, \EN), f)$}
     \put(80, 148)     {\scriptsize $ Z(\EC) $}
          }\setlength{\unitlength}{1pt}}
  \end{picture}}
  &
  \raisebox{-20pt}{
  \begin{picture}(105,120)
   \put(0,25){\scalebox{1}{\includegraphics{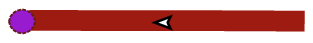}}}
   \put(0,25){
     \setlength{\unitlength}{.75pt}\put(-18,-19){
     \put(-20, 45)       {\scriptsize $ (\fun_{\EC^\rev}(\EM, \EN),f) $}
     \put(82, 69) {\scriptsize  $\EE$}
     }\setlength{\unitlength}{1pt}}
  \end{picture}} \\
(a) &  (b)
\end{tabular}}
\caption{This figure depicts a dimensional reduction process in a Levin-Wen type of lattice model discussed in Example\,\ref{expl:lw-mod}. $\EE$ is given by (\ref{eq:lw-mod-dim-red}). The arrow on the boundary of (b) indicate that we write the fusion of boundary excitations from top to bottom as a tensor product from left to right on a horizontal line.
}
\label{fig:lw-mod}
\end{figure}


\smallskip
{\bf Process 3}: Without loss of generality, we consider the configuration depicted in Fig.\,\ref{fig:1d-fhomology}, in which the bulk phase is given by $Z(\EC)$, where $\EC$ is a unitary fusion category. By Thm.\,\ref{thm:kz}, the only possible anomaly-free boundaries are those UMFC's that are Morita equivalent to $\EC$. Therefore, they are given by $\fun_{\EC}(\EK,\EK)$ for some left $\EC$-module $\EK$. In Fig.\,\ref{fig:1d-fhomology}, we give three such anomaly-free boundaries. If $\EP$ and $\EQ$ are anomaly-free 2-codimensional defects, they are uniquely determined by the anomaly-free condition Eq.\,(\ref{eq:af-cond}). Applying Eq.\,(\ref{eq:MMNN}) and Eq.\,(\ref{eq:xRy}), we obtain that $\EP=(\fun_\EC(\EL, \EM), f)$ and $\EQ=(\fun_\EC(\EM, \EN),g)$ for an arbitrary choice of $f$ and $g$. On the one hand, the $\otimes$-excision property of factorization homology predicts that the triple $(\EQ,\fun_\EC(\EM,\EM), \EP)$ should combine to a single 2-codimensional defect given by
\be \label{eq:gf}
(\EQ\boxtimes_{\fun_\EC(\EM,\EM)} \EP, g\boxtimes_{\fun_\EC(\EM,\EM)} f) \simeq
(\fun_\EC(\EL, \EN), g\circ f).
\ee
On the other hand, the dimensional reduction process should give again an anomaly-free 2-codimensional defect, which has to be $(\fun_\EC(\EL, \EN), x)$ for some object $x\in \fun_\EC(\EL, \EN)$. We show in Example\,\ref{expl:1d-fhomology} via concrete lattice models that $x$ has to be $g\circ f$.

\begin{expl} \label{expl:1d-fhomology} (Dimensional reduction from 1d to 0d)
Consider a Levin-Wen type of lattice model with gapped boundaries as depicted in Fig.\,\ref{fig:1d-fhomology}. More precisely, the lattice in the bulk is constructed from a unitary fusion category $\EC$; the lattice on the three different types of boundaries are constructed from three left $\EC$-modules $\EL,\EM,\EN$, respectively. As a consequence, the bulk excitations form a UMTC $Z(\EC)$, the boundary excitations on the three different boundaries form UMFC's $\fun_\EC(\EL,\EL)$,$\fun_\EC(\EM,\EM)$ and $\fun_\EC(\EN,\EN)$, respectively.
To determine the 2-codimensional defect between the $\EL$-boundary and the $\EM$-boundary, we first recall that the boundary conditions $\EL$ and $\EM$ uniquely determines a local operator algebra $A_{(\EM,\EL)}$ \cite{kk}, which is given by
$$
A_{\EM,\EL} := \bigoplus_{j \in \mO(\EC)}\,
\bigoplus_{\lambda,\sigma\in \mO(M)}\, \bigoplus_{\gamma,\rho\in \mO(\EL)}
\hom_\EM(j\otimes \sigma, \lambda) \otimes \hom_\EL(\gamma, j\otimes \rho)\quad \left(\text{graphically,}\quad\figbox{1.0}{edgealg0}\right).
$$
It was shown in \cite{kk} that the category of all possible 2-codimensional defects between the $\EL$-boundary and the $\EM$-boundary is given by the category of $A_{(\EM,\EL)}$-modules. The later category is equivalent to the category of $\fun_\EC(\EL,\EM)$ canonically \cite{kk}. Therefore, we have $\EP=(\fun_\EC(\EL, \EM), f)$ for an arbitrary choice of object $f$. Similarly, we have $\EQ=(\fun_\EC(\EM, \EN),g)$.

\begin{figure}[bt]
$$
\raisebox{-0pt}{
  \begin{picture}(95,90)
   \put(-20,0){\scalebox{1.2}{\includegraphics{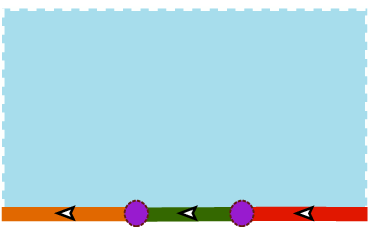}}}
   \put(-20,0){
     \setlength{\unitlength}{.75pt}\put(-18,-19){
     \put(0, 12)       {\scriptsize $ \fun_\EC(\EL, \EL) $}
     \put(160, 12)     {\scriptsize $ \fun_\EC(\EN, \EN) $}
     \put(80, 9)     {\scriptsize $ \fun_\EC(\EM, \EM) $}
     \put(78, 36)   {$\EP$}
     \put(125,36)  {$\EQ$}
     \put(90, 88)     {$ Z(\EC) $}
          }\setlength{\unitlength}{1pt}}
  \end{picture}}
$$
\caption{This figure depicts three different boundaries of an anomaly-free 2d topological order $Z(\EC)$. They are described by UMFC's $\fun_\EC(\EL, \EL)$, $\fun_\EC(\EM, \EM)$ and $\fun_\EC(\EN, \EN)$, respectively. Here $\EL,\EM,\EN$ are left $\EC$-modules.} 
\label{fig:1d-fhomology}
\end{figure}

If we squeeze the line between $\EP$ and $\EQ$ to obtain a 0d topological order, by the results in \cite{kk}, the 2-codimensional defect thus obtained must be a module over the local operator algebra $A_{\EN,\EM}$. Therefore, the category of such 2-codimensional defects must be $\fun_\EC(\EL,\EN)$. To determine the distinguished element $x$ in $\fun_\EC(\EL,\EN)$, we denoted the $A_{(\EM,\EL)}$-module associated to $f$ by $X_f$ and the
$A_{(\EN,\EM)}$-module associated to $g$ by $Y_g$. The squeezing process produces a new defect given by $Y_g\otimes_\Cb X_f$. Note that the algebra $A_{(\EN,\EL)}$ acts on the vector space $Y_g\otimes_\Cb X_f$ via a comultiplication $A_{(\EN,\EL)} \to A_{(\EN,\EM)} \otimes_\Cb A_{(\EM,\EL)}$ defined by the same formula as the one in \cite[Fig.6]{kk} (now with $\lambda_1,\lambda_2\in \EN$, $\rho,\tau\in\EM$, $\gamma_1,\gamma_2\in \EL$). It is routine to check that the object $x\in \fun_\EC(\EL,\EN)$ canonically associated to $Y_g\otimes_\Cb X_f$ is nothing but $g\circ f$. Therefore, we have shown, in this case, that the result of dimensional reduction coincides precisely with the prediction from the theory of factorization homology (see Eq.\,(\ref{eq:gf})).
\end{expl}

\begin{rem}
In this work, we restrict ourselves to the 2d systems with only anomaly-free defects of codimension 0,1,2. Using the unique bulk hypothesis proposed in \cite{kong-wen,kong-wen-zheng}, it is possible to prove the correspondence between factorization homology and the dimensional reduction processes for anomalous higher codimensional defects in higher dimensional theories.
\end{rem}


\subsection{Factorization homology and ground state degeneracy}
In this subsection, we combine results in Sec.\,\ref{sec:af}, \ref{sec:dr-fh} to argue that the following statement is true. 
\begin{quote}
Factorization homology on a closed stratified surface $\Sigma$ with an anomaly-free coefficient $A$, i.e. $\int_\Sigma A = (\bk, u_\Sigma)$, gives exactly the ground state degeneracy (GSD) of the same surface decorated by anomaly-free topological defects of codimensions 0,1,2 that are associated to $A$, i.e. $u_\Sigma = \mathrm{GSD}$. 
\end{quote} 

Using the three basic dimensional reduction processes repeatedly, together with the folding trick, we can reduce the stratifed surface $\Sigma$ decorated by anomaly-free defects associated to $A$ to a 0d topological order $X$, which is also anomaly-free because the anomaly-free condition is preserved in these processes. This 0d topological order $X$ can also be viewed as an anomaly-free 2-codimensional defect in the trivial 2d topological order. According to Remark\,\ref{rem:flip-arrow}, it can be described by a pair $(\bk, v_\Sigma)$. Since these three processes are compatible with the $\otimes$-excision property of factorization homology, we conclude that $v_\Sigma=u_\Sigma$. It remains to show that $v_\Sigma = \mathrm{GSD}$. 


Consider a 2d lattice model realization of the closed stratified surface $\Sigma$ decorated by anomaly-free defects of codimension 0,1,2 that are associated to the coefficient $A$. Such a lattice model consists of a Hilbert space $\EH_{\mathrm{tot}}$, a local structure $ \EH_{\mathrm{tot}} = \otimes_{v} \EH_v$, where $\EH_v$ is the spin space on each vertex $v$, and a Hamiltonian operator $H$. We denote the space of ground states by $\EH_{\mathrm{gs}}$. Without lose of generality, we assume $H|_{\EH_{\mathrm{gs}}}=\id_{\EH_{\mathrm{gs}}}$. Now we gradually shrink (or dimensionally reduce) the surface $\Sigma$ to a point. Then all vertices collapse to the single point. It means that the local structure is completely lost. But the total Hilbert space and the Hamiltonian remains the same. So does the space of ground states $\EH_{\mathrm{gs}}$. In this way, we obtain a lattice model realization of the 0d topological order $X$. Since a topological order only depends on the property of the ground states, picking out only $\EH_{\mathrm{gs}}$, we obtain a pair $(\EH_{\mathrm{gs}}, \id_{\EH_{\mathrm{gs}}})$. To have the complete set of observables, we should also include all the linear operators acting on $\EH$, which can be viewed as instantons or defects on the time axis \cite{kong-wen, kong-wen-zheng}. These operators form an algebra $\mathrm{End}(\EH_{\mathrm{gs}})$. Therefore, we obtain a triple $(\EH_{\mathrm{gs}}, \id_{\EH_{\mathrm{gs}}},\mathrm{End}(\EH_{\mathrm{gs}}))$. Note that this triple is exactly equivalent to data included in the pair $(\bk, \EH_{\mathrm{gs}})$. Therefore, the pair $(\bk, \EH_{\mathrm{gs}})$ also gives a characterization of the 0d topological order $X$ (see Remark\,\ref{rem:unstable}). Therefore, we must have $v_\Sigma=\EH_{\mathrm{gs}}$.

\begin{rem} \label{rem:unstable}
If $\dim_\Cb \EH_{\mathrm{gs}} >1$, the pair $(\bk, \EH_{\mathrm{gs}})$, viewed as a 0d topological order, is physically unstable because we can easily lift the GSD by introducing perturbations to the Hamiltonian $\id_\EH$. Under perturbations, we obtain the unique 0d topological order given by $(\bk, \Cb)$. However, the pair $(\bk, \EH_{\mathrm{gs}})$ for $\dim_\Cb \EH_{\mathrm{gs}} >1$ are still physically meaningful because they naturally occur as the result of the dimensional reduction of an higher dimensional topological orders. More precisely, the pair $(\bk, \EH_{\mathrm{gs}})$ is unstable as a 0d topological order when $\Sigma$ shrinks to a point, but it is stable as a 2d topological order if $\Sigma$ is slightly rescaled away from the point. This phenomenon was further explained in \cite[Example\,2.2.3 (1),\, Remark\,2.2.4]{kong-wen-zheng} and in Example\,\ref{expl:toric-2}. 
\end{rem}

\medskip


The proofs of Thm.\,\ref{thm:S2+defects} and Thm.\,\ref{thm:higher-g-defects} and Rem.\,\ref{rem:red-genus} tell us how to compute factorization homology. In the rest of this paper, we give some explicit computations of factorization homology in concrete cases (not limited to closed stratified surfaces) and compare our results on closed stratified surfaces with those on GSD in physics literature.

\subsection{Computing factorization homology: I} \label{sec:fh-comp1}

\medskip
Consider a stratified 2-disk $(\Rb^2;\Gamma_0)$ depicted in Fig.\,\ref{fig:2disk}. The closed multi-fusion $\EC$-$\ED$-bimodule structure on $\EM$ induces two monoidal functors $\EC \xrightarrow{L} \EM \xleftarrow{R} \ED$. The two target labels for 0-cells are clearly anomaly-free.

\begin{prop} \label{prop:2disk}
We have
\be \label{eq:2disk}
\int_{(\Rb^2;\Gamma_0)} \left(\EC,\ED; \EM; (\ED,d), (\EM,m)\right) \simeq \left(\EC, \,\, L^\vee(m\otimes R(d))\right),
\ee
where $L^\vee$ is the right adjoint functor of $L$.
\end{prop}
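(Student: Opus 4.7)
The plan is to reduce the stratification on $(\Rb^2,\Gamma_0)$ in three stages, each via the $\otimes$-excision property (Thm.\,\ref{thm:tensor-excision}) combined with the local moves of Example\,\ref{expl:otimes}.

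First, I move the 0-cell $(\ED,d)$ out of the interior of the $\ED$-region and onto the $\EM$-labeled 1-cell. Using Example\,\ref{expl:otimes}(4), I mark an auxiliary 0-cell on the $\EM$-1-cell (labeled by $\EM$ viewed as a 0-disk algebra) and insert within the $\ED$-region an auxiliary $\ED$-labeled 1-cell joining $(\ED,d)$ to this auxiliary 0-cell. Then the fusion move of Example\,\ref{expl:otimes}(1), via the canonical identification $\EM\boxtimes_\ED\ED\simeq\EM$ furnished by the right $\ED$-action $R:\ED\to\EM$, absorbs the auxiliary $\ED$-edge into the $\EM$-1-cell, replacing $(\ED,d)$ by a 0-cell $(\EM,R(d))$ on the $\EM$-1-cell.

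Second, the $\EM$-1-cell now carries two consecutive 0-cells $(\EM,m)$ and $(\EM,R(d))$. Using the 1-disk algebra structure of $\EM$ and $\otimes$-excision on the intermediate segment, these fuse into a single 0-cell $(\EM,y)$ with $y:=m\otimes R(d)$; the order of the tensor product is dictated by the orientation of the 1-cell and the relative position of the two 0-cells in Fig.\,\ref{fig:2disk}.

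Third, the remaining stratified 2-disk has a single $\EM$-labeled loop in $\Rb^2$ carrying one 0-cell $(\EM,y)$, bounding a compact $\ED$-disk inside and the ambient $\EC$-region outside. I then apply $\otimes$-excision along a thin $\EC$-colored collar cylinder just outside the loop: the collar has factorization homology $\fun_\bk(\EC,\EC)$ by the argument in the proof of Thm.\,\ref{thm:genus-0}; the outer $\EC$-piece contributes the standard left $\fun_\bk(\EC,\EC)$-module $\EC$; and the inner piece (the $\ED$-filled loop with 0-cell $(\EM,y)$) contributes a distinguished right $\fun_\bk(\EC,\EC)$-module object of $\EC$ which, via the adjunction $L\dashv L^\vee$ and the closedness of the $\EC$-$\ED$-bimodule structure on $\EM$ (Cor.\,\ref{cor:tensor-fun}), is identified with $L^\vee(y)$. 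The resulting tensor product then yields $(\EC,L^\vee(y))=(\EC,L^\vee(m\otimes R(d)))$, as claimed. The main obstacle is precisely this last identification, which requires matching the bimodule adjunction $L\dashv L^\vee$ to the geometric capping-off of the $\EM$-loop by the $\ED$-disk inside; the cleanest route is via the universal property of the right adjoint $L^\vee$ combined with Cor.\,\ref{cor:tensor-fun}.
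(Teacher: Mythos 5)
Your first two steps (transporting $(\ED,d)$ onto the wall and fusing it with $(\EM,m)$ into a single 0-cell labeled $(\EM,\,m\otimes R(d))$) are fine and coincide with the opening move of the paper's proof. The problem is your third step, which is where the entire content of the proposition sits. The excision bookkeeping is off: after cutting $\Rb^2$ along a circle just outside the loop, the outer piece is an open annulus colored $\EC$, whose factorization homology is $\fun_\bk(\EC,\EC)$, not $\EC$; and if the outer piece really did contribute the left module $\EC$, then tensoring with an inner piece equal to $(\EC,L^\vee(y))$ over $\fun_\bk(\EC,\EC)$ would produce $\bk$, i.e.\ you would be computing the sphere of Thm.~\ref{thm:genus-0} rather than the plane. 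With the correct identification the decomposition collapses to the assertion that the factorization homology of the inner capped region is $(\EC,\,L^\vee(y))$ --- which is precisely the statement to be proved, and which you only assert: the adjunction $L\dashv L^\vee$ together with Cor.~\ref{cor:tensor-fun} does not by itself identify either the underlying category of the inner piece with $\EC$ or its distinguished object with $L^\vee(y)$.

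What is missing is the computation the paper actually performs. Cut the $\EM$-loop at the marked point and at an auxiliary 0-cell labeled $(\EM,\one_\EM)$, merge the two $\EM$-arcs together with the enclosed $\ED$-disk into a single 1-cell labeled $\EE:=\EM\boxtimes_\ED\EM^\rev$ (the move of Fig.~\ref{fig:loop}), and contract it (Fig.~\ref{fig:trees}) to get $\bigl(\EM\boxtimes_\EE\EM,\ (m\otimes R(d))\boxtimes_\EE\one_\EM\bigr)$. Then $\EE\simeq\fun_\EC(\EM,\EM)$ by Cor.~\ref{cor:tensor-fun}, $\EM$ is an invertible $\EC$-$\EE^\rev$-bimodule (Rem.~\ref{rem:inv-bimod}), and the explicit duality functor $v$ of Eq.~(\ref{eq:v}) yields an equivalence $\EM\boxtimes_\EE\EM\simeq\EC$ under which $x\boxtimes_\EE y\mapsto [y,x^\ast]_\EC^\ast\simeq[x^\ast,y]_\EC\simeq[\one_\EM,x\otimes y]_\EC\simeq L^\vee(x\otimes y)$, the first isomorphism using unitarity of $\EC$ and $\EM$. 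Setting $x=m\otimes R(d)$ and $y=\one_\EM$ gives $L^\vee(m\otimes R(d))$. This inner-hom/duality calculation is exactly the ``main obstacle'' you name at the end of your proposal, and it is the step your argument leaves unproved.
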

\pf
First, we fuse $(\EM,m)$ and $(\ED,d)$ which yields $(\EM,m\otimes R(d))$. Then, apply the processes depicted in Fig.\,\ref{fig:loop} and Fig.\ref{fig:trees} to contract the loop to a point. The finial result is
$$
\int_{(\Rb^2;\Gamma_0)}  \left(\EC,\ED; \EM; (\ED,d), (\EM,m)\right) 
\simeq (\EM \boxtimes_\EE \EM, \,\, \left( m\otimes R(d))\boxtimes_\EE\one_\EM \right)
$$ 
where $\EE = \EM\boxtimes_\ED \EM^\rev$. Since $\EE^\rev$ shares the same Drinfeld center as $\EC$, $\EE \simeq \fun_\EC(\EM,\EM)$ by Cor.\,\ref{cor:tensor-fun}. Moreover, $\EM$ is an invertible $\EC$-$\EE^\rev$-bimodule (see Rem.\,\ref{rem:inv-bimod}). We have $\EM \boxtimes_\EE \EM \simeq \EM^\op \boxtimes_\EE \EM \simeq \EM\boxtimes_{\EE^\rev} \EM^\op \simeq \EC$ where the last equivalence is defined by $v$ in Eq.\,(\ref{eq:v}). This composite equivalence maps as follows:
$$ \label{eq:2disk-pf}
x\boxtimes_\EE y \mapsto [y,x^\ast]_\EC^\ast \simeq [x^\ast,y]_\EC \simeq [\one_\EM, x\otimes y]_\EC \simeq L^\vee(x\otimes y),
$$
where the isomorphism $[y,x^\ast]_\EC^\ast \simeq [x^\ast,y]_\EC$ is due to the unitarity of $\EC$ and $\EM$. Setting $x=m\otimes R(d)$ and $y=\one_\EM$, we obtain Eq.\,(\ref{eq:2disk}) immediately.
\epf

\begin{figure}[bt]
$$
\raisebox{-0pt}{
  \begin{picture}(95,100)
   \put(-20,0){\scalebox{2.5}{\includegraphics{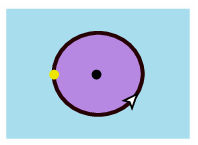}}}
   \put(-20,0){
     \setlength{\unitlength}{.75pt}\put(-8,-9){
     \put(165,120)  {$\EC$}
     \put(96,62)   { $(\ED,d)$}
     \put(105, 95)     {$ \ED $}
     \put(16,73)      {$(\EM,m)$}
     \put(140,39)      {$\EM$}
          }\setlength{\unitlength}{1pt}}
  \end{picture}}
$$
\caption{This figure depicts a stratified 2-disk with the two 2-cells labeled by UMTC's $\EC$ and $\ED$, respectively; the unique 1-cell labeled by a closed multi-fusion $\EC$-$\ED$-bimodule $\EM$; the two 0-cells labeled by $(\ED,d)$ and $(\EM,m)$, for $d\in \ED$, $m\in \EM$, respectively.}
\label{fig:2disk}
\end{figure}

\void{
\begin{rem}
Since $\EC$ is semisimple, there is an (not canonical) isomorphism $L^\vee(m^\ast)\simeq L^\vee(m)^\ast$ as objects for all $m\in \EM$. This is because we have the following identities
$$
\hom_\EC(i, L^\vee(p^\ast)) \simeq \hom_\EM(p, L(i^\ast)) \simeq 
\hom_\EM(L(i^\ast),p)^\ast \simeq \hom_\EM(L^\vee(p)^\ast, i)^\ast \simeq 
\hom_\EC(i, L^\vee(p)^\ast)
$$
for all $i\in \mO(\EC), p\in \mO(\EM)$. Therefore, Eq.\,(\ref{eq:2disk-1}) can be rewritten as
\be \label{eq:2disk}
\int_{(\Rb^2;\Gamma_0)} \left(\EC,\ED; \EM; (\ED,d), (\EM,m)\right) \simeq \left(\EC, \,\, L^\vee(m\otimes R(d))\right).
\ee
\end{rem}
}

Let $A$ be a connected commutative separable algebra in a UMTC $\EC$. Let $\EC_A$ be the category of right $A$-modules in $\EC$. A right $A$-module $M$ in $\EC$ (an object $M$ equipped with a unital right $A$-action $\mu_M: M \otimes A \to M$) is called {\it local} if $\mu_M \circ c_{A,M} \circ c_{M,A} = \mu_M$.  Let $\EC_A^0$ be the category of local right $A$-modules in $\EC$. According to the anyon condensation theory \cite{anyon}, a boson condensation of $A$ in the 2d topological phase $\EC$ produces a new phase with bulk excitations given by $\EC_A^0$ and gapped domain wall (a 1d phase) with wall excitations given by the unitary fusion category $\EC_A$. This motivates us to consider a special case of Prop\,\ref{prop:2disk}.

\begin{cor} \label{cor:2disk}
In the setting of Prop.\,\ref{prop:2disk}, if $\ED=\EC_A^0$, $\EM=\EC_A$ for a connected commutative separable algebra $A$ in $\EC$, $d=\one_\ED$ and $m=\one_\EM$, then we have
$$
\int_{(\Rb^2;\Gamma_0)} \left(\EC,\EC_A^0; \EC_A; (\EC_A^0,\one_{\EC_A^0}), (\EC_A, \one_{\EC_A})\right) \simeq (\EC, \,\, A).
$$
\end{cor}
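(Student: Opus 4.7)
The plan is to apply Proposition \ref{prop:2disk} directly, so the real task is to identify the two monoidal functors $\EC \xrightarrow{L} \EC_A \xleftarrow{R} \EC_A^0$ coming from the closed multi-fusion $\EC$-$\EC_A^0$-bimodule structure on $\EC_A$. The anyon-condensation dictionary (see \cite{anyon} and the paragraph preceding this corollary) dictates that $L$ is the free module functor $X\mapsto X\otimes A$, and $R$ is the canonical inclusion $\EC_A^0 \hookrightarrow \EC_A$ of local right $A$-modules into all right $A$-modules.

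With these identifications in hand, I first compute $R(d)$. Since $A$ is a connected commutative separable algebra, it carries a canonical local right $A$-module structure and serves as the tensor unit of both $\EC_A^0$ and $\EC_A$; hence $R(d) = R(\one_{\EC_A^0}) = \one_{\EC_A} = A$. The monoidal product in $\EC_A$ is $\otimes_A$, so $m \otimes R(d) = \one_{\EC_A} \otimes_A A \simeq A$ as an object of $\EC_A$.

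Next, $L^\vee$, being the right adjoint of the free module functor $X\mapsto X\otimes A$, is the forgetful functor $U:\EC_A \to \EC$ via the standard free-forgetful adjunction $\hom_{\EC_A}(X\otimes A, M) \simeq \hom_\EC(X, U(M))$. Applying $U$ gives $L^\vee(A) = U(A) = A$ as an object of $\EC$. Plugging into Proposition \ref{prop:2disk} then yields $(\EC, A)$, as claimed.

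The only step beyond routine unwinding is the identification of the bimodule structure functors $L$ and $R$; once this is pinned down by the anyon-condensation dictionary, the corollary follows from the free-forgetful adjunction together with the fact that $A$ is the tensor unit of $\EC_A$. I do not anticipate any technical obstacle beyond being careful with the distinction between $\otimes$ (in $\EC$) and $\otimes_A$ (in $\EC_A$).
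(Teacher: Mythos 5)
Your proposal is correct and follows essentially the same route as the paper: the paper's proof likewise identifies $R:\EC_A^0\to\EC_A$ as the canonical embedding and $L^\vee$ as the forgetful functor $\EC_A\to\EC$, so that $L^\vee(\one_{\EC_A}\otimes R(\one_{\EC_A^0}))\simeq A$. Your extra care in spelling out $L$ as the free module functor, the free--forgetful adjunction, and the identification $\one_{\EC_A}=A$ with product $\otimes_A$ is just a more explicit unwinding of the same argument.
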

\pf
In this case, $R: \EC_A^0 \to \EC_A$ is the canonical embedding and $L^\vee$ is the forgetful functor $\EC_A \to \EC$. Therefore, $L^\vee(\one_\EM \otimes R(\one_\ED))\simeq A$.
\epf

\begin{rem}
The closed 2-disk with the 2d phase $\EC_A^0$ in the interior and the 1d phase $\EC_A$ on its boundary behave like the object $A$ in $\EC$ was first noticed by Davide Gaiotto \cite{gaiotto}. Cor.\,\ref{cor:2disk} shows that the mathematical mechanism behind this intuition is factorization homology.
\end{rem}

If we compactify the stratified 2-disk $(\Rb^2;\Gamma_0)$ depicted in Fig.\,\ref{fig:2disk} by adding a point at infinity, we obtain a stratified sphere $(S^2;\Gamma_0)$ with the same coefficient as shown in Fig.\,\ref{fig:2disk}.
\begin{cor}
We have
$$
\int_{(S^2;\Gamma_0)} (\EC, \ED; \EM; (\ED,d), (\EM,m)) \simeq \left( \bk, \,\, \hom_\EC(\one_\EC, L^\vee(m\otimes R(d)) \right).
$$
\end{cor}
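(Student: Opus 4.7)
The plan is to realize the stratified sphere $(S^2;\Gamma_0)$ as a collar-gluing of the stratified $2$-disk $(\Rb^2;\Gamma_0)$ of Fig.\,\ref{fig:2disk} with a small open disk $D_\infty$ containing the point at infinity, and then to apply the $\otimes$-excision property. Since the point at infinity lies in the outer $\EC$-labeled $2$-cell, the disk $D_\infty$ carries only the trivial coefficient $\EC$ (no $0$- or $1$-strata), and the overlap $D_\infty\cap(\Rb^2;\Gamma_0)\cong S^1\times\Rb$ likewise sits entirely in the $\EC$-region. Thm.\,\ref{thm:tensor-excision} then yields
\[
\int_{(S^2;\Gamma_0)}A \;\simeq\; \Bigl(\int_{(\Rb^2;\Gamma_0)}A\Bigr)\boxtimes_{\int_{S^1\times\Rb}\EC}\Bigl(\int_{D_\infty}\EC\Bigr).
\]

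Next I would evaluate the three pieces. By Prop.\,\ref{prop:2disk} the first factor is $(\EC,L^\vee(m\otimes R(d)))$, and clearly the last factor is $(\EC,\one_\EC)$. As recorded in the proof of Thm.\,\ref{thm:genus-0}, one has $\int_{S^1\times\Rb}\EC\simeq \EC\boxtimes_{Z(\EC)}\EC^\rev\simeq\fun_\bk(\EC,\EC)$, and the two residual actions on the pair of copies of $\EC$ are respectively the tautological left action and the right action obtained by identifying $\EC\simeq\EC^\op$ via duality.

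I then reuse verbatim the final step of the proof of Thm.\,\ref{thm:genus-0}: invoking that $\EC$ is an invertible $\bk$-$\fun_\bk(\EC,\EC)^\rev$-bimodule (Rem.\,\ref{rem:inv-bimod}) together with the duality map $v$ of Eq.\,(\ref{eq:v}) produces a canonical equivalence
\[
\EC\boxtimes_{\fun_\bk(\EC,\EC)}\EC \;\simeq\; \bk,\qquad x\boxtimes_{\fun_\bk(\EC,\EC)}y \longmapsto \hom_\EC(x^\ast,y).
\]
Substituting $x=L^\vee(m\otimes R(d))$ and $y=\one_\EC$ and applying the duality isomorphism $\hom_\EC(a^\ast,\one_\EC)\simeq\hom_\EC(\one_\EC,a)$ produces precisely the distinguished object $\hom_\EC(\one_\EC,L^\vee(m\otimes R(d)))$, as claimed.

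Main obstacle: there is essentially none, since the statement is the concatenation of Prop.\,\ref{prop:2disk} with the sphere-gluing argument already executed in the proof of Thm.\,\ref{thm:genus-0}. The only delicate point is bookkeeping the bimodule conventions carefully enough to ensure that the $\boxtimes_{\fun_\bk(\EC,\EC)}$-formula applies, and that the terminal duality manipulation produces $\hom_\EC(\one_\EC,L^\vee(m\otimes R(d)))$ rather than its linear dual.
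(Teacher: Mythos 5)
Your argument is correct and is essentially the paper's proof: the paper simply says ``combine Prop.\,\ref{prop:2disk} and Thm.\,\ref{thm:genus-0}'' (the $n=1$ case of the sphere formula), while you inline the genus-zero computation through the same collar-gluing over the annulus and the same equivalence $\EC\boxtimes_{\fun_\bk(\EC,\EC)}\EC\simeq\bk$, $x\boxtimes y\mapsto\hom_\EC(x^\ast,y)$. The only cosmetic deviation is that your distinguished objects occupy the opposite factors of the relative tensor product, which your final duality step $\hom_\EC(a^\ast,\one_\EC)\simeq\hom_\EC(\one_\EC,a)$ correctly reconciles.
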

\pf
Combine Prop.\,\ref{prop:2disk} and Thm.\,\ref{thm:genus-0}.
\epf

Let $\Sigma$ be a connected compact surface whose boundary consists of $n$ circles, viewed as a stratified surface $(\Sigma,\partial\Sigma)$. Let $A$ be a coefficient system on $\Sigma$ that labels the unique 2-cell by a UMTC $\EC$ and labels the 1-cells by $\EC_{A_1}, \EC_{A_2}, \cdots,\EC_{A_n}$, where $A_i$ are Lagrangian algebras in $\EC$. A Lagrangian algebra $A$ in $\EC$ is a connected commutative separable algebra $A$ such that $\EC_A^0\simeq \bk$ \cite{dmno}. Note that a gapped boundaries of $\EC$ is given by $\EC_A$ for a unique Lagrangian algebra $A$ in $\EC$ \cite{anyon}. The factorization homology of $\Sigma$ is the same as that of a closed stratified surface $\Sigma'$ obtained by attaching 2-disks to the boundary of $\Sigma$ and with the new 2-cells labeled by $\bk$. Combining Cor.\,\ref{cor:2disk} and Thm.\,\ref{thm:high-genus}, we obtain the following result. 
\begin{cor} \label{cor:pants}
Let $g$ be the genus of the surface $\Sigma$. We have
$$
\int_\Sigma A \simeq \int_{(\Sigma';S^1\cup\cdots\cup S^1)} (\EC,\bk,\cdots,\bk; \EC_{A_1}, \cdots, \EC_{A_n};\emptyset) \simeq \left( \bk, \,\, \hom_\EC(\one_\EC, A_1\otimes \cdots \otimes A_n\otimes (\oplus_{i\in \mO(\EC)} i\otimes i^\ast)^{\otimes g} ) \right).
$$
\end{cor}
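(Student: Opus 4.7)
The plan has two steps, corresponding exactly to the two equivalences in the statement. The first equivalence is immediate from the remark preceding the corollary: attaching a standard 2-disk labeled by $\bk$ along each of the $n$ boundary circles of $\Sigma$ produces a closed stratified surface $\Sigma'$ in which those circles remain as 1-strata (still colored by $\EC_{A_1},\ldots,\EC_{A_n}$) and sit between the original $\EC$-colored 2-cell and the new $\bk$-colored 2-cells. Because $\bk$ is the tensor unit of $\EV_{\mathrm{uty}}$, one checks that this does not affect the colimit defining $\int_\Sigma A$. Note also that the coefficient system is anomaly-free in the sense of Def.\,\ref{def:af-cond}: since each $A_i$ is Lagrangian, $Z(\EC_{A_i})\simeq \overline{\EC}\boxtimes Z(\EC_{A_i}^0)\simeq\overline{\EC}\boxtimes\bk$, so $\EC_{A_i}$ is indeed a closed multi-fusion $\EC$-$\bk$-bimodule.

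For the second equivalence, the plan is to eliminate each $\bk$-capped boundary loop by replacing it with a single marked point in the $\EC$-region labeled by $(\EC,A_i)$. The key observation is that a small closed neighborhood of the $i$-th attached disk, together with its bounding circle labeled $\EC_{A_i}$, matches (after inserting auxiliary cells as in Example\,\ref{expl:otimes}(4)) the stratified 2-disk of Fig.\,\ref{fig:2disk} with $\ED=\EC_{A_i}^0\simeq\bk$ and $\EM=\EC_{A_i}$. Explicitly, I would add a 0-cell labeled $(\bk,\one_\bk)$ at an interior point of the $\bk$-cap, a 0-cell labeled $(\EC_{A_i},\one_{\EC_{A_i}})$ on the bounding circle, and a radial 1-cell labeled $\bk$ joining them, with orientation chosen to match Fig.\,\ref{fig:2disk}. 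Cor.\,\ref{cor:2disk} applied locally to this configuration replaces it by a single 0-cell labeled $(\EC,A_i)$ inside the $\EC$-colored 2-cell. The general principle formulated at the end of Example\,\ref{expl:otimes}, namely that a stratified embedding of a 2-disk compatible with both coefficient systems permits such a $\otimes$-excision replacement, guarantees that this local substitution computes the global factorization homology.

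Carrying out this replacement for all $n$ boundary components converts $\Sigma'$ into a closed oriented surface of genus $g$ with a single 2-cell labeled $\EC$, no 1-strata, and $n$ marked 0-cells labeled $(\EC,A_1),\ldots,(\EC,A_n)$. At this point Thm.\,\ref{thm:high-genus} applies directly and yields
$$
\left(\bk,\; \hom_\EC\!\bigl(\one_\EC,\; A_1\otimes\cdots\otimes A_n\otimes(\oplus_{i\in\mO(\EC)} i\otimes i^\ast)^{\otimes g}\bigr)\right),
$$
as required.

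The main place where care is needed is the local replacement step: one must verify that the auxiliary insertions are legitimate anomaly-free additions (which follows from Example\,\ref{expl:otimes}(4) and Lem.\,\ref{lem:stra-af}), that the orientations chosen on the radial 1-cell agree with the conventions of Prop.\,\ref{prop:2disk}, and that the resulting distinguished object really is $A_i$. This last point is the content of Cor.\,\ref{cor:2disk}, since with $R\colon\bk\to\EC_{A_i}$ the unit-inclusion and $L^\vee\colon\EC_{A_i}\to\EC$ the forgetful functor, one has $L^\vee(\one_{\EC_{A_i}}\otimes R(\one_\bk))\simeq L^\vee(\one_{\EC_{A_i}})\simeq A_i$. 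Once these verifications are in place, the proof is a direct combination of Cor.\,\ref{cor:2disk} (applied $n$ times) with Thm.\,\ref{thm:high-genus}.
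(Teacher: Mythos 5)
Your proposal is correct and follows essentially the same route as the paper, whose entire proof is the one-line ``combine Cor.\,\ref{cor:2disk} and Thm.\,\ref{thm:high-genus}'': cap the boundary circles with $\bk$-labeled disks, use Cor.\,\ref{cor:2disk} locally (via the replacement principle of Example\,\ref{expl:otimes}) to trade each capped boundary for a marked point labeled $(\EC,A_i)$, and finish with Thm.\,\ref{thm:high-genus}. Your spelled-out verifications (anomaly-freeness of the labels, the auxiliary cell insertions, and $L^\vee(\one\otimes R(\one))\simeq A_i$) are exactly the details the paper leaves implicit, with only the cosmetic slip that the closedness of $\EC_{A_i}$ as a $\EC$-$\bk$-bimodule should be phrased via $Z(\EC_{A_i})\simeq\overline{\EC}\boxtimes\EC_{A_i}^0$ rather than with an extra Drinfeld center on $\EC_{A_i}^0$.
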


\begin{rem}
Cor.\,\ref{cor:pants} provides a closed formula for GSD for topological orders on surfaces with boundaries. It generalizes the results in \cite{hw}. The result of factorization homology is completely compatible with the idea of ``$\mathrm{GSD}=\# \,\, \mbox{of confined anyons}$'' proposed in \cite{hw,lww}. 
\end{rem}

\subsection{Computing factorization homology: II} \label{sec:fh-comp2}

In this subsection, we consider another example: a stratified open cylinder $\Sigma$ with an anomaly-free coefficient system.

\medskip
In general, a stratified open cylinder can be very complicated. By applying the processes described by Example\,\ref{expl:otimes} (1),(2),(3),(4) repeatedly, however, we can always reduce the situation to a stratified open cylinder $(\Rb\times S^1;\Gamma_1)$ depicted in Fig.\,\ref{fig:cylinder-defect}. We will discuss how to compute the factorization homology of this stratified open cylinder $(\Rb\times S^1;\Gamma_1)$.

\begin{figure}[bt]
\centerline{
\raisebox{-0pt}{
  \begin{picture}(200,100)
   \put(20,20){\scalebox{3}{\includegraphics{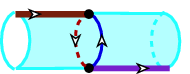}}}
   \put(20,20){
     \setlength{\unitlength}{.75pt}\put(-8,-9){
     \put(195,47)  {$\ED$}
     \put(134,47)  {$\EL$}
     \put(78,47)    {$\EK$}
     \put(55, 91)     {$ \EM $}
     \put(48, 27)     {$ \EC $}
     \put(98, 93)      {$(\EP,p)$}
     \put(98,2)    {$(\EQ,q)$}
     \put(165,2)      {$\EN$}
          }\setlength{\unitlength}{1pt}}
  \end{picture}}
\void{
&  
\raisebox{-0pt}{
  \begin{picture}(150,100)
   \put(10,20){\scalebox{3}{\includegraphics{pic-high-g-2.eps}}}
   \put(10,20){
     \setlength{\unitlength}{.75pt}\put(-8,-9){
     \put(90, 90)      {$(\EP,p)$}
     \put(147,87)     {$\ED$}
     \put(90,0)    {$(\EQ,q)$}
     \put(150, 62)     {$ Z(\ED) $}
     \put(107,43)    {$\EK^\rev\boxtimes \EL$}
     \put(42, 88)     {$ \EM $}
     \put(20,50)      {$Z(\EC)$}
     \put(148,0)      {$\EN$}
     \put(42,0)        {$\EC$}
          }\setlength{\unitlength}{1pt}}
  \end{picture}}  \\
  (a) & (b)
\end{tabular}}
}
\caption{
This figure shows a generic case of a stratified open cylinder $(\Rb\times S^1;\Gamma_1)$ after we have applied the processes described by Example\,\ref{expl:otimes} (1),(2),(3),(4). 
}
\label{fig:cylinder-defect}
\end{figure}

In this case, an anomaly-free coefficient system provides target labels to all cells.
In particular, the 2-cell labels $\EC$ and $\ED$ are UMTC's; the 1-cell labels $\EK,\EL,\EM,\EN$ are closed multi-fusion bimodules;
 the 0-cell labels $(\EP,p)$ and $(\EQ,q)$, where $\EP$ and $\EQ$ are uniquely determined by other labels up to equivalence by the anomaly-free condition. There is a unique (up to equivalence) left $\EC$-module $\EM_1$ and a right $\ED$-module $\EN_1$ such that $\EM\simeq \fun_\EC(\EM_1,\EM_1)$ and $\EN\simeq \fun_{\ED^\rev}(\EN_1,\EN_1)$ as UMFC's. Applying Rem.\,\ref{rem:red-genus} and Thm.\,\ref{thm:higher-g-defects}, we see that $$
\int_{(\Rb\times S^1;\Gamma_1)} (\EC,\ED; \EK,\EL,\EM,\EN;(\EP,p),(\EQ,q)) \simeq (\EM_1\boxtimes \EN_1^\op, \,\, u),
$$ where $u$ is determined by $\EK,\EL,(\EP,p),(\EQ,q)$.

An interesting special case is when $\EM=\EC$, $\EN=\ED$, $\EK=\EL=\EP=\EQ$ and $p=q=\one_\EK$. This case is equivalent to a cylinder $(\Rb\times S^1; S^1)$ with two 2-cells labeled by $\EC$ and $\ED$, a unique 1-cell given by an non-contractible loop $S^1$ labeled by $\EK$ and no 0-cell. The $\EC$-$\ED$-bimodule structure on $\EK$ induces two monoidal functors $\EC \xrightarrow{L} \EK \xleftarrow{R} \ED$.
\begin{prop} \label{prop:K}
We have
\be \label{eq:loop-K}
\int_{(\Rb\times S^1; S^1)} (\EC,\ED; \EK;\emptyset) \simeq \left(\EC\boxtimes \ED, \,\, \oplus_{i\in \mO(\ED)}L^\vee(R(i))\boxtimes i^\ast \right) \simeq \left(\EC\boxtimes \ED, \,\,(L\boxtimes R)^\vee(\one_\EK) \right).
\ee
\void{
If $A$ is an algebra in $\overline{\EC}\boxtimes \ED$ such that there is an equivalence $f: \EK \xrightarrow{\simeq} (\overline{\EC}\boxtimes \ED)_A$ of UMFC's and a natural isomorphism $f\circ (L\boxtimes R)\simeq-\otimes A: \overline{\EC}\boxtimes \ED \to (\overline{\EC}\boxtimes \ED)_A$, then we have
\be
\int_{(\Rb\times S^1; S^1)} (\EC, \ED; \EK;\emptyset) \simeq (\EC\boxtimes \ED, \, A).
\ee
}
\end{prop}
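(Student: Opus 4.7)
The plan is to apply the pushforward property along the projection $\Rb\times S^1\to S^1$ to reduce the computation to an $S^1$-factorization homology, and then identify the result using the closed $\EC$-$\ED$-bimodule structure on $\EK$. The main obstacle will be carefully tracking the canonical 0-disk algebra basepoint through the chain of equivalences identifying $\int_{S^1}\EK$ with $Z(\EK)$.

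First I would project along $\pi:\Rb\times S^1\to S^1$ onto the second factor. For any arc $U\subset S^1$, the preimage $\pi^{-1}(U)=\Rb\times U$ is a stratified 2-disk as in Fig.\,\ref{fig:labeled-disk}\,(b), and by Example\,\ref{expl:fh-labeled-disk}\,(2) its factorization homology is the 1-disk algebra $\EK$. Thus $\pi_\ast A\simeq\EK$, and combined with Eq.\,(\ref{eq:hh-A}) the pushforward property gives
$$
\int_{(\Rb\times S^1;\,S^1)}(\EC,\ED;\EK;\emptyset)\;\simeq\;\int_{S^1}\EK\;\simeq\;\EK\boxtimes_{\EK\boxtimes\EK^\rev}\EK.
$$

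Next I would identify the right-hand side with $Z(\EK)$ as unitary categories. The rigidity of $\EK$ provides an equivalence $\EK\simeq\EK^\op$ of $\EK\boxtimes\EK^\rev$-modules via the duality functor $\delta$, after which Eq.\,(\ref{eq:xRy}) identifies $\EK\boxtimes_{\EK\boxtimes\EK^\rev}\EK$ with $\fun_{\EK\boxtimes\EK^\rev}(\EK,\EK)=Z(\EK)$. Since $\EK$ is closed, $\phi_\EK:\overline{\EC}\boxtimes\ED\to Z(\EK)$ is a braided monoidal equivalence, so as unitary categories $Z(\EK)\simeq\overline{\EC}\boxtimes\ED\simeq\EC\boxtimes\ED$. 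This produces the first factor in Eq.\,(\ref{eq:loop-K}).

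For the distinguished object, the canonical basepoint of $\int_{S^1}\EK$ is $\one_\EK\boxtimes_{\EK\boxtimes\EK^\rev}\one_\EK$, whose image under the above equivalences is $\phi^\vee(\one_\EK)\in Z(\EK)$, where $\phi:Z(\EK)\to\EK$ is the forgetful functor and $\phi^\vee$ its right adjoint. Under $\phi_\EK^{-1}$ the composition $\phi\circ\phi_\EK:\overline{\EC}\boxtimes\ED\to\EK$ sends $c\boxtimes d$ to $L(c)\otimes R(d)$, so the decomposition into simples reads
$$
\phi^\vee(\one_\EK)\;=\;\bigoplus_{c\in\mO(\EC),\,d\in\mO(\ED)}\dim\hom_\EK\bigl(L(c)\otimes R(d),\,\one_\EK\bigr)\cdot(c\boxtimes d).
$$
Using the rigidity of $\EK$ and the adjunction $L\dashv L^\vee$, $\hom_\EK(L(c)\otimes R(d),\one_\EK)\simeq\hom_\EK(L(c),R(d^\ast))\simeq\hom_\EC(c,L^\vee R(d^\ast))$. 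Collecting the $c$-sum and substituting $i=d^\ast$ gives $\bigoplus_{i\in\mO(\ED)}L^\vee(R(i))\boxtimes i^\ast$, the first claimed form. The second form $(L\boxtimes R)^\vee(\one_\EK)$ is immediate from the same hom computation and the definition of the right adjoint of the monoidal functor $L\boxtimes R:\EC\boxtimes\ED\to\EK$.
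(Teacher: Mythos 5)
Your proposal is correct, but it takes a genuinely different route from the paper. The paper never pushes forward along the circle direction: it applies Lem.\,\ref{lem:M-cylinder} and Rem.\,\ref{rem:red-genus} to the half-cylinder labeled by $\ED$, replacing it by two punctured disks with 0-cells simultaneously labeled by $\oplus_i(\ED,i)\boxtimes(\ED,i^\ast)$, and then invokes the already-established 2-disk computation Prop.\,\ref{prop:2disk}; this yields the simple-by-simple form $\oplus_{i\in\mO(\ED)}L^\vee(R(i))\boxtimes i^\ast$ first, and the form $(L\boxtimes R)^\vee(\one_\EK)$ is then verified by exactly the hom-space chain you write at the end. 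You instead integrate along the fibers of $\Rb\times S^1\to S^1$ to get $\int_{S^1}\EK\simeq\EK\boxtimes_{\EK\boxtimes\EK^\rev}\EK$ via Eq.\,(\ref{eq:hh-A}), identify this Hochschild object with $Z(\EK)$ using the duality $\EK\simeq\EK^\op$ and Eq.\,(\ref{eq:xRy}), and then use closedness $\phi_\EK:\overline{\EC}\boxtimes\ED\xrightarrow{\simeq}Z(\EK)$ to land in $\EC\boxtimes\ED$, obtaining $(L\boxtimes R)^\vee(\one_\EK)$ first and the simple decomposition second. Your route is more conceptual: it exhibits the distinguished object directly as the canonical Lagrangian-type object $\phi^\vee(\one_\EK)$ transported along $\phi_\EK^{-1}$, and it bypasses the genus-reduction machinery and Prop.\,\ref{prop:2disk}; the paper's route buys economy (it reuses Lem.\,\ref{lem:M-cylinder} and Prop.\,\ref{prop:2disk}, which are needed elsewhere) and matches the anyon-tunneling picture invoked later when comparing with the GSD formulas. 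The one thin spot in your write-up is the assertion that the basepoint $\one_\EK\boxtimes_{\EK\boxtimes\EK^\rev}\one_\EK$ is carried to $\phi^\vee(\one_\EK)$: this is true but deserves a line, e.g.\ under Eq.\,(\ref{eq:xRy}) the image is the bimodule functor $[-,\one_\EK]^\ast\odot\one_\EK$, whose value at $\one_\EK$ is $\oplus_{x\in\mO(\EK)}x\otimes x^\ast$, which together with its tautological half-braiding is exactly the image of $\one_\EK$ under the right adjoint of the forgetful functor $Z(\EK)\to\EK$; this is a fillable step of the same size as those the paper itself leaves implicit (compare the treatment of $\id_\EC\mapsto\oplus_i i\boxtimes i$ in the proof of Thm.\,\ref{thm:high-genus}).
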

\pf
Applying Lem.\,\ref{lem:M-cylinder} and Rem.\,\ref{rem:red-genus} to the cylinder labeled by $\ED$, then applying Prop.\,\ref{prop:2disk}, we obtain
$$
\int_{(\Rb\times S^1; S^1)} (\EC,\ED; \EK;\emptyset) \simeq \left(\EC\boxtimes \ED^\op, \,\, \oplus_{i\in \mO(\ED)}L^\vee(R(i))\boxtimes i \right).
$$
Using the equivalence $\delta: \ED^\op \to \ED$, we obtain the first equivalence of Eq.\,(\ref{eq:loop-K}) immediately. 
The second equivalence follows from the following identities:
\begin{align}
&\hom_{\EC\boxtimes \ED}( j\boxtimes k,\,\,\,\, \oplus_{i\in \mO(\ED)}L^\vee(R(i))\boxtimes i^\ast) \simeq \hom_\EC(j,L^\vee(R(i))) \otimes \hom_\ED(k,i^\ast)
\simeq \hom_\EC(L(j), R(k^\ast))  \nn
&\hspace{1cm} \simeq \hom_\EK(L(j)\otimes R(k), \one_\EK) \simeq \hom_{\EC\boxtimes \ED} \left( j\boxtimes k, (L\boxtimes R)^\vee(\one_\EK) \right),
\nonumber
\end{align}
for all $j\in \mO(\EC)$, $k\in \mO(\ED)$.
\epf

\begin{rem}
The formula Eq.\,(\ref{eq:loop-K}) suggests a very useful tool to compute factorization homology of a stratified surface (open or closed), in which there is a cylinder shaped region with a single non-contractible loop labeled by a closed fusion $\EC$-$\ED$-bimodule $\EK$. One can always replace this region by two disjoint open 2-disks with 2-cells labeled by $\EC$ and $\ED$ and two 0-cells (on different 2-disks) simultaneously labeled by $\oplus_{i\in \mO(\ED)}L^\vee(R(i))\boxtimes i^\ast$. This replacement reduces the genus of the surface by one.
\end{rem}

\begin{cor}
If a stratified sphere $(S^2; S^1)$ with two 2-cells labeled by UMTC's $\EC$ and $\ED$ and the unique 1-cell (a loop $S^1$) labeled by a closed multi-fusion $\EC$-$\ED$-bimodule $\EK$, then we have.
\be \label{eq:cdm-A}
\int_{(S^2; S^1)} (\EC, \ED; \EK;\emptyset) \simeq (\bk, \hom_{\EC\boxtimes \ED}(\one_\EC \boxtimes \one_\ED, \,\, A)),
\ee
where $A=(L\boxtimes R)^\vee(\one_\EK)$ is a commutative separable algebra in $\overline{\EC}\boxtimes\ED$.
\end{cor}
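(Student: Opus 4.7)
The plan is to reduce the sphere computation to the cylinder case already handled by Prop.\,\ref{prop:K}, using iterated $\otimes$-excision and the same bimodule simplification that appeared in the proof of Thm.\,\ref{thm:genus-0}.

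First, I will split the stratified sphere $(S^2;S^1)$ along two parallel circles --- one just above and one just below the equatorial $\EK$-loop --- into three pieces: an upper cap (a 2-disk purely in the $\EC$-region), a middle open cylinder carrying the full coefficient $(\EC,\ED;\EK;\emptyset)$, and a lower cap (a 2-disk purely in the $\ED$-region). Iterated $\otimes$-excision then yields
\[
\int_{(S^2;S^1)}(\EC,\ED;\EK;\emptyset) \simeq \EC \otimes_{\fun_\bk(\EC,\EC)} \left(\int_{(\Rb \times S^1; S^1)}(\EC,\ED;\EK;\emptyset)\right) \otimes_{\fun_\bk(\ED,\ED)} \ED,
\]
where the gluing algebras are identified as $\int_{S^1 \times \Rb}(\EC) \simeq \fun_\bk(\EC,\EC)$ and $\int_{S^1 \times \Rb}(\ED) \simeq \fun_\bk(\ED,\ED)$, exactly as in the proof of Thm.\,\ref{thm:genus-0}. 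The two caps contribute $\EC$ and $\ED$, and Prop.\,\ref{prop:K} identifies the middle factor with $(\EC \boxtimes \ED, A)$ for $A = (L\boxtimes R)^\vee(\one_\EK)$.

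Next, I will evaluate the two outer tensor products by invoking the invertibility of $\EC$ as a $\bk$-$\fun_\bk(\EC,\EC)^\rev$-bimodule (and $\ED$ as a $\bk$-$\fun_\bk(\ED,\ED)^\rev$-bimodule). Since the $\fun_\bk(\EC,\EC)$-action on $\EC \boxtimes \ED$ is through the $\EC$-factor alone, one has $\EC \otimes_{\fun_\bk(\EC,\EC)}(\EC \boxtimes \ED) \simeq \ED$ via the same formula as in Eq.\,(\ref{eq:2-sphere-pf}): writing $A = \bigoplus_i a_i \boxtimes b_i$ with $a_i$, $b_i$ simple, the distinguished object is sent to $\bigoplus_i \hom_\EC(\one_\EC, a_i) \boxtimes b_i$. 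The analogous reduction on the $\ED$ side then produces $\bigoplus_i \hom_\EC(\one_\EC, a_i) \otimes_\Cb \hom_\ED(\one_\ED, b_i)$, which is precisely $\hom_{\EC\boxtimes\ED}(\one_\EC \boxtimes \one_\ED, A)$, yielding the claimed formula.

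For the auxiliary assertion that $A$ is a commutative separable algebra in $\overline{\EC} \boxtimes \ED$, note that $L \boxtimes R : \overline{\EC} \boxtimes \ED \to \EK$ factors as $U \circ \phi_\EK$, where $\phi_\EK : \overline{\EC} \boxtimes \ED \xrightarrow{\simeq} Z(\EK)$ is the braided monoidal equivalence arising from the closed bimodule structure on $\EK$ and $U : Z(\EK) \to \EK$ is the forgetful functor. Hence $A \simeq \phi_\EK^{-1}\bigl(U^\vee(\one_\EK)\bigr)$, and $U^\vee(\one_\EK) = \bigoplus_{k \in \mO(\EK)} k \otimes k^\ast$ is the canonical commutative separable algebra in $Z(\EK)$; transport along the braided equivalence preserves both properties. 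The only substantive work is the bookkeeping of the distinguished object through the two cap-absorptions, and this is not a real obstacle, since every ingredient is either Prop.\,\ref{prop:K} or a direct reuse of the argument in the proof of Thm.\,\ref{thm:genus-0}.
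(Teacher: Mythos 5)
Your proposal is correct and takes essentially the approach the paper intends for this corollary, which it states without proof: compute the annular neighborhood of the $\EK$-loop by Prop.\,\ref{prop:K} and absorb the two caps exactly as in the proof of Thm.\,\ref{thm:genus-0} (the remark following Prop.\,\ref{prop:K} packages your two cap-absorptions as replacing the annulus by two disks with 0-cells simultaneously labeled by $A$, after which Thm.\,\ref{thm:genus-0} applied to the two resulting spheres gives $\hom_{\EC\boxtimes\ED}(\one_\EC\boxtimes\one_\ED,\,A)$ directly). The only point you assert rather than check---that the equivalence of Prop.\,\ref{prop:K} intertwines the boundary-circle actions, so that $\fun_\bk(\EC,\EC)$ and $\fun_\bk(\ED,\ED)$ act through the respective factors of $\EC\boxtimes\ED$---is true and at the paper's own level of rigor, and your identification $(L\boxtimes R)^\vee(\one_\EK)\simeq\phi_\EK^{-1}\bigl(U^\vee(\one_\EK)\bigr)$ with the image of the canonical Lagrangian algebra under the braided equivalence is a correct justification of the commutativity and separability of $A$, which the paper asserts without argument.
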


\begin{rem}
Eq.\,(\ref{eq:cdm-A}) exactly gives the GSD of a stratified sphere $(S^2; S^1)$ with two 2d phases given by UMTC's $\EC$ and $\ED$, respectively, and bounded by an anomaly-free gapped domain wall $\EK$ \cite{lww}. The wall $\EK$ is called {\it stable} if and only if the GSD on the stratified sphere $(S^2; S^1)$ is trivial \cite{lww,kong-wen-zheng}. In other words, the wall $\EK$ is stable iff $\hom_{\EC\boxtimes \ED}(\one_\EC \boxtimes \one_\ED, A)\simeq\Cb$, or equivalently, iff $\EK$ is a unitary fusion category. If $\EK$ is not fusion, $\dim \hom_{\EC\boxtimes\ED}(\one_\EC \boxtimes \one_\ED,A) = \dim \hom_\EK(\one_\EK,\one_\EK) > 1$. In this case, the wall $\EK$ is not stable under the perturbation of local operators. These perturbations can let $\EK$ flow to a stable wall described by a unitary fusion category. An example of such situation is given in Example\,\ref{expl:toric-2}, where $\EK$ must flow to the trivial 1d phase $\bk$. In general, unitary fusion categories in the same Morita class are not unique. Therefore, it becomes an interesting problem to work out to which unitary fusion category $\EK$ can flow.
\end{rem}

Let $\EC,\ED,\EE$ be UMTC's, $\EM$ a closed fusion $\EC$-$\ED$-bimodule and $\EN$ a closed fusion $\ED$-$\EE$-bimodule.
We have canonical monoidal functors $\EC\xrightarrow{L_\EM} \EM \xleftarrow{R_\EM} \ED$, $\ED\xrightarrow{L_\EN} \EN \xleftarrow{R_\EN} \EE$, and $\EC \xrightarrow{L} (\EM\boxtimes_\ED\EN) \xleftarrow{R} \EE$.
It is very easy to determine $(L\boxtimes R)^\vee(\one_{\EM\boxtimes_\ED\EN})$ as an object in $\overline{\EC}\boxtimes \EE$ via factorization homology as follows.
\void{
by replacing a cylinder with a unique 2-cell labeled by $\ED$ by two open disks with 2-cell labeled by $\ED$ and $\ED$ and two 0-cell simultaneously labeled by $\oplus_{i\in \mO(\ED)}i\boxtimes i^\ast$. More precisely, we have
$$
(L\boxtimes R)^\vee(\one_{\EM\boxtimes_\ED\EN}) \simeq \oplus_{i\in \mO(\ED)} L_\EM^\vee(R_\EM(i)) \boxtimes R_\EN^\vee(L_\EN(i^\ast)).
$$
One can reexpress $(L\boxtimes R)^\vee(\one_{\EM\boxtimes_\ED\EN})$ in terms of more concrete data. 
}
We define the $W$-matrix $W^\EM$ for $\EM$ as:
\be \label{eq:W-matrix}
(L_\EM\boxtimes R_\EM)^\vee(\one_\EM) = \bigoplus_{i\in \mO(\EC), j\in \mO(\ED)} W_{ij}^\EM \,\,\, i\boxtimes j^\ast
\ee
and define $W^\EN$ for $\EN$ similarly. Then we have
$$
(L\boxtimes R)^\vee(\one_{\EM\boxtimes_\ED\EN}) \simeq \bigoplus_{i\in \mO(\EC), j\in \mO(\EE)} (W^\EM W^\EN)_{ij}\,\, i\boxtimes j^\ast.
$$

As a consequence, we obtain the following result.

\begin{cor}
Let $(\Rb\times S^1; S^1\cup\cdots\cup S^1)$ be a stratified open cylinder with 2-cells labeled by UMTC's $\EC_0,\cdots,\EC_n$ and 1-cells ($n$ disconnected non-contractible loops) labeled by closed multi-fusion $\EC_{i-1}$-$\EC_i$-bimodules $\EM_i$, $i=1,\cdots, n$ and no 0-cell. Let the matrix $W^{\EM_i}$, $i=1,\cdots,n$ be defined by Eq.\,(\ref{eq:W-matrix}). We have
$$
\int_{(\Rb\times S^1; S^1\cup\cdots\cup S^1)} (\EC_0, \cdots, \EC_n; \EM_1,\cdots, \EM_n;\emptyset) \simeq \left(\EC_0\boxtimes \EC_n, \,\,  \bigoplus_{i\in \mO(\EC_0), j\in \mO(\EC_n)} (W^{\EM_1}W^{\EM_2} \cdots W^{\EM_n})_{ij} \,\, i\boxtimes j^\ast \right).
$$
If we glue two ends of the cylinder to get a stratified torus $(S^1\times S^1; S^1\cup \cdots \cup S^1)$, we have
$$
\int_{(S^1\times S^1; S^1\cup\cdots\cup S^1)} (\EC_0, \cdots, \EC_{n-1}; \EM_1,\cdots, \EM_n;\emptyset) \simeq (\bk, \,\,  u)
$$
where $\dim u = \mathrm{Tr}(W^{\EM_1}W^{\EM_2} \cdots W^{\EM_n})$.
\end{cor}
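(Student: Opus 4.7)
My plan is to prove the open cylinder formula by induction on the number $n$ of loops and then deduce the torus formula by closing up the cylinder.

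For the base case $n=1$, the claim is exactly Proposition~\ref{prop:K} combined with the definition (\ref{eq:W-matrix}) of $W^{\EM_1}$. For the inductive step, I would apply the $\otimes$-excision property along a slice $\{t\}\times S^1$ strictly between the $(n{-}1)$-th and $n$-th loops. This presents the $n$-loop cylinder as a collar-gluing of an $(n{-}1)$-loop open cylinder (with labels $\EC_0,\dots,\EC_{n-1},\EM_1,\dots,\EM_{n-1}$) and a $1$-loop open cylinder (with labels $\EC_{n-1},\EC_n,\EM_n$), glued along an unstratified $\EC_{n-1}$-labeled open cylinder whose factorization homology, by the proof of Theorem~\ref{thm:high-genus}, is $\fun_\bk(\EC_{n-1},\EC_{n-1})$. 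Invoking the inductive hypothesis and Proposition~\ref{prop:K}, the $\otimes$-excision property yields
\begin{equation*}
\int_{\text{$n$-loop cyl.}}\!\!\!A \;\simeq\; \bigl(\EC_0\boxtimes\EC_{n-1},\, M\bigr) \,\boxtimes_{\fun_\bk(\EC_{n-1},\EC_{n-1})}\, \bigl(\EC_{n-1}\boxtimes\EC_n,\, N\bigr),
\end{equation*}
with $M=\bigoplus_{i,j}(W^{\EM_1}\cdots W^{\EM_{n-1}})_{ij}\, i\boxtimes j^\ast$ and $N=\bigoplus_{k,l}W^{\EM_n}_{kl}\, k\boxtimes l^\ast$. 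By Corollary~\ref{cor:tensor-fun} and Theorem~\ref{thm:dual-mod-cat}, $\EC_{n-1}$ is an invertible $\bk$-$\fun_\bk(\EC_{n-1},\EC_{n-1})^\rev$-bimodule, so $\EC_{n-1}\boxtimes_{\fun_\bk(\EC_{n-1},\EC_{n-1})}\EC_{n-1}\simeq\bk$ via $j^\ast\boxtimes k\mapsto\hom_{\EC_{n-1}}(j,k)$, analogously to Eq.~(\ref{eq:2-sphere-pf}). Since $\hom_{\EC_{n-1}}(j,k)\simeq\delta_{jk}\Cb$ on simple objects, the resulting distinguished object is $\bigoplus_{i,l}(W^{\EM_1}\cdots W^{\EM_n})_{il}\, i\boxtimes l^\ast$, completing the induction.

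For the torus, I would project $S^1\times S^1\to S^1$ onto the first factor; by the pushforward property, the problem reduces to the factorization homology of a stratified $S^1$ whose coefficient system has $\fun_\bk(\EC_i,\EC_i)$ on the $n$ arcs and $0$-disk algebras $(\EC_{i-1}\boxtimes\EC_i,\, \bigoplus_{k,l}W^{\EM_i}_{kl}\,k\boxtimes l^\ast)$ at the $n$ marked points (by Proposition~\ref{prop:K}). Applying $\otimes$-excision around the circle, this is the Hochschild-type trace of the open-cylinder bimodule over $\fun_\bk(\EC_0,\EC_0)$. Theorem~\ref{thm:higher-g-defects} already guarantees the result lies in $\bk$, so only $\dim u$ is at stake; under the identification $\fun_\bk(\EC_0,\EC_0)\simeq\EC_0\boxtimes\EC_0^\op$ from Eq.~(\ref{eq:xRy}), the open-cylinder object $\bigoplus_{i,l}(W^{\EM_1}\cdots W^{\EM_n})_{il}\,i\boxtimes l^\ast$ corresponds to an endofunctor of $\EC_0$ whose categorical trace has dimension $\sum_j(W^{\EM_1}\cdots W^{\EM_n})_{jj}=\mathrm{Tr}(W^{\EM_1}\cdots W^{\EM_n})$.

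The main obstacle is the careful bookkeeping of the $\fun_\bk(\EC_i,\EC_i)$-bimodule structures on $\EC_{i-1}\boxtimes\EC_i$ and the verification that the tensor product $\boxtimes_{\fun_\bk(\EC_i,\EC_i)}$ effects exactly matrix multiplication by contracting paired $\EC_i$-factors via $\delta_{jk}$; this is a categorical incarnation of the identity $\mathrm{Tr}(AB)=\sum A_{ij}B_{ji}$, executed inside $\EV_{\mathrm{uty}}$ using Eq.~(\ref{eq:MMNN}) and the semisimpleness of each $\EC_i$.
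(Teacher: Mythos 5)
Your proposal is correct, and it reaches the same matrix-multiplication mechanism as the paper, but it is organized differently. The paper states this corollary as a consequence of two prior ingredients: (i) the walls are first fused by $\otimes$-excision into a single closed multi-fusion $\EC_0$-$\EC_n$-bimodule $\EM_1\boxtimes_{\EC_1}\cdots\boxtimes_{\EC_{n-1}}\EM_n$ (anomaly-freeness being preserved by Thm.\,\ref{thm:closed}), to which Prop.\,\ref{prop:K} is applied once, and (ii) the multiplicativity of the $W$-matrix under wall fusion, $W^{\EM\boxtimes_\ED\EN}=W^{\EM}W^{\EN}$, stated just before the corollary; for the torus the paper then uses the genus-reduction replacement of Rem.\,\ref{rem:red-genus} (two $2$-disks with $0$-cells simultaneously labeled by $\oplus_{i,j}W_{ij}\,i\boxtimes j^\ast$) followed by Thm.\,\ref{thm:genus-0} and Rem.\,\ref{rem:0-cell-label}, which yields $\oplus_{i,j}W_{ij}\hom_{\EC_0}(\one,i\otimes j^\ast)$ and hence $\dim u=\mathrm{Tr}(W)$. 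You instead keep the $n$ walls and induct, gluing along an unstratified $\EC_{n-1}$-annulus and contracting $\EC_{n-1}\boxtimes_{\fun_\bk(\EC_{n-1},\EC_{n-1})}\EC_{n-1}\simeq\bk$ via the pairing of Eq.\,(\ref{eq:2-sphere-pf}); this is exactly the computation underlying the paper's $W$-matrix multiplicativity, so your induction in effect reproves that formula rather than quoting it, which is a self-contained and arguably more explicit route. For the torus, your pushforward-to-$S^1$/Hochschild-trace argument, combined with Thm.\,\ref{thm:higher-g-defects} to know the answer lives in $\bk$, is a legitimate alternative to the paper's genus-reduction-plus-sphere computation; just note that the paper never formalizes factorization homology of stratified $1$-manifolds, so the honest justification of that step is the $\otimes$-excision presentation of the torus as the cylinder self-glued along two plain annuli (the ``Hochschild-type trace'' you mention), not the stratified-circle language per se. The remaining care point you flag --- matching the right/left $\fun_\bk(\EC_i,\EC_i)$-module structures via the duality $\EC_i\simeq\EC_i^\op$ as in the proof of Thm.\,\ref{thm:genus-0}, so that the contraction pairs simples by $\delta_{jk}$ --- is genuine but harmless, since it only affects the placement of duals and not the resulting multiplicities.
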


\begin{rem}
The formula $\mathrm{Tr}(W^{\EM_1}W^{\EM_2} \cdots W^{\EM_n})$ first appeared in the work of Lan-Wang-Wen in \cite{lww} as a formula for the GSD for the same stratified torus with the same target labels $\EC_i$ and $\EM_j$. It is tautological to check that other formulas for GSD appeared in \cite{lww} also match precisely with the result of factorization homology. This is because the calculation of GSD in \cite{lww} makes use of the way that anyons tunnel through the wall on the stratified cylinder from Prop.\,\ref{prop:K}, and this tunneling process matches precisely with the result of factorization homology given in Prop.\,\ref{prop:K}.
\end{rem}

\small


\begin{thebibliography}{KWak2}

\bibitem[AF]{af}
D.~Ayala, J.~Francis,
{\it Factorization homology of topological manifolds},
J. Topol. 8 (2015), no. 4, 1045-1084, [arXiv:1206.5522]

\bibitem[AFT1]{aft-1}
D.~Ayala, J.~Francis, H.L.~Tanaka,
{\it Local Structures on stratified spaces}, [arXiv:1409.0501]

\bibitem[AFT2]{aft-2}
D.~Ayala, J.~Francis, H.L.~Tanaka,
{\it Factorization homology of stratified spaces}, [arXiv:1409.0848]

\bibitem[AFR]{afr}
D.~Ayala, J.~Francis, N.~Rozenblyum,
{\it Factorization homology I: higher categories}, [arXiv:1504.04007]

\bibitem[BBJ1]{bbj1}
D.~Ben-Zvi, A.~Brochier, D.~Jordan,
{\it Integrating quantum groups over surfaces},
[arXiv:1501.04652]

\bibitem[BBJ2]{bbj2}
D.~Ben-Zvi, A.~Brochier, D.~Jordan,
{\it Quantum character varieties and braided module categories}, [arXiv:1606.04769]. 

\bibitem[BD]{bd}
A.~Beilinson, V.~Drinfeld,
{\it Chiral algebras}, American Mathematical Society Colloquium Publications,
51. American Mathematical Society, Providence, RI, 2004.

\bibitem[BK]{bk}
S.B.~Bravyi, A.Y.~Kitaev,
{\it Quantum codes on a lattice with boundary}. [arXiv:quant-ph/9811052]


\bibitem[De]{deligne}
P. Deligne,
{\it Cat\'{e}gories tensorielles}, Mosc. Math. J. 2 (2002), no. 2, 227-248.


\bibitem[DMNO]{dmno}
A.~Davydov, M.~M\"{u}ger, D.~Nikshych, V.~Ostrik,
{\it The Witt group of nondegenerate braided fusion categories},
 J. Reine Angew. Math. 677 (2013), 135-177 [arXiv:1009.2117]

\bibitem[DSS]{dss}
C.~L.~Douglas, C.~Schommer-Pries, N.~Snyder,
{\it The balanced tensor product of module categories}, [arXiv:1406.4204].


\bibitem[EGNO]{egno}
P.~Etingof, S.~Gelaki, D.~Nikshych, V.~Ostrik,
{\it Tensor categories}, Mathematical Surveys and monographs, vol. 205 (2015).


\bibitem[ENO02]{eno2002}
P. Etingof, D. Nikshych, V. Ostrik,
{\it On fusion categories},
Annals of Mathematics {\bf 162} (2005), 581-642.


\bibitem[ENO09]{eno2009}
P.I.~Etingof, D.\ Nikshych, V.\ Ostrik,
{\it Fusion categories and homotopy theory}, Quantum Topology \textbf{1} (2010) 209-273 [arXiv:0909.3140].

\bibitem[EO]{eo}
P.I.~Etingof, V.\ Ostrik,
{\it Finite tensor categories},
Mosc. Math. J. 4 (2004), no. 3, 627-654, 782-783 [arXiv:math/0301027]

\bibitem[Fra]{francis}
J.~Francis, 
{\it The tangent complex and Hochschild cohomology of rings}, 
Compositio Mathematica (2013) 149 (3) 430-480. 


\bibitem[Fre]{freed}
D.S.~Freed, private communication. 

\bibitem[FT]{ft}
D.S.~Freed, ``4-3-2-8-7-6''. Aspects of topology conference talk slides. Available at \url{https://www.ma.utexas.edu/users/dafr/Aspects.pdf}, December 2012.


\bibitem[Fres]{fresse}
B.~Fresse,
{\it Homotopy of Operads \& Grothendieck-Teichmüller Groups}, Chapter 6, a book to appear, available at \url{http://math.univ-lille1.fr/~fresse/OperadHomotopyBook/ParenthesizedOperads.pdf}

\bibitem[FS]{fs}
J.~Fuchs, C.~Schweigert, 
{\it A note on permutation twist defects in topological bilayer phases}, 
Lett. Math. Phys. Volume 104, Issue 11, (2014) 1385-1405 [arXiv:1310.1329]


\bibitem[FSS]{fss}
J.~Fuchs, G.~Schaumann, C.~Schweigert, 
{\it A trace for bimodule categories},
Appl. Categor. Struct. (2016) 1-42 [arXiv:1412.6968]


\bibitem[Gai]{gaiotto}
D.~Gaiotto, private communication. 

\bibitem[Gal]{galindo}
C.~Galindo,
{\it On braided and ribbon unitary fusion categories}, Canad. Math. Bull. 57 (2014),
no. 3, 506-510, [arXiv:1209.2022]

\bibitem[Gi]{ginot}
G.~Ginot,
{\it Notes on factorization algebras, factorization homology and applications}, [arXiv:1307.5213]

\bibitem[GHR]{ghr}
C. Galindo, S.-M. Hong, E. Rowell,
{\it Generalized and quasi-localizations of braided group representations},
Int. Math. Res. Not. IMRN 2013, no. 3, 693-731, [arXiv:1105.5048]

\bibitem[HSW]{hsw}
Y.~Hu, S.D.~Stirling, Y.-S. Wu,
{\it Ground State Degeneracy in the Levin-Wen Model for Topological Phases}, 
Phys. Rev. B 85, 075107 [arXiv:1105.5771]


\bibitem[HW]{hw}
L.-Y.~Hung, Y.~Wan,
{\it Ground State Degeneracy of Topological Phases on Open Surfaces},
Phys. Rev. Lett. 114, 076401 (2015). [arXiv:1408.0014]

\bibitem[Ka]{kawahigashi}
Y.~Kawahigashi,
{\it A relative tensor product of subfactors over a modular tensor category}, 
[arXiv:1612.03549]

\bibitem[Ki]{kitaev}
A.Y.~Kitaev,
{\it Anyons in an exactly solved model and beyond},
Ann. Phys. 321 (2006) 2-111. [arXiv:cond-mat/0506438]


\bibitem[KK]{kk}
A.~Kitaev, L.~Kong,
{\it Models for gapped boundaries and domain walls}, Commun. Math. Phys.
313 (2012) 351-373, [arXiv:1104.5047].

\bibitem[Ko]{anyon}
L.~Kong,
{\it Anyon condensation and tensor categories}, Nucl. Phys. B 886 (2014) 436-482 [arXiv:1307.8244]


\bibitem[KW]{kong-wen}
L. Kong, X.-G. Wen,
{\it Braided fusion categories, gravitational anomalies and the mathematical
framework for topological orders in any dimensions}, [arXiv:1405.5858]

\bibitem[KWZ]{kong-wen-zheng}
L.~Kong, X.-G.~Wen, H.~Zheng,
{\it Boundary-bulk relation for topological orders as the functor mapping higher categories to their centers}, [arXiv:1502.01690]

\bibitem[KZ]{kong-zheng}
L.~Kong, H.~Zheng,
{\it The center functor is fully faithful}, [arXiv:1507.00503]

\bibitem[LWW]{lww}
T.~Lan, J.C.~Wang, X.G.~Wen,
{\it Gapped Domain Walls, Gapped Boundaries and Topological Degeneracy},
Phys. Rev. Lett. 114, 076402 (2015) [arXiv:1408.6514]

\bibitem[Le]{levin}
M.~A.~Levin, 
{\it Protected edge modes without symmetry}, Phys. Rev. X 3, 021009 [arXiv:1301.7355]


\bibitem[LW]{lw-mod}
M.~A.~Levin, X.-G.~Wen,
{\it String-net condensation: A physical mechanism for topological phases}, Phys. Rev. B 71 (2005) 045110, [arXiv:cond-mat/0404617].


\bibitem[Lu1]{lurie0}
J.~Lurie, {\it Higher topos}, Annals of Mathematics Studies, No. 170. (2009)

\bibitem[Lu2]{lurie-tft}
J.~Lurie, {\it On the classification of topological field theories}, 
Current developments in mathematics, 2008, 129-280, Int. Press, Somerville, MA, 2009. [arXiv:0905.0465]

\bibitem[Lu3]{lurie}
J.~Lurie, {\it Higher algebras}, a book available at:
\url{http://www.math.harvard.edu/~lurie/papers/higheralgebra.pdf}.


\bibitem[Mu]{mueger2}
M.~M\"{u}ger,
{\it From Subfactors to Categories and Topology II: The quantum double of tensor categories and subfactors},
J. Pure Appl. Algebra 180 (2003), no. 1-2, 159-219.


\bibitem[NTW]{ntw}
Q.~Niu, D.J. Thouless, Y.-S.~Wu,
{\it Quantized Hall conductance as a topological invariant}, Phys. Rev. B 31, 3372 (1985)

\bibitem[NW]{nw}
Q.~Niu, X.-G.~Wen, 
{\it Ground-state degeneracy of the fractional quantum Hall states in the presence of a random potential and on high-genus Riemann surfaces}, Phys. Rev. B 41, 9377 (1990)


\bibitem[O]{ostrik}
V.~Ostrik,
{\it Module categories, weak Hopf algebras and modular invariants},
Transform. Groups 8 (2003) 177-206 [arXiv:math/0111139]

\bibitem[RT]{rt}
N.~Reshetikhin, V.~G.~Turaev,
{\it Invariants of 3-manifolds via link polynomials of quantum groups}, Invent. Math. 103(3), 547-597 (1991)

\bibitem[SW]{sw}
P.~Salvatore, N.~Wahl,
{\it Framed discs operads and Batalin-Vilkoviksy algebras},
Quart. J. Math. 54 (2003), 213-231.

\bibitem[S]{schommer-pries}
C.~J.~Schommer-Pries, 
{\it Tori detect invertibility of topological field theories}, [arXiv:1511.01772]


\bibitem[Ta]{tam}
D.~Tambara,
{\it A duality for modules over monoidal categories of representations
of semisimple Hopf algebras}, J. Algebra 241 (2001), 515-547.

\bibitem[TW]{tw}
R. Tao, Y.-S. Wu, 
{\it Gauge invariance and fractional quantum Hall effect},  Physical Review B30 1097 (1984). 

\bibitem[Tu]{turaev}
V.~G.~Turaev, 
{\it Quantum Invariants of Knots and 3-Manifolds}, de Gruyter, NewYork, 1994.

\bibitem[W]{wahl}
N.~Wahl, 
{\it Ribbon Braids and related operads}, Ph.D thesis, University of Oxford,
\url{http://eprints.maths.ox.ac.uk/43/1/wahl.pdf}
\end{thebibliography}
\end{document}